\newtheorem{Theorem}{Theorem}[section]
\newtheorem{Lemma}[Theorem]{Lemma}
\newtheorem{Remark}[Theorem]{Remark}
\newtheorem{thm}{Theorem}[section]
\newtheorem{cor}[thm]{Corollary}
\newtheorem{lem}[thm]{Lemma}
\theoremstyle{definition}
\newtheorem{defn}[thm]{Definition}
\theoremstyle{remark}
\newtheorem{example}[thm]{Example}
\title[Limiting Carleman Weights and CTA Manifolds]
{Limiting Carleman Weights and Conformally Transversally Anisotropic Manifolds}
\def\R{\mathbb{R}}
\def\2L{\Lambda_{\tilde{\gamma}}}
\def\1L{\Lambda_{\gamma}}
\DeclareMathOperator{\grad}{grad}
\newcommand{\lieg}{\mathfrak{g}}
\renewcommand{\leq}{\leqslant}
\renewcommand{\geq}{\geqslant}
\DeclareMathOperator{\Ric}{Ric}
\newcommand{\spann}[1]{\mathrm{span}{#1}}
\def \dim{{\mbox {dim}}\,}
\newcommand{\mR}{\mathbb{R}}
\newcommand{\abs}[1]{\lvert #1 \rvert}
\begin{document}

\author{Pablo Angulo}
\address{Department of Mathematics,
ETS de Ingenieros Navales,
Universidad Polit\'ecnica de Madrid}
\curraddr{}
\email{pablo.angulo@upm.es}

\author{Daniel Faraco}
\address{ Department of Mathematics, Universidad Aut\'onoma de Madrid, and ICMAT CSIC-UAM-UCM-UC3M}
\curraddr{}
\email{daniel.faraco@uam.es}

\author{Luis Guijarro}
\address{ Department of Mathematics, Universidad Aut\'onoma de Madrid, and ICMAT CSIC-UAM-UCM-UC3M}
\curraddr{}
\email{luis.guijarro@uam.es}

\author[M. Salo]{Mikko Salo}
\address{University of Jyvaskyla, Department of Mathematics and Statistics, PO Box 35, 40014 University of Jyvaskyla, Finland}
\email{mikko.j.salo@jyu.fi}

\thanks{The first three authors were supported by research grants  MTM2014-57769-1-P, MTM2014-57769-3-P and
MTM2017-85934-C3
from the Ministerio de Ciencia e Innovaci\'on (MCINN), by ICMAT Severo Ochoa projects SEV-2011-0087 and  SEV-2015-0554 (MINECO), and by the ERC 301179. The fourth author was supported by the Academy of Finland (grants 284715 and 309963) and by ERC under Horizon 2020 (ERC CoG 770924).
}

\begin{abstract}
We analyze the structure of the set of limiting Carleman weights in all conformally flat manifolds, $3$-manifolds, and $4$-manifolds. In particular
we give a new proof of the classification of Euclidean limiting Carleman weights, and show that there are only three basic such weights up to the action of the conformal group. In dimension three we show that if the manifold is not conformally flat, there could be one or two limiting Carleman weights.
We also characterize the metrics that have more than one limiting Carleman weight.
In dimension four we obtain a complete spectrum of examples according to the structure of the Weyl tensor. In particular, we construct unimodular Lie groups whose Weyl or Cotton-York tensors have the symmetries of conformally transversally anisotropic manifolds, but which do not admit limiting Carleman weights.
\end{abstract}


\maketitle
\pagestyle{myheadings}
\markleft{P.ANGULO, D. FARACO, L. GUIJARRO,  AND  M.SALO}

\section{Introduction}

\subsection{Background}
A version of the  inverse problem of Calder\'on \cite{Calderon} asks for the determination
 of a potential $q$ from boundary measurements (given by the Dirichlet-to-Neumann map $\Lambda_q$) for the Schr\"odinger operator $-\Delta_g+q$ in a compact Riemannian manifold $(M,g)$ with boundary. There is an extensive literature for the case where $(M,g)$ is a domain in Euclidean space (see the survey \cite{Uhlmann_survey}). The corresponding problem for a compact manifold $(M,g)$ has been solved in two dimensions \cite{GuillarmouTzou}, or when the underlying structures are real-analytic (see \cite{LeeUhlmann, LassasUhlmann, GuillarmouSaBarreto, LassasLiimatainenSalo}). The problem remains open in general when $\dim(M) \geq 3$.
 
There has been progress in the above problem when $(M,g)$ belongs to the class of \emph{conformally transversally anisotropic} (CTA) manifolds . A manifold is said to be CTA if
there exists a conformal factor $c(x)$ and an $(n-1)$-manifold
$(M_0,g_0)$ such that $(M,g)$ is isometric to a domain in $\mathbb{R} \times M_0$ with the metric $c(x)(dx_1^2 \oplus g_0)$. In this article will always assume that 
\[
n = \dim(M) \geq 3.
\]
If $(M,g)$ is a CTA manifold, it turns out that one can construct complex geometrical optics solutions for the equation $(-\Delta_g + q)u = 0$ in $M$ as in the classical approach of \cite{SylvesterUhlmann} in the Euclidean case. This is based on the fact that the function $\varphi(x)=x_1$ is a so called limiting Carleman weight (LCW), see \cite{KSU, DKSaU}. It was proved in \cite{DKSaU} that the existence of an LCW is locally equivalent to the manifold begin CTA. Moreover, if $(M,g)$ is a CTA manifold, one can solve the inverse problem of determining the potential $q$ from boundary measurements if additionally 
\begin{itemize}
\item $(M_0,g_0)$ is simple \cite{DKSaU};
\item $(M_0,g_0)$ has injective geodesic X-ray transform \cite{DKLS}; or
\item $(M,g)$ is conformal to a subdomain of $\mR^2 \times M_0$, instead of just $\mR \times M_0$ (this follows by combining \cite{DKLS} with \cite[Theorem 1.3]{Salo_normalform}).
\end{itemize}

Unfortunately, due to the conformal invariance it is not always easy to decide whether a manifold is CTA. In \cite{AFGR,AFG} conformal symmetries were exploited to investigate whether  a manifold is CTA or not. In particular, \cite{AFGR} gave necessary conditions for a manifold to be CTA by proving that the Weyl or Cotton tensor of a CTA manifold needs to have certain specific algebraic structure. This work also gave the first explicit examples of manifolds (e.g.\ $\mathbb{C} P^2$, $\mathrm{Nil}$, $\widetilde{\mathrm{SL_2(\mR)}}$) that do not admit LCWs; it had been shown earlier that generic manifolds do not admit LCWs \cite{LiimatainenSalo, An}.  The article \cite{AFG} complemented \cite{AFGR} by giving several sufficient conditions for a manifold to admit an LCW in dimensions $3$ and $4$.



The works \cite{AFGR, AFG} were concerned with necessary and sufficient conditions for a manifold to admit at least one LCW. The first goal of the present article is to analyze and classify the set of all possible LCWs in a given Riemannian manifold. In particular we prove that having several LCWs imposes very strong symmetries in the manifold. The second aim is to further clarify the difference between the necessary conditions in \cite{AFGR} and the presence of an actual LCW. 

For the first goal we start by revisiting the case where LCWs are most abundant, i.e.\ Euclidean space.

\subsection{LCWs in Euclidean space}

LCWs in a domain $\Omega \subset \mR^n$, $n \geq 3$, were characterized in \cite[Theorem 1.3]{DKSaU} where it was proved that any LCW in $\Omega$ belongs to one of six different families up to translation and scaling. The proof was based on two facts:
\begin{itemize} 
\item the level sets of an LCW are umbilical hypersurfaces; and 
\item any umbilical hypersurface in $\mR^n$ is part of a hyperplane or sphere.
\end{itemize}
The result then followed by an ODE analysis of the parameters that define the level sets.

We begin by giving a new proof of the classification of LCWs in $\mR^n$. Instead of using the fact that the level sets of LCWs are umbilical hypersurfaces, we will start from the observation (also made in \cite{DKSaU}) that any LCW $\varphi$ has an associated conformal Killing vector field $\abs{\nabla \varphi}^{-2} \varphi$. Thus one could try to classify LCWs by first classifying all conformal Killing vector fields, and then checking which conformal Killing vector fields give rise to LCWs.

It is well known (and recalled in Lemma \ref{lemma_ck_euclidean}) that in $\mR^n$ with $n \geq 3$, any conformal Killing vector field is of the form 
\[
X(x) = (\alpha \cdot x)x - \frac{1}{2} \alpha \abs{x}^2 + cx + Bx + \gamma
\]
for some $\alpha, \gamma \in \mR^n$, $c \in \mR$, and some skew-symmetric matrix $B$. We will write $X = (\alpha, c, B, \gamma)$ for short. Using this characterization, the fact that $d(|X|^{-2} X) = 0$ for $X$ arising from an LCW, and arguments based on the conformal invariance of the problem, we obtain the following restatement of \cite[Theorem 1.3]{DKSaU}.

\begin{thm} \label{thm_lcw_rn_classification} 
Let $\Omega \subset \mR^n$, $n \geq 3$, be a connected open set, and let $\varphi$ be an LCW in $\Omega$. Then $\varphi(x) = a \psi(x-x_0) + b$ for some $a \in \mR \setminus \{0\}, b\in\mR$ and $x_0 \in \mR^n$, where $\psi$ is one of the following six functions and $X = |\nabla \psi|^{-2} \nabla \psi$ is the conformal Killing vector field for $\psi$:
\[
\begin{array}{ll}
\psi(x) = \gamma \cdot x, & X = (0, 0, 0 , \gamma), \\[7pt]
\psi(x) = \log |x|, & X = (0, 1, 0, 0), \\[7pt]
\psi(x) = \mathrm{arc\,tan} \,\frac{\gamma \cdot x}{\sigma \cdot x}, & X = (0, 0, \gamma \wedge \sigma, 0), \\[7pt]
\psi(x) =- 2 \frac{\gamma \cdot x}{|x|^2}, & X = (\gamma, 0, 0, 0), \\[7pt]
\psi(x) =  \frac{1}{\sqrt{s}} \,\mathrm{arctan}\,\frac{-2 \gamma \cdot x/\sqrt{s}}{|x|^2/s - 1}, & X = (\gamma, 0, 0, \frac{s}{2} \gamma), \\[7pt]
\psi(x) = \frac{1}{\sqrt{s}} \,\mathrm{arctanh}\,\frac{-2\gamma \cdot x/\sqrt{s}}{|x|^2/s + 1}, & X = (\gamma, 0, 0, -\frac{s}{2} \gamma). \end{array}
\]
Above $\gamma, \,\sigma \in \mR^n$ satisfy $|\gamma| = |\sigma| = 1$ and $\gamma \cdot \sigma = 0$, and $s > 0$.
\end{thm}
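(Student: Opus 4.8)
The plan is to recast the problem entirely in terms of the associated conformal Killing field and a single closedness equation, and then to carry out a conformally normalized case analysis. As recalled just before the statement, if $\varphi$ is an LCW then $X=\abs{\nabla\varphi}^{-2}\nabla\varphi$ is a conformal Killing field, and since $\abs{X}^2=\abs{\nabla\varphi}^{-2}$ one has $\abs{X}^{-2}X^\flat=d\varphi$; conversely any nonvanishing conformal Killing field $X$ for which the $1$-form $\abs{X}^{-2}X^\flat$ is closed integrates to a local potential $\varphi$ that is an LCW. So I would first reduce the theorem to the algebraic problem of classifying, up to the conformal group, those fields $X=(\alpha,c,B,\gamma)$ with $d(\abs{X}^{-2}X^\flat)=0$, and then recovering $\varphi$ by integrating $\nabla\varphi=\abs{X}^{-2}X$. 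Clearing denominators, the closedness condition is equivalent to the master identity
\[
\abs{X}^2\,dX^\flat = d(\abs{X}^2)\wedge X^\flat ,
\]
from which, wedging with $X^\flat$, I would extract the weaker Frobenius condition $dX^\flat\wedge X^\flat=0$; this says precisely that the distribution $X^\perp$ is integrable, its leaves being the umbilical level sets used in the original proof of \cite{DKSaU}.

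A short computation with the explicit form of $X$ gives $dX^\flat = 2(\alpha^\flat\wedge x^\flat - B)$, so the exterior derivative of $X^\flat$ sees only $\alpha$ and $B$, while $c$ and $\gamma$ enter only through $X^\flat$ and $\abs{X}^2$. The main structural tool is conformal invariance: the Möbius group acts on the set of LCWs, hence on the fields $X$ by pushforward, and the operations permitted in the statement (translation $x\mapsto x-x_0$ and scaling) form a subgroup, while an inversion interchanges the special conformal part $\alpha$ and the translation part $\gamma$. I would use this freedom to rotate and scale so that $\alpha,\gamma$ align with coordinate axes, to translate away $c$-generated terms, and to invert so as to reduce the case $\alpha\neq 0$ to a more symmetric position. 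Conceptually the orbits fall into the three types of elements of the conformal algebra (elliptic/rotational, hyperbolic/dilational, parabolic/translational), which is why the abstract records three basic weights; each conformal orbit meets the standard-coordinate representatives twice — as a field and as its inversion — producing the six families.

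With the normalization in place I would run the case analysis driven by the master identity, organized by $\alpha$ and $B$. When $\alpha=B=0$ the field is affine, $X=cx+\gamma$, yielding the linear weight $\gamma\cdot x$ if $c=0$ and, after a translation absorbing $\gamma$, the dilational field $X=x$ with $\varphi=\log\abs{x}$ if $c\neq 0$ (families $1$ and $2$). When $\alpha=0$ but $B\neq 0$, matching coefficients should force $c=0$ and $B$ to be simple of rank $2$ with $\gamma$ in a compatible position, so that $B=\gamma\wedge\sigma$ and $\varphi=\arctan(\gamma\cdot x/\sigma\cdot x)$ (family $3$). When $\alpha\neq 0$, the same matching should force $B=0$, $c=0$, and the translation part parallel to $\alpha$; writing it as $\tfrac{\delta}{2}\alpha$, the three signs of $\delta$ give the pure special conformal weight $-2\gamma\cdot x/\abs{x}^2$ ($\delta=0$, family $4$), the $\arctan$ weight ($\delta>0$, family $5$), and the $\mathrm{arctanh}$ weight ($\delta<0$, family $6$). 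In each branch I would integrate $\nabla\varphi=\abs{X}^{-2}X$ directly; since the effective $\abs{X}^2$ are quadratic or quartic in a single variable, the primitives are exactly the logarithms, arctangents, inverse hyperbolic tangents, and rational functions listed.

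The hard part will be the couplings in the case $\alpha\neq 0$: showing that the master identity genuinely forces $B=0$ and $\gamma\parallel\alpha$, rather than permitting a mixed field that superposes a special conformal part with a rotation or a transverse translation. This is exactly where the full strength of $\abs{X}^2\,dX^\flat=d(\abs{X}^2)\wedge X^\flat$ is needed, not merely its Frobenius consequence, since $\abs{X}^2$ is now quartic and the off-diagonal contributions must be shown to cancel only for the claimed parameter values. I expect the conformal normalization to be essential precisely here, collapsing an otherwise formidable polynomial identity in all of $(\alpha,c,B,\gamma)$ to a low-dimensional matching that can be verified degree by degree.
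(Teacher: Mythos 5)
Your proposal follows essentially the same route as the paper's proof: reduce LCWs to conformal Killing fields $X=(\alpha,c,B,\gamma)$ with $d(|X|^{-2}X^{\flat})=0$ via the bijective correspondence, extract algebraic constraints on the parameters, and run a translation-normalized case analysis on $\alpha$, $c$, $B$, integrating each normalized field to one of the six explicit weights. The one point where you anticipate more difficulty than there is: the paper never needs the full identity $|X|^2\,dX^{\flat}=d(|X|^2)\wedge X^{\flat}$, since the weaker Frobenius consequence $dX\wedge X=0$ already yields the clean conditions $B\wedge\gamma=0$ and $cB=\alpha\wedge\gamma$ (Lemma~\ref{lemma_lcw_parameter_conditions}), which — combined with translations only (Lemma~\ref{lemma_conformal_group_action}(a), no rotations, dilations or inversions, which is essential because the six families are distinguished precisely up to translation and scaling) — settle the coupling in the case $\alpha\neq 0$ that you flag as the hard part.
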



The six families above are the same as in \cite{DKSaU}. However, we also prove that the last three families can be obtained from the first three by conformal mappings, thus reducing the number of basic LCWs to three:

\begin{thm}\label{thm_lcw_rn_orbits}
  The group of conformal transformations in $\mR^n$ acts on the set of LCWs by $\psi \mapsto F^\ast\psi=\psi\circ F$.
  This action, combined with the action $\psi \mapsto a\psi + b$ where $a \in \mR \setminus \{0\}$ and $b \in \mR$, has exactly three orbits, given by the following representatives ($e_1$ and $e_2$ are the first two vectors of the canonical basis):
  \[
  \begin{array}{l}
  \psi_1(x) = e_1 \cdot x\\[7pt]
  \psi_2(x) = \log |x|\\[7pt]
  \psi_3(x) = \mathrm{arc\,tan} \,\frac{e_1 \cdot x}{e_2 \cdot x}.
  \end{array}
  \]
  In other words, for any LCW $\varphi$ defined in an open set $\Omega\subset\mR^n$, there is exactly one $i\in\{1,2,3\}$, a conformal transformation $F$ defined on the one point compactification of $\mR^n$, and two real numbers $a, b$ with $a \neq 0$, such that $\varphi$ is the restriction of $a\left( \psi_i\circ F\right) + b$ to $\Omega$.

  If we only consider affine conformal mappings (without the inversion), there are exactly six orbits, corresponding to the six families in Theorem \ref{thm_lcw_rn_classification}.
\end{thm}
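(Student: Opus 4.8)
The plan is to organize everything around the conformal Killing field $X_\psi = |\nabla\psi|^{-2}\nabla\psi$ attached to each weight in Theorem \ref{thm_lcw_rn_classification}, exploiting that the two group actions on LCWs are intertwined with natural actions on these fields. First I would check that the action is well defined, i.e.\ that $\psi\circ F$ is again an LCW whenever $\psi$ is and $F$ is conformal. This is the conformal invariance of the LCW condition: a conformal $F$ is an isometry from $(\Omega, F^\ast g)$ onto its image, where $g$ is the Euclidean metric and $F^\ast g$ is conformal to $g$; since the LCW property is preserved both under isometric pullback and under conformal scaling of the metric (as established in \cite{DKSaU}), $\psi\circ F$ is an LCW. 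The affine action $\psi\mapsto a\psi+b$ is visibly admissible. I would then record the effect on $X_\psi$: from $\nabla(a\psi+b)=a\nabla\psi$ one gets $X_{a\psi+b}=a^{-1}X_\psi$, while writing $DF=\lambda\, O$ with $O$ orthogonal gives $X_{\psi\circ F}=F^{-1}_\ast X_\psi$ exactly. Hence orbits of LCWs under both actions correspond to orbits of the fields $X_\psi$ under the conformal group, taken up to a nonzero scalar.

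Second, I would collapse the six families to three by exhibiting explicit conformal maps, the inversion $F(x)=x/|x|^2$ being the decisive one. A direct computation gives $\psi_4\circ F(x)=-2\,\gamma\cdot x$, placing the fourth family in the orbit of the linear weight $\psi_1$. In the same way the inversion, composed with a translation and a dilation, conjugates the fifth family into the $\arctan$ family $\psi_3$ and the sixth family into the $\log$ family $\psi_2$; at the level of fields this is the statement that $X=(\gamma,0,0,\tfrac{s}{2}\gamma)$ is conformally conjugate to a pure rotation generator and $X=(\gamma,0,0,-\tfrac{s}{2}\gamma)$ to the dilation generator. This already shows there are at most three orbits, represented by $\psi_1,\psi_2,\psi_3$.

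Third, and this is where the genuine content sits, I would show these three orbits are truly distinct and that dropping the inversion raises the count to exactly six. The clean invariant is the dynamical type of the flow generated by $X_\psi$ on the conformal sphere: the translation field is parabolic (nilpotent), the dilation field is hyperbolic, and the rotation field is elliptic, and these three types are preserved under conformal conjugation and rescaling, so $\psi_1,\psi_2,\psi_3$ are pairwise inequivalent, which also gives the uniqueness of $i$. For the affine statement one notes that affine conformal maps fix the point at infinity, so finer invariants survive: computing the zero sets of the six fields in $\mR^n$ one finds, among the parabolic fields, no finite zero (family $1$) versus a single zero at the origin (family $4$); among the hyperbolic fields, one finite zero (family $2$) versus two finite zeros $\pm\sqrt{s}\,\gamma$ (family $6$); and among the elliptic fields, a fixed plane of codimension two (family $3$) versus a bounded round sphere $S^{n-2}$ of radius $\sqrt{s}$ in $\gamma^{\perp}$ (family $5$). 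Since the affine group preserves the type, the number of finite zeros, and the distinction between an unbounded fixed plane and a bounded fixed sphere, the six families lie in distinct affine orbits; each is a single orbit because $O(n)$ acts transitively on orthonormal pairs $(\gamma,\sigma)$ and dilations sweep out the parameter $s$.

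I expect the main obstacle to be the bookkeeping in the second and third steps: carrying out the conjugations of the mixed fields $(\gamma,0,0,\pm\tfrac{s}{2}\gamma)$ onto the pure rotation and dilation generators, and pinning down the zero-set and dynamical-type invariants precisely enough to rule out any accidental coincidence of orbits. Both reduce to linear algebra in $\mathfrak{so}(n+1,1)$ together with the explicit zero-set analysis above, so they should be routine once the correspondence between LCW orbits and scaled conjugacy classes of $X_\psi$ is in place; the only point requiring real care is the \emph{exactly three} (respectively \emph{exactly six}) assertion, which rests on the invariance of the parabolic/hyperbolic/elliptic trichotomy and, in the affine case, on the supplementary invariants just described.
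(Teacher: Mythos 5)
Your proposal is correct, and although it shares the paper's backbone --- passing to the conformal Killing fields $X_\psi$ (Lemma \ref{lemma_lcw_ck}), using the intertwining relations of Lemma \ref{lemma_conformal_group_action}, and using the inversion to merge the six families of Theorem \ref{thm_lcw_rn_classification} into three --- it diverges genuinely at the step proving that the orbits are distinct. The paper invokes Liouville's theorem to write an arbitrary conformal map as an explicit composition of elementary ones, and then checks case by case, via the transformation rules of Lemma \ref{lemma_conformal_group_action}, that the field $(0,1,0,0)$ of $\log|x|$ can never be carried to a multiple of $(0,0,0,e_1)$ (the parameter $\tilde c=\pm 1$ obstructs it), with a similar computation for the $\arctan$ weight. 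You instead use conjugation-invariant data: the parabolic/hyperbolic/elliptic type of $X_\psi$ as an element of $\mathfrak{so}(n+1,1)$ (equivalently the fixed-point set of its flow on the sphere) separates the three conformal orbits, while the zero sets in $\mR^n$ (empty, one point, two points, a codimension-two plane, an $(n-2)$-sphere) refine this for the affine case. This buys two things: it replaces the paper's exhaustive computation by a structural argument, and it explicitly settles a point the paper leaves implicit --- that the two families within each column (e.g.\ $e_1\cdot x$ and $-2\frac{e_1\cdot x}{|x|^2}$) are inequivalent under \emph{affine} conformal maps, since the paper's computation only separates the three full conformal orbits. Two details you should spell out when writing this up: (i) Liouville's theorem is still needed in your framework, both to identify local conformal maps on $\Omega$ with restrictions of M\"obius transformations and to propagate the local identity $X_\varphi=a^{-1}F^\ast X_{\psi_i}$ from $\Omega$ to the whole sphere (both sides being rational, agreement on an open set gives global agreement), so that conjugacy in $\mathfrak{so}(n+1,1)$ is meaningful; (ii) the families $\log|x|$ and $-2\frac{e_1\cdot x}{|x|^2}$ both have exactly one finite zero, so the affine count of six cannot rest on zero sets alone and needs the type invariant on top, exactly as your two-level grouping provides.
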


\subsection{LCWs on general manifolds}
Next, we turn our attention to general manifolds. We say that LCWs $\varphi_1, \ldots, \varphi_m$ are \emph{orthogonal} if their gradients are orthogonal at each point. The next theorem describes the underlying geometries and shows that orthogonal LCWs can be chosen as coordinates.


\begin{thm}
\label{thm:diagonal_metric}
Let $(M,g)$ be an $n$-dimensional  Riemannian manifold. Let $p\in M$, and suppose that in an open neighbourhood of $p$, the conformal class $[g]$ admits $m$ different orthogonal LCWs  $\varphi_1,\dots,\varphi_m$. Then there are local coordinates $\Phi=(z_1,\dots, z_n)$ near $p$ such that
\begin{enumerate}
\item $z_1=\varphi_1,\dots, z_m=\varphi_m$;
\item some conformal multiple of $g$ has the local expression
\[
\left[\begin{array}{ccccc}
1 & 0 & \dots & 0  & 0\\
0 & f_2(z_{m+1},\dots,z_n) & \dots & 0 & 0\\
 &  & \vdots &  & \\
0 & 0 & \dots & f_m(z_{m+1},\dots,z_n) & 0\\
0 & 0 & \dots & 0 & D(z_{m+1},\dots,z_n)\\
\end{array}\right]
\]
where $D$ is an $(n-m)\times (n-m)$ symmetric matrix.
\end{enumerate}
Conversely, if in some local coordinates $(z_1, \ldots, z_n)$ the metric has the above form up to a conformal multiple, then $z_1, \ldots, z_m$ are orthogonal LCWs.
\end{thm}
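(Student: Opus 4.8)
The plan is to reduce everything to the characterization of LCWs recalled above: their level sets are umbilical, and for a single weight there is a conformal gauge in which the gradient becomes parallel (the CTA product form of \cite{DKSaU}). Since the LCW property depends only on $[g]$, I would fix once and for all the gauge $\hat g := \abs{\nabla_g\varphi_1}^2\,g$, in which $\abs{\nabla_{\hat g}\varphi_1}_{\hat g}=1$ and $\mathrm{Hess}_{\hat g}\varphi_1=0$, so that $T:=\nabla_{\hat g}\varphi_1$ is a unit parallel field. All subsequent computations are carried out in this single gauge, and at the end $\hat g$ will be the required conformal multiple of $g$, with $z_1=\varphi_1$ automatically having coefficient $1$.

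The first key step is to upgrade umbilicality to the totally geodesic property for the remaining weights, i.e.\ $\mathrm{Hess}_{\hat g}\varphi_i=0$ on $(\nabla_{\hat g}\varphi_i)^\perp$ for $i\geq 2$. Umbilicality of the level sets (conformally invariant, hence valid in the gauge $\hat g$) gives $\mathrm{Hess}_{\hat g}\varphi_i|_{\perp}=\lambda_i\,\hat g|_{\perp}$ for some scalar $\lambda_i$. Evaluating on $T$, which is orthogonal to $\nabla_{\hat g}\varphi_i$ because the weights are orthogonal, and using that $T$ is parallel while $T\varphi_i=\langle\nabla_{\hat g}\varphi_i,\nabla_{\hat g}\varphi_1\rangle=0$, I get $\lambda_i\abs{T}^2=\mathrm{Hess}_{\hat g}\varphi_i(T,T)=T(T\varphi_i)=0$, so $\lambda_i=0$. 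I would then build the coordinates: set $V=\mathrm{span}(\nabla_{\hat g}\varphi_1,\dots,\nabla_{\hat g}\varphi_m)$ and $W=V^\perp=\bigcap_i\ker d\varphi_i$. Differentiating $\langle\nabla\varphi_i,\nabla\varphi_j\rangle=0$ and invoking the umbilical identity shows $[\nabla\varphi_i,\nabla\varphi_j]$ has no $W$-component, so $V$ is integrable; choosing first integrals $z_{m+1},\dots,z_n$ of $V$ and $z_i=\varphi_i$ for $i\le m$ gives a chart, since $d\varphi_1,\dots,d\varphi_m$ span $\mathrm{Ann}(W)$, $dz_{m+1},\dots,dz_n$ span $\mathrm{Ann}(V)$, and $V\oplus W=TM$. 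In this chart $\nabla_{\hat g}z_a\in W$ for $a>m$ and the $\nabla\varphi_i$ are mutually orthogonal, so $\hat g^{-1}$, and hence $\hat g$, is block diagonal with diagonal upper-left $m\times m$ block.

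The part I expect to be the real obstacle is showing that every surviving coefficient depends on $z'=(z_{m+1},\dots,z_n)$ alone. For the diagonal entries $\hat g_{ii}=\abs{\nabla_{\hat g}\varphi_i}^{-2}$ this reduces to $\nabla\varphi_k(\abs{\nabla\varphi_i}^2)=0$ for all $k\le m$: the case $k=i$ is exactly $\lambda_i=0$, while for $k\neq i$ I would differentiate $\langle\nabla\varphi_i,\nabla\varphi_k\rangle=0$ along $\nabla\varphi_k$ and use $\lambda_i=0$ again. As the metric is block diagonal, $\nabla\varphi_k\parallel\partial_{z_k}$, so these identities say precisely that $\hat g_{ii}$ is independent of $z_1,\dots,z_m$; moreover $\hat g_{11}=1$. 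For the transverse block $\hat g_{ab}$ ($a,b>m$) I would use that $W$ is totally geodesic: since $\partial_{z_a},\partial_{z_b}\in W\perp\nabla\varphi_k$, the vanishing $\lambda_k=0$ gives $\mathrm{Hess}_{\hat g}\varphi_k(\partial_{z_a},\partial_{z_b})=0$, and expanding $\partial_{z_k}=\hat g_{kk}\nabla\varphi_k$ inside $\partial_{z_k}\hat g_{ab}$ forces $\partial_{z_k}\hat g_{ab}=0$ for $k\le m$. This establishes the displayed block form and finishes the forward direction.

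The converse is a direct computation that I would do last. If $cg$ has the displayed form, then $\langle\nabla z_i,\nabla z_j\rangle=0$ for $i\neq j\le m$ gives orthogonality of the weights; and for each $i$, rescaling $cg$ by the factor $f_i(z')^{-1}$, which is a function of $z'$ only, makes the $z_i$-coefficient equal to $1$ while leaving every coefficient independent of $z_i$. In that gauge the relevant Christoffel symbols $\Gamma^i_{\alpha\beta}$ vanish, so $\mathrm{Hess}\,z_i=0$ and $z_i$ is an LCW. The whole argument hinges on the single observation that the parallel gradient of $\varphi_1$ forces the umbilical scalars $\lambda_i$ of the other weights to vanish; everything else is Frobenius integrability plus bookkeeping.
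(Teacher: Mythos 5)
Your strategy is attractive and much of it is sound, but there is one genuine gap, and it sits precisely at the step you identified as the ``real obstacle.'' You claim that $\nabla\varphi_i\bigl(\abs{\nabla\varphi_i}^2\bigr)=0$ because ``the case $k=i$ is exactly $\lambda_i=0$.'' It is not: $\nabla\varphi_i(\abs{\nabla\varphi_i}^2)=2\,\mathrm{Hess}_{\hat g}\varphi_i(\nabla\varphi_i,\nabla\varphi_i)$, and the umbilicality scalar $\lambda_i$ only controls the restriction of $\mathrm{Hess}_{\hat g}\varphi_i$ to $(\nabla\varphi_i)^\perp\times(\nabla\varphi_i)^\perp$; it says nothing about the component along the gradient direction. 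The deeper problem is that your opening sentence treats umbilicality of level sets as a \emph{characterization} of LCWs, when it is only a necessary condition (it is invariant under reparametrizations $\varphi\mapsto h\circ\varphi$, while the LCW property is not), and everything you actually use about $\varphi_i$ for $i\geq 2$ is umbilicality. This makes the gap unfixable from the facts you have established: on $\mR^3$ take $\hat g=dz_1^2+e^{2z_2}dz_2^2+dz_3^2$ (a flat metric in disguise, $u=e^{z_2}$), $\varphi_1=z_1$, $\varphi_2=z_2$. Then $\varphi_1$ is an LCW with unit parallel gradient, the gradients are orthogonal, the level sets of $\varphi_2$ are totally geodesic (so $\lambda_2=0$), and every identity you derive (integrability of $V$, block diagonality, the $k\neq i$ cases, $\partial_k D_{ab}=0$) holds --- yet $\hat g_{22}=e^{2z_2}$ depends on $z_2$ and indeed $\varphi_2=\log u$ is \emph{not} an LCW ($u\partial_u$ is not conformal Killing). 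So an argument using only what you use would prove a false statement; the full LCW hypothesis on each $\varphi_i$ must enter, and this is where.

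The repair is short but requires the conformal Killing characterization (Lemma \ref{lemma_lcw_dksau}) rather than umbilicality. Since $\varphi_i$ is an LCW for $\hat g$, writing $f=\abs{\nabla\varphi_i}^{-2}$ one has
\[
f\,\mathrm{Hess}_{\hat g}\varphi_i+\tfrac12\bigl(df\otimes d\varphi_i+d\varphi_i\otimes df\bigr)=\mu_i\,\hat g
\]
for some function $\mu_i$. Evaluating on $(T,T)$, where $d\varphi_i(T)=0$ and $\mathrm{Hess}_{\hat g}\varphi_i(T,T)=T(T\varphi_i)=0$, gives $\mu_i=0$ (this also recovers your $\lambda_i=0$); evaluating then on $(\nabla\varphi_i,\nabla\varphi_i)$ and using $\mathrm{Hess}_{\hat g}\varphi_i(\nabla\varphi_i,\nabla\varphi_i)=\tfrac12\nabla\varphi_i(1/f)=-\tfrac{1}{2f^2}df(\nabla\varphi_i)$ forces $df(\nabla\varphi_i)=0$, which is exactly the missing identity. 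With this substitution the rest of your proof --- the Frobenius construction of the chart, block diagonality, the $k\neq i$ cases, the transverse block, and the converse --- is correct, and it then constitutes a legitimate alternative to the paper's argument, which instead rescales by a separate conformal factor $f_j$ for \emph{each} weight so that every $\nabla^{f_jg}\varphi_j$ is parallel and compares the Levi-Civita connections of the metrics $f_jg$; your single-gauge route trades that connection bookkeeping for one pointwise use of the conformal Killing equation per weight.
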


The above Theorem has the following relevant corollary:

\begin{cor}
\label{cor:every orth LCW}
    An $n$-dimensional manifold with $n$ orthogonal LCWs is conformally flat.
\end{cor}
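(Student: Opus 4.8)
The plan is to apply Theorem \ref{thm:diagonal_metric} directly with $m = n$ and read off what the normal form says in this extremal case. Under the hypothesis, the conformal class $[g]$ admits $n$ orthogonal LCWs $\varphi_1, \dots, \varphi_n$ in a neighbourhood of any point $p$, so the theorem produces coordinates $(z_1, \dots, z_n) = (\varphi_1, \dots, \varphi_n)$ in which some conformal multiple $\tilde g$ of $g$ has the displayed block-diagonal form.

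The key observation is what happens to that form when $m = n$. The auxiliary block $D$ has size $(n-m)\times(n-m) = 0\times 0$ and therefore disappears, leaving a purely diagonal matrix with entries $1, f_2, \dots, f_n$. More importantly, each $f_i$ was only allowed to depend on the variables $z_{m+1}, \dots, z_n$; when $m = n$ this list of variables is empty, so every $f_i$ is forced to be a constant. (Positivity of the metric guarantees these constants are strictly positive.) Thus, in these coordinates,
\[
\tilde g = dz_1^2 + f_2\, dz_2^2 + \cdots + f_n\, dz_n^2, \qquad f_i > 0 \text{ constant}.
\]

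It then remains only to recognise that a constant-coefficient diagonal metric is flat: the linear change of coordinates $w_1 = z_1,\ w_i = \sqrt{f_i}\, z_i$ for $i \geq 2$ transforms $\tilde g$ into the Euclidean metric $\sum_{i=1}^n dw_i^2$. Hence $\tilde g$ is flat, and since $\tilde g$ is a conformal multiple of $g$, the metric $g$ is conformally flat in a neighbourhood of $p$. As $p$ was arbitrary, $(M,g)$ is conformally flat.

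I do not expect any genuine obstacle here, since the statement is an immediate specialisation of Theorem \ref{thm:diagonal_metric}; the only point that requires care is the bookkeeping that makes the functions $f_i$ constant, namely noticing that their allowed dependence on $z_{m+1}, \dots, z_n$ becomes vacuous precisely when $m = n$. Everything after that is the standard fact that constant-coefficient metrics are flat.
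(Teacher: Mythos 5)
Your proof is correct and is exactly the argument the paper intends: the corollary is stated as an immediate consequence of Theorem \ref{thm:diagonal_metric} with $m=n$, where the block $D$ vanishes and the functions $f_i$, having no remaining variables to depend on, become positive constants, so the conformal representative is flat after the linear rescaling $w_i=\sqrt{f_i}\,z_i$. Your write-up simply makes explicit the bookkeeping the paper leaves implicit, so there is nothing to add.
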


As a matter of fact, Theorem \ref{thm:diagonal_metric} in combination with the analysis of \cite{AFG, AFGR}  yields a complete understanding of $3$-manifolds. First we obtain the following theorem:
 
\begin{thm}
\label{thm:3-dim_case_several_LCWs}
Let $(M,g)$ be a $3$-manifold. Locally 
\begin{itemize}
\item[i)]  $(M,g)$ admits a limiting Carleman weight if and only if $(M,g)$ is conformal to $\R \times M_0$, where $M_0$ is a $2$-manifold;
\item[ii)]  $(M,g)$ admits two limiting Carleman weights with linearly independent gradients if and only if $(M,g)$ is conformal to $\R \times S$, where $S$ is a surface of revolution;
\item [iii)] $(M,g)$ admits three limiting Carleman weights with linearly independent gradients if and only if $(M,g)$ is conformally flat.
\end{itemize}
\end{thm}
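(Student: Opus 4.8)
The plan is to prove Theorem \ref{thm:3-dim_case_several_LCWs} by leveraging Theorem \ref{thm:diagonal_metric} and Corollary \ref{cor:every orth LCW}, together with the fact (from \cite{DKSaU}) that the existence of an LCW is locally equivalent to the manifold being CTA. Part (i) is essentially the local equivalence theorem of \cite{DKSaU}: an LCW exists locally if and only if $(M,g)$ is conformal to a domain in $\R \times M_0$ with metric $c(x)(dx_1^2 \oplus g_0)$, and in dimension $3$ the transversal manifold $M_0$ is a surface. So I would state this direction as a direct citation, with the CTA structure $c(x)(dx_1^2 \oplus g_0)$ giving the factorization through $\R \times M_0$.

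For part (ii), the forward direction should follow from Theorem \ref{thm:diagonal_metric} applied with $n=3$ and $m=2$. Two LCWs with linearly independent gradients need not be orthogonal a priori, so the first step is to reduce to the orthogonal case: I would argue that after a conformal change one can arrange the two weights to be orthogonal, or invoke the structure that two independent LCWs in a $3$-manifold force orthogonality of a suitable pair (this is where I would need to verify that linear independence can be upgraded to orthogonality, perhaps by a Gram--Schmidt type argument compatible with the LCW conditions). Granting orthogonality, Theorem \ref{thm:diagonal_metric} with $n=3$, $m=2$ gives coordinates $(z_1,z_2,z_3)$ in which a conformal multiple of $g$ is $\mathrm{diag}(1, f_2(z_3), D(z_3))$, where $D$ is a $1 \times 1$ block. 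Writing $z_1 = \varphi_1$ as the $\R$-direction, the remaining $2 \times 2$ part $\mathrm{diag}(f_2(z_3), D(z_3))$ depends only on $z_3$, which is exactly the metric of a surface of revolution (the coordinate $z_2 = \varphi_2$ playing the role of the rotational angle and $z_3$ the profile parameter). This identifies $(M,g)$ as conformal to $\R \times S$ with $S$ a surface of revolution. The converse is handled by the converse statement in Theorem \ref{thm:diagonal_metric}: on $\R \times S$ with $S$ a surface of revolution, the $\R$-coordinate and the rotation angle give two orthogonal (hence linearly independent) LCWs.

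For part (iii), the forward direction is exactly Corollary \ref{cor:every orth LCW} once orthogonality is secured: three LCWs with linearly independent gradients in a $3$-manifold, after reduction to the orthogonal case, give three orthogonal LCWs, hence $n=3$ orthogonal LCWs, so $(M,g)$ is conformally flat. The converse requires exhibiting three independent LCWs on any conformally flat $3$-manifold; since conformal flatness means $(M,g)$ is locally conformal to Euclidean space, I would pull back the three basic Euclidean LCWs $\psi_1, \psi_2, \psi_3$ from Theorem \ref{thm_lcw_rn_orbits} (or concretely the coordinate functions $x_1, x_2, x_3$, whose gradients are manifestly independent) and use the conformal invariance of the LCW property to produce three LCWs with linearly independent gradients.

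The main obstacle I anticipate is the reduction from linearly independent gradients to orthogonal gradients in parts (ii) and (iii): Theorem \ref{thm:diagonal_metric} is stated for \emph{orthogonal} LCWs, whereas the hypotheses here only assume linear independence. I expect this to require using the rigidity of LCWs on a $3$-manifold together with the specific algebraic structure of the Weyl/Cotton tensor established in \cite{AFGR, AFG}; in particular I would need to show that two (resp.\ three) independent LCWs constrain the geometry enough that an orthogonal frame of LCWs can be extracted, possibly after a conformal change. Verifying this compatibility, rather than the diagonalization itself, is the technically delicate step, and I would lean on the transversal surface structure from part (i) to carry it out.
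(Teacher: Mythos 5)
Your treatment of part (i) and of the two converse directions is fine and matches the paper, but the core of parts (ii) and (iii) --- the passage from \emph{linearly independent} gradients to a situation where Theorem \ref{thm:diagonal_metric} or Corollary \ref{cor:every orth LCW} applies --- is exactly the step you leave open, and the two fixes you sketch cannot work. A conformal change of metric cannot create orthogonality: since $\mathrm{grad}_{cg}\varphi = c^{-1}\,\mathrm{grad}_g\varphi$, rescaling the metric preserves the directions of the gradients and hence the angles between them. A Gram--Schmidt procedure fails as well, because linear combinations of LCWs are in general not LCWs. Worse, in part (iii) the statement ``the given three weights are (or can be made) orthogonal'' is simply false: in flat $\R^3$ the functions $x_1$, $x_2$, $x_1+x_2+x_3$ are three LCWs with linearly independent, non-orthogonal gradients. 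So one cannot hope to upgrade the hypothesis to orthogonality; one must argue around it, and producing a \emph{new} orthogonal triple of LCWs would already presuppose conformal flatness, which is circular.

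The missing idea is the eigenflag structure of the Cotton--York tensor, and it is used differently in the two parts. For (iii), the paper argues pointwise and purely algebraically: by Theorem \ref{Cotton2}, each LCW gradient at $p$ is an eigenflag direction, and in an orthonormal basis $\{e_1,e_2,e_3\}$ with $e_1$ an eigenflag, $CY_p$ has vanishing entries except in the first row and column, so some unit vector in $e_1^{\perp}$ lies in $\ker CY_p$. Three linearly independent eigenflag directions then force $\ker CY_p$ to be at least two-dimensional (the three kernel vectors cannot be collinear), and since $CY_p$ is trace-free this gives $CY_p=0$; hence $CY\equiv 0$ and $M$ is conformally flat --- no orthogonality is ever invoked. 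For (ii), orthogonality is not arranged but \emph{deduced}: if $M$ is not conformally flat, then $CY$ is non-null with $\det CY=0$, and \cite[Lemma 14]{AFG} (and its proof) shows there are exactly two eigenflag directions and that they are orthogonal; since the two gradients must line up with these, they are automatically orthogonal, and only then does Theorem \ref{thm:diagonal_metric} (with $n=3$, $m=2$) apply, followed by the coordinate change $(x,y,z)\mapsto(x,y,v(z))$ that normalizes the metric to $\mathrm{diag}(1,a(z),1)$, i.e.\ $\R\times S$ with $S$ a surface of revolution. The conformally flat case of (ii) is subsumed because flat $\R^3$ is $\R$ times the flat plane, a (trivial) surface of revolution, consistent with Example \ref{ex:3dim}. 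Without this Cotton--York input your outline does not close.
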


Note that if $\varphi_1$ and $\varphi_2$ are two LCWs with linearly independent gradients, then $\varphi_1$ is not of the form $a\varphi_2 +b$ for real numbers $a \neq 0, b$.

 For $3$-manifolds, the necessary condition from \cite{AFGR} for a manifold to be CTA
 is $\det(CY)=0$, where $CY$ is the Cotton-York tensor.  It was still open whether this  condition is sufficient.  Unimodular  Lie groups admit left invariant metrics which are easy to work with.
 Equipped with these metrics, they become good candidates for counterexamples as their curvature tensors, 
 and thus the eigenflag directions, are also left invariant.
 Lie brackets can be prescribed so that the left invariant metric has a Cotton tensor of the desired type, but the left invariant distributions spanned by the eigenflag directions are not integrable.
 It is possible to make these heuristics precise and  to find a specific example of a Lie group whose Cotton-York tensor is nowhere zero, has the symmetries that correspond to a CTA manifold, and yet the manifold is not CTA. This is the content of our next theorem.

\begin{thm}
\label{thm:3-dim-unimodular group}
There exists a unimodular three dimensional Lie group $G$ such that $det(CY)=0$ but $G$ is not CTA.
\end{thm}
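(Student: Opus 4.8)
The plan is to realize $G$ as a three-dimensional unimodular Lie group equipped with a left-invariant metric, exploiting the fact that for such metrics the Cotton--York tensor $\CY$ is itself left-invariant, so that all the relevant geometry is encoded in finitely many structure constants. Following Milnor, I would fix a left-invariant orthonormal frame $\{e_1,e_2,e_3\}$ in which the bracket takes the diagonal form $[e_2,e_3]=\lambda_1 e_1$, $[e_3,e_1]=\lambda_2 e_2$, $[e_1,e_2]=\lambda_3 e_3$; every real triple $(\lambda_1,\lambda_2,\lambda_3)$ satisfies the Jacobi identity and defines a unimodular Lie algebra, hence a simply connected group $G$. The goal is to choose the $\lambda_i$ so that $\det(\CY)=0$ and $\CY\neq 0$, yet $G$ admits no limiting Carleman weight; by Theorem~\ref{thm:3-dim_case_several_LCWs}(i) the latter is equivalent to $G$ not being locally conformal to any product $\R\times M_0$, and hence to $G$ not being CTA.

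The conceptual heart is a pointwise criterion for the gradient direction of an LCW. Suppose $G$ carried an LCW $\varphi$. By Theorem~\ref{thm:3-dim_case_several_LCWs}(i) some conformal multiple of $g$ splits as $dx_1^2\oplus g_0$ with $\varphi$ a reparametrization of $x_1$, so $N:=\grad\varphi/|\grad\varphi|$ points along $\partial_{x_1}$. A direct computation of $\CY$ for a product metric $dx_1^2\oplus g_0$ on $\R\times M_0$, with $g_0$ a surface metric of Gauss curvature $K$, shows that in the adapted frame $\CY$ has eigenvalues $\{0,\mu,-\mu\}$ with $\mu=\tfrac12|\grad K|$, that $\ker\CY$ is spanned by the $M_0$-tangent field $\grad K$, and crucially that $\CY(\partial_{x_1},\partial_{x_1})=0$ with $\partial_{x_1}\perp\ker\CY$. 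Since the Cotton tensor is conformally invariant in dimension three, the kernel, the null cone of the quadratic form $v\mapsto\CY(v,v)$, and orthogonality are all conformal invariants; building on the algebraic analysis of \cite{AFGR} I would record this as a lemma: the gradient direction of any LCW is null for $\CY$ and orthogonal to $\ker\CY$, and its orthogonal $2$-plane field is integrable, being tangent to the level sets of $\varphi$.

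With this criterion in hand the construction becomes algebraic. The involution $\sigma=\mathrm{diag}(1,-1,-1)$ is a Lie-algebra automorphism and an orthogonal map, hence lifts to an isometry of $(G,g)$, which must preserve $\CY$; invariance under $\sigma$ and its cyclic analogue forces $\CY$ to be diagonal in the frame $\{e_i\}$, say $\CY=\mathrm{diag}(c_1,c_2,c_3)$ with $c_1+c_2+c_3=0$. Using Milnor's formulas for the principal Ricci curvatures together with the Koszul connection coefficients one computes the $c_i$ as explicit cubic polynomials in the $\lambda_i$. I would then impose $c_1=0$, so that $\ker\CY=\R e_1$ and $\det\CY=0$, while keeping $c_2=-c_3\neq 0$, so that $\CY\neq 0$. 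For such a metric the only directions null for $\CY$ and orthogonal to $\ker\CY=\R e_1$ are $e_2\pm e_3$, and a one-line Frobenius check gives $[e_1,e_2\mp e_3]=\pm\lambda_2 e_2+\lambda_3 e_3$, so the plane field $(e_2\pm e_3)^\perp=\mathrm{span}(e_1,e_2\mp e_3)$ is integrable if and only if $\lambda_2+\lambda_3=0$. Choosing the $\lambda_i$ with $c_1=0$, $c_2\neq 0$ and $\lambda_2+\lambda_3\neq 0$ therefore yields a group with $\det(\CY)=0$, $\CY$ nowhere zero, but no admissible LCW direction; for instance $(\lambda_1,\lambda_2,\lambda_3)=(a,2,1)$ with $2a^3-3a^2-3=0$ lands on $SU(2)$ and, as one checks, gives $\CY=\mathrm{diag}(0,\mu,-\mu)$ with $\mu\neq 0$.

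The main obstacle is the compatibility of these constraints. It is not a priori clear that $\det(\CY)=0$ together with $\CY\neq 0$ can coexist with $\lambda_2+\lambda_3\neq 0$: in the degenerate families, for example $\lambda_1=0$, the equation $c_1=0$ collapses to exactly $\lambda_2+\lambda_3=0$, so the vanishing eigenvalue and the integrability of the eigenflag are forced to occur simultaneously and no counterexample arises. Thus the real work is to carry out the cubic computation of $\CY(\lambda)$ carefully and to locate an explicit triple in general position, on an actual unimodular group and away from the conformally flat locus $\CY=0$, at which the zero eigenvalue of $\CY$ is genuinely decoupled from the integrability of the associated null distribution. Verifying that the chosen triple indeed gives $c_2\neq 0$, equivalently that the metric is not conformally flat, completes the argument.
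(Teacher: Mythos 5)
Your proposal is correct and takes essentially the same route as the paper: a left-invariant metric on a $3$-dimensional unimodular Lie group in Milnor's diagonal frame, structure constants tuned so that $CY$ is diagonal with eigenvalues $\{0,\mu,-\mu\}$, $\mu\neq 0$, followed by exclusion of the two candidate gradient directions $e_2\pm e_3$ because their orthogonal planes $\mathrm{span}(e_1,e_2\mp e_3)$ fail the Frobenius condition whenever $\lambda_2+\lambda_3\neq 0$. The paper differs only in inessential details --- it cites \cite{AFGR} and \cite{AFG} for the eigenflag and integrability criteria rather than re-deriving them from the product computation, proves diagonality of $CY$ by direct computation rather than by your (clean) automorphism argument, and uses the explicit integer triple $(\lambda_1,\lambda_2,\lambda_3)=(6,-4,5)$ on $\SL$ instead of your $SU(2)$ example; for the record, your cubic is exactly right, since $CY_{11}=\tfrac{1}{2}(-2a^3+3a^2+3)$ for $(\lambda_1,\lambda_2,\lambda_3)=(a,2,1)$, and at its real root one checks $CY_{22}\neq 0$ and $\lambda_2+\lambda_3=3\neq 0$.
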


As discussed in Section \ref{sec6}, the analysis on three manifolds is now completed.  Let us turn to four manifolds. In dimension four  the conformal symmetries are described by the Weyl
 tensor. In \cite{AFGR} we found that the gradients of LCWs need to be related to so called "eigenflag" directions for the Weyl tensor.
 
\begin{defn}[\cite{AFGR}]\label{eigenflag}
 Let $W$ be a \emph{Weyl tensor} in $S^2(\Lambda^2V)$. We say that  $W$ satisfies the \emph{eigenflag condition} if and only if there is a nonzero vector $v\in V$ such that $W(v \wedge v^{\perp})\subset v \wedge v^{\perp}$, where by $v \wedge v^{\perp}$ we denote the set of bivectors
\[
\left\{\,v\wedge w\,:\,w\in V, \left\langle\,v,w\right\rangle =0\,\right\}.
\]
A one-dimensional subspace of $V$ is called an \emph{eigenflag direction} if it is spanned by some $v$ satisfying the above condition.

\end{defn}

In \cite{AFG} we classified the Weyl tensor in types A, B, C, D according to the number of eigenflag directions.
 
\begin{lem}\label{lem:Weyl}
 The algebraic Weyl operators $W$ in a vector space of dimension $4$ fall into one of the following types:

 \begin{description}
  \item[A] $W$ has no eigenflag directions.
  \item[B] $W$ has at least one eigenflag direction and three different eigenspaces of dimension $2$. In this case, $W$ has exactly four eigenflag directions.
  \item[C] $W$ has at least one eigenflag direction and two eigenspaces with dimensions four and two. In this case, the eigenflag directions for $W$ consist of the union of two orthogonal 2-planes.
  \item[D] $W$ is null. All directions are eigenflag.
 \end{description}
\end{lem}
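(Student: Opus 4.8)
The plan is to diagonalize the problem using the self-dual/anti-self-dual splitting of two-forms, which is available precisely because $\dim V = 4$. Fixing an orientation, the Hodge star is an involution on $\Lambda^2 V$, giving $\Lambda^2 V = \Lambda^+ \oplus \Lambda^-$ into its $\pm 1$ eigenspaces, each of dimension $3$. The algebraic symmetries of a Weyl operator (trace-freeness together with the first Bianchi identity) are equivalent, in this dimension, to the statements that $W$ commutes with $*$ and is trace-free; hence $W = W^+ \oplus W^-$ where $W^\pm \in S^2(\Lambda^\pm)$ are trace-free symmetric operators on the two $3$-dimensional spaces. The entire classification will be read off from the spectra of $W^+$ and $W^-$.

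First I would translate the eigenflag condition into these terms. For a unit vector $v$ the $3$-plane $P_v := v \wedge v^\perp = \ker(v^\flat\wedge\,\cdot\,) \subset \Lambda^2 V$ meets $\Lambda^\pm$ only in $0$, so the two projections restrict to isomorphisms $P_v \to \Lambda^\pm$; thus $P_v$ is the graph of an isometry $\phi_v : \Lambda^+ \to \Lambda^-$ (a short computation shows the two projections have equal norm). Since $W$ preserves $\Lambda^\pm$, one checks that $W(\xi + \phi_v\xi) = W^+\xi + W^-\phi_v\xi$ lies in $P_v$ for all $\xi$ iff $W^-\phi_v = \phi_v W^+$. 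Hence $[v]$ is an eigenflag direction iff $\phi_v$ intertwines $W^+$ and $W^-$, i.e. $W^- = \phi_v W^+\phi_v^{-1}$. The second input is the standard fact that $v \mapsto \phi_v$ descends to a bijection from $\mathbb{P}(V) \cong \mathbb{RP}^3$ onto one fixed connected component $\mathcal{C}$ of the two-component space $\mathrm{Isom}(\Lambda^+,\Lambda^-)$. Consequently the eigenflag directions are in bijection with $\{\phi \in \mathcal{C} : \phi W^+\phi^{-1} = W^-\}$, and since orthogonal conjugation preserves eigenvalues, a nonempty solution set forces $W^+$ and $W^-$ to be isospectral.

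The four types now emerge from the joint eigenvalue pattern of the trace-free operators $W^\pm$. If $W^+$ and $W^-$ are not isospectral there is no intertwiner, giving no eigenflag directions (type A). If $W^+ = W^- = 0$, equivalently $W$ is null, every $\phi \in \mathcal{C}$ works and all directions are eigenflag (type D). Otherwise $W^\pm$ share a common nonzero trace-free spectrum, which is either three distinct values or a double value $\{\lambda,\lambda,-2\lambda\}$. In the distinct case each operator has three orthogonal eigenlines, an intertwiner must match them, so $\phi u_i^+ = \epsilon_i u_i^-$ with signs $\epsilon_i = \pm 1$; exactly half of the $2^3$ sign choices land in $\mathcal{C}$, yielding precisely four intertwiners and hence exactly four eigenflag directions, while on $\Lambda^2 V$ the operator $W$ has three distinct eigenvalues each of multiplicity two (type B). In the double case $W$ has eigenvalues $\lambda$ (multiplicity four) and $-2\lambda$ (multiplicity two), and any intertwiner must send the simple eigenline $\ell^+ \subset \Lambda^+$ to $\ell^-$ and the eigenplane $E^+$ to $E^-$ but may act as an arbitrary isometry on $E^+$; this leaves a positive-dimensional family of admissible $\phi$ (type C).

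The main obstacle is the geometric identification in type C that this family of directions is exactly a union of two orthogonal $2$-planes in $V$. Here I would exploit that the self-dual unit vector spanning $\ell^+$ and the anti-self-dual one spanning $\ell^-$ define commuting orthogonal complex structures $J^+, J^-$ on $V$; their product $J^+J^-$ is a symmetric involution different from $\pm\mathrm{Id}$, and its two $(\pm1)$-eigenspaces form a pair of orthogonal $2$-planes. The remaining computation is to verify that $v$ yields an admissible intertwiner $\phi_v$ precisely when $v$ lies in one of these two planes, which can be carried out in an orthonormal frame adapted to $\ell^\pm$ by comparing $\phi_v W^+\phi_v^{-1}$ with $W^-$ directly. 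Finally I would confirm that the four cases are exhaustive and mutually exclusive, matching the eigenspace dimension data ($2,2,2$ for B; $4,2$ for C; all of $\Lambda^2 V$ for D) claimed in the statement.
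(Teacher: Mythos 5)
This paper never proves Lemma \ref{lem:Weyl}: it is recalled from the earlier work \cite{AFG}, so there is no internal proof here to compare against. Your proposal is correct, and it follows essentially the same route as the source: the splitting of a Weyl operator into trace-free blocks $W^{\pm}$ acting on $\Lambda^{\pm}$, the identification of $v \wedge v^{\perp}$ with the graph of an isometry $\phi_v \colon \Lambda^+ \to \Lambda^-$, the translation of the eigenflag condition into the intertwining relation $W^- \phi_v = \phi_v W^+$, the fact that $v \mapsto \phi_v$ descends to a bijection of $\mathbb{P}(V)$ onto one component $\mathcal{C}$ of the isometries, and the case analysis on the joint spectrum of $W^{\pm}$ (non-isospectral gives A, distinct common spectrum gives B with exactly four intertwiners in $\mathcal{C}$, repeated nonzero eigenvalue gives C, and the null case gives D). The eigenspace multiplicities $(2,2,2)$, $(4,2)$ claimed in the statement are read off exactly as you indicate.

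The only point where you sketch rather than prove is the type C assertion that the eigenflag set is a union of two orthogonal $2$-planes, and it is worth recording that your plan does close without difficulty. In an orthonormal frame adapted so that $\ell^{\pm} = \mathbb{R}\,\omega_1^{\pm}$ with $\omega_1^{\pm} = \tfrac{1}{\sqrt{2}}\left(e_0 \wedge e_1 \pm e_2 \wedge e_3\right)$, the two eigenplanes of $J^+J^-$ are $\mathrm{span}\{e_0,e_1\}$ and $\mathrm{span}\{e_2,e_3\}$. If $v$ lies in the first plane then $e_0 \wedge e_1 \in v \wedge v^{\perp}$, forcing $\phi_v(\omega_1^+) = \omega_1^-$; if $v$ lies in the second then $e_2 \wedge e_3 \in v \wedge v^{\perp}$, forcing $\phi_v(\omega_1^+) = -\omega_1^-$; in either case $\phi_v$ maps $\ell^+$ to $\ell^-$ and hence the eigenplane $E^+$ to $E^-$, so it intertwines and both planes consist of eigenflag directions. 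For the converse, note that the intertwiners lying in $\mathcal{C}$ form exactly two disjoint circles, indexed by the sign in $\phi(\omega_1^+) = \pm\omega_1^-$ (the component condition fixes the determinant of $\phi|_{E^+}$ once this sign is chosen). Since $v \mapsto \phi_v$ is a homeomorphism of $\mathbb{P}(V)$ onto $\mathcal{C}$, it carries the two disjoint projective lines of the two planes injectively into these two circles; a continuous injection of a circle into a circle is onto, so each line exhausts its circle, and the eigenflag set is precisely the union of the two planes. With this paragraph added, your argument is a complete and self-contained proof of the lemma.
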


 We also showed  that the
product of two scalene ellipsoids had Weyl tensor of type C but it
 was not CTA, showing that for four manifolds our necessary condition was not sufficient. However, this left the question whether the product structure was important in the counterexample and genuine four dimensional examples exist.
With this aim, we generalize unimodular Lie groups, and equip them with
left invariant metrics, to find examples of Riemannian manifolds whose
Weyl tensors have the symmetries that correspond to a CTA manifold, or
even to a product of surfaces, but still they are not conformal to a
product of lower dimensional manifolds in any way.

For the case of Weyl tensors of type $B$,  there are four eigenflag directions that could admit LCWs; we provide examples of manifolds where only one, two
or three of the eigenflags correspond to actual limiting Carleman weights. 
The case of four LCWs is settled by the following observation that follows immediately from Corollary \ref{cor:every orth LCW} after observing that for Weyl tensors of type B, the eigenflag directions form an orthogonal basis.

\begin{cor}
Let $(M,g)$ be a $4$-manifold with a Weyl tensor of type B which admits four limiting Carleman weights. Then $(M,g)$ is conformally flat.
\end{cor}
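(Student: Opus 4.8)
The plan is to combine the two facts flagged in the excerpt: the classification of Weyl type B in Lemma~\ref{lem:Weyl}, and the conformal-flatness criterion in Corollary~\ref{cor:every orth LCW}. The corollary tells us that an $n$-manifold carrying $n$ orthogonal LCWs is conformally flat, so it suffices to show that four LCWs on a type~B $4$-manifold can be arranged to be pairwise orthogonal. The geometric input is that an LCW's gradient direction must be an eigenflag direction for the Weyl tensor (this is the relation from \cite{AFGR} recalled before Definition~\ref{eigenflag}), and that by Lemma~\ref{lem:Weyl}(B), a type~B Weyl operator has \emph{exactly four} eigenflag directions, which moreover form an orthogonal basis of the tangent space at each point.

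First I would record, at each point $p \in M$, the four eigenflag directions $L_1(p), \dots, L_4(p)$ determined by the Weyl tensor; by the type~B hypothesis these are four pairwise orthogonal lines. Given four LCWs $\varphi_1, \dots, \varphi_4$, each gradient $\nabla \varphi_j(p)$ must lie in one of these eigenflag directions. The key combinatorial step is to argue that the four LCWs must use four \emph{distinct} eigenflag directions: if two of them, say $\varphi_i$ and $\varphi_j$, had gradients along the same line $L_k(p)$ throughout a neighbourhood, then $\nabla \varphi_i$ and $\nabla \varphi_j$ would be everywhere parallel, forcing $\varphi_i = a\varphi_j + b$ by the remark following Theorem~\ref{thm:3-dim_case_several_LCWs} (two LCWs with linearly dependent gradients differ by an affine reparametrization), contradicting that we genuinely have four LCWs. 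Once the four gradients occupy the four distinct orthogonal eigenflag lines, the $\varphi_j$ are orthogonal LCWs by definition, and Corollary~\ref{cor:every orth LCW} applies directly to give conformal flatness.

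I expect the main obstacle to be the pigeonhole/continuity argument that forces the assignment of LCWs to eigenflag directions to be a genuine bijection pointwise, and that this assignment is locally constant. The eigenflag lines $L_k$ vary with $p$, so one must check that each $\nabla \varphi_j$ follows a \emph{single} eigenflag line on a connected neighbourhood rather than jumping between lines; this uses continuity of the gradients together with the fact that the four eigenflag directions stay separated (mutually orthogonal, hence never coalescing) on a type~B manifold, so a continuous unit gradient field cannot pass from one eigenflag line to another. With distinctness and local constancy established, orthogonality is immediate and the corollary finishes the proof.

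\begin{proof}[Proof sketch]
By the type~B hypothesis and Lemma~\ref{lem:Weyl}, at each point the Weyl tensor has exactly four eigenflag directions, and these form an orthogonal basis of the tangent space. Each LCW has gradient direction lying in an eigenflag direction. Two LCWs whose gradients share the same eigenflag line on a neighbourhood would have everywhere-parallel gradients, hence differ by $\psi \mapsto a\psi + b$, so the four given LCWs must occupy four distinct eigenflag lines; by continuity this assignment is locally constant since the four orthogonal eigenflag directions never coalesce. Thus $\varphi_1, \dots, \varphi_4$ are orthogonal LCWs, and Corollary~\ref{cor:every orth LCW} shows $(M,g)$ is conformally flat.
\end{proof}
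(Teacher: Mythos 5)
Your overall route is exactly the paper's: the paper settles this case by observing that for type~B Weyl tensors the eigenflag directions form an orthogonal basis at each point, that LCW gradients must lie along eigenflag directions, and then invoking Corollary~\ref{cor:every orth LCW}; your pigeonhole/continuity scaffolding is the right way to flesh out what the paper calls an immediate observation.

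However, one step does not hold up as written: you justify ``everywhere-parallel gradients force $\varphi_i = a\varphi_j + b$'' by citing the remark after Theorem~\ref{thm:3-dim_case_several_LCWs}, but that remark asserts the \emph{converse} implication (LCWs with linearly independent gradients are never affinely related, i.e.\ an affine relation forces parallel gradients). It does not say that parallel gradients force an affine relation, which is what your distinctness argument needs. The claim is in fact true, but it requires an argument: if $\nabla\varphi_i$ and $\nabla\varphi_j$ are everywhere parallel and nonvanishing, then locally $\varphi_i = f(\varphi_j)$ with $f' \neq 0$; by \cite[Theorem 1.2]{DKSaU} one may rescale conformally so that the metric becomes $dt^2 + g_0$ with $\varphi_j = t$; then $\varphi_i = f(t)$ is an LCW if and only if $X = f'(t)^{-1}\partial_t$ is a conformal Killing field (Lemma~\ref{lemma_lcw_dksau}), and since $\mathcal{L}_X\bigl(dt^2+g_0\bigr) = 2\,\bigl(1/f'\bigr)'\,dt^2$ can be a multiple of $dt^2+g_0$ only when $\bigl(1/f'\bigr)' = 0$, the function $f$ must be affine. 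Alternatively, you can sidestep the issue entirely by reading the hypothesis as the paper intends (compare the wording of Theorem~\ref{thm:3-dim_case_several_LCWs}): ``four LCWs'' means four LCWs with pointwise linearly independent gradients, one along each eigenflag. Then no two gradients can share an eigenflag line for the trivial reason that they are linearly independent, all four orthogonal lines are occupied, the weights are orthogonal LCWs by definition, and Corollary~\ref{cor:every orth LCW} finishes the proof.
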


It remains to find an example of a metric with a Weyl tensor of type B, but not conformally transversally anisotropic. Inspired by dimension three, we look for them among the significantly more complex structure of 4-dimensional Lie groups.

\begin{thm}
\label{thm:4dim_no_LCW}
There exists a generalized unimodular Lie group $G$ with a left invariant metric with Weyl tensor of type B, but such that $G$ is not conformally anisotropic.
\end{thm}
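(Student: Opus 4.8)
The plan is to exhibit the group by a direct construction at the Lie-algebra level, exploiting the fact that for a left-invariant metric all the relevant objects---the curvature, hence the Weyl tensor and its eigenflag directions, together with the integrability conditions for the eigenflag distributions---are themselves left-invariant, so everything reduces to a finite algebraic computation on $\lieg = T_eG$. I fix a left-invariant orthonormal frame $e_1,e_2,e_3,e_4$ and encode the metric Lie group by its structure constants $c_{jk}^i$, with $[e_j,e_k]=\sum_i c_{jk}^i e_i$; the generalized unimodular ansatz restricts these constants to a form (antisymmetric in the spirit of Milnor's three-dimensional classification) for which the Levi-Civita connection and the curvature tensor are explicitly computable.

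The logical skeleton of the argument is short. If $G$ were CTA it would carry an LCW $\varphi$, and by the necessary condition of \cite{AFGR} the gradient direction of $\varphi$ must be an eigenflag direction of the Weyl tensor at every point. Since $G$ will be arranged to have Weyl tensor of type B, Lemma \ref{lem:Weyl} gives exactly four eigenflag directions, and (as recalled just before the statement) they form an orthonormal basis; by left-invariance these are four left-invariant line fields, which I will take to be $e_1,e_2,e_3,e_4$. On a connected neighbourhood the nonvanishing gradient of $\varphi$ takes, as a direction, values in the finite set $\{e_1,\dots,e_4\}$, so by continuity it follows a single one, say $e_i$; then the level sets of $\varphi$ are the integral leaves of the hyperplane distribution $e_i^\perp=\mathrm{span}(e_j:j\neq i)$, so $e_i^\perp$ must be Frobenius-integrable. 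Thus it suffices to produce a type-B example for which $e_i^\perp$ is non-integrable for every $i$, i.e.\ for each $i$ there exist $j,k\neq i$ with $c_{jk}^i\neq0$.

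The construction therefore reduces to choosing structure constants meeting two simultaneous demands. First, the Weyl operator must be of type B: in the eigenflag frame it should be diagonal on the space of bivectors, and the three paired eigenvalues $\lambda_{12}=\lambda_{34}$, $\lambda_{13}=\lambda_{24}$, $\lambda_{14}=\lambda_{23}$ (the pairing being forced by the trace-free Weyl identities) must be pairwise distinct; distinctness is exactly what rules out type C (where two of them coincide, producing a four-dimensional eigenspace) and type D ($W=0$), and it guarantees precisely the four eigenflags $e_1,\dots,e_4$. Second, each eigenflag distribution $e_i^\perp$ must fail integrability, which is the condition $c_{jk}^i\neq0$ for some $j,k\neq i$. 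I would first impose enough relations on the $c_{jk}^i$ to force $W$ to be diagonal in the chosen frame, then compute the three paired eigenvalues as explicit quadratic expressions in the remaining constants and select values making them pairwise distinct, and finally verify that the surviving nonzero brackets obstruct integrability of all four hyperplanes simultaneously. Checking unimodularity ($\mathrm{tr}\,\mathrm{ad}_X=0$) and the Jacobi identity closes the construction, and non-CTA then follows at once from the skeleton above.

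The main obstacle is the tension between these two demands. The symmetry that makes $W$ diagonal with the type-B pairing tends to push the bracket structure toward product-like, integrable configurations---indeed a genuine product of surfaces is the easiest way to realise such a Weyl tensor, and it is CTA---whereas non-integrability of all four $e_i^\perp$ forces the brackets to mix the frame directions in a controlled but essentially non-product way. The delicate point is to keep enough off-diagonal bracket terms alive to destroy all four integrabilities without perturbing the Weyl operator out of type B and without accidentally collapsing two of the paired eigenvalues (which would drop the example into type C). Once explicit constants satisfying both constraints are found, the conclusion that $G$ is not conformally transversally anisotropic is immediate.
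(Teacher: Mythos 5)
Your logical skeleton is sound: for a left-invariant type-B metric the four eigenflag directions are left-invariant, the gradient direction of any putative LCW must coincide with one of them on a connected neighbourhood, and then $e_i^\perp=\ker d\varphi$ would have to be Frobenius-integrable, so exhibiting structure constants with type-B Weyl tensor and all four $e_i^\perp$ non-integrable would indeed prove the theorem. But the proof has a genuine gap, and it is exactly at the point you yourself flag as ``the main obstacle'': you never produce the structure constants. For an existence theorem the explicit example \emph{is} the proof; a description of the search procedure (``impose relations, compute the paired eigenvalues, select values, verify integrability fails'') does not establish that the two competing demands---diagonal type-B Weyl operator and non-integrability of all four hyperplane distributions---can be met simultaneously, together with the Jacobi identity. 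The paper resolves this by writing down one concrete algebra ($[e_0,e_1]=-\tfrac12 e_2$, $[e_0,e_2]=-e_1$, $[e_1,e_2]=e_0$, $[e_1,e_3]=-\tfrac12 e_2$, $[e_2,e_3]=-e_1$) and computing $W$ and the second fundamental forms explicitly.

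There is a second, more subtle problem: your obstruction toolkit (integrability alone) is strictly weaker than what the paper uses, and the paper's own example shows why this matters. In that example the distribution $e_3^\perp$ \emph{is} integrable---its second fundamental form is symmetric---and the direction $e_3$ is ruled out only because the second fundamental form fails to be a multiple of the identity, i.e.\ by the umbilicity part of the necessary condition from \cite[Theorem 1.5]{AFG}. So your plan requires an example strictly stronger than the one the paper found (all four distributions non-integrable), and you have given no evidence that the type-B constraints, unimodularity, and the Jacobi identity are compatible with that stronger requirement; it is conceivable they force at least one $e_i^\perp$ to be integrable. To close the gap you should either exhibit explicit constants meeting your stronger demand, or add umbilicity of $e_i^\perp$ to your list of necessary conditions (the level sets of an LCW are umbilical hypersurfaces in the conformal class), which restores the flexibility the paper's construction actually exploits.
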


Thus Weyl tensors of type $B$ are fully understood. 
Finally we discuss the case of Weyl tensors of type $C$. We recall that this is the case appearing when $M$ is the Riemannian  product of surfaces. This was already considered in
\cite{AFG}.

\begin{thm}\label{thm: products of surfaces which admit LCW}

  Let $(S_ 1, g_ 1)$ and $(S_2,g_2)$ be open subsets of $\R^2$ with Riemannian metrics. Assume that the Weyl operator of the product metric does not vanish at any point.
  The following are equivalent:
  \begin{itemize}
    \item $(S_1, g_1)$ is locally isometric to a surface of revolution.

    \item $(S_1	\times S_2, g_1\times g_2)$  admits a LCW that is everywhere tangent to the first
    factor.

  \end{itemize}
\end{thm}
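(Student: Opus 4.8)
The plan is to prove the equivalence by passing to the conformal gauge in which the gradient of a limiting Carleman weight becomes parallel, and then exploiting the rigidity of the product metric; I work locally near a point $p$. Before separating the two implications I record a reduction. Since $g=g_1\oplus g_2$ is a Riemannian product and an LCW $\varphi$ tangent to the first factor has $\nabla_g\varphi\in TS_1$ everywhere, the $S_2$-components of the gradient vanish; as $g_2$ is nondegenerate this forces $\partial_{y^a}\varphi=0$, so $\varphi=\varphi(x)$ is a function on $S_1$ with $\nabla_{g_1}\varphi\neq 0$. Next I use the characterization (due to \cite{DKSaU}, and equivalent to the normal form of Theorem~\ref{thm:diagonal_metric} with $m=1$) that $\varphi$ is an LCW if and only if, locally, there is a conformal factor $e^{2\rho}$ such that $\hat g=e^{2\rho}g$ makes $\nabla\varphi$ a parallel unit field: $\hat\nabla^2\varphi=0$ and $|\hat\nabla\varphi|_{\hat g}=1$. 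The normalization forces $e^{2\rho}=|\nabla_{g_1}\varphi|_{g_1}^2$, a function of $x$ alone; hence $\rho=\rho(x)$ is determined and lives on $S_1$. That both $\varphi$ and the conformal factor are functions on $S_1$ is what makes the product decomposition effective.

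For the direction ``surface of revolution $\Rightarrow$ LCW'', suppose $g_1$ is locally $du^2+f(u)^2\,d\theta^2$. Multiplying $g=\big(du^2+f(u)^2\,d\theta^2\big)\oplus g_2$ by $c=f(u)^{-2}$ gives
\[
cg=d\theta^2\oplus f(u)^{-2}\big(du^2\oplus g_2\big),
\]
whose transverse block is independent of $\theta$. By the converse part of Theorem~\ref{thm:diagonal_metric} (with $z_1=\theta$), $\theta$ is an LCW; being a function on $S_1$, its gradient is tangent to the first factor.

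For the converse I transport $\hat\nabla^2\varphi=0$ back to $(M,g)$ via the conformal law for the Hessian, obtaining
\[
\nabla^2\varphi(X,Y)=X(\varphi)\,Y(\rho)+Y(\varphi)\,X(\rho)-\langle\nabla\rho,\nabla\varphi\rangle\,g(X,Y),
\]
and decompose $X,Y$ along the two factors. Because $\varphi,\rho$ live on $S_1$ and $g$ is a product, all mixed derivatives and mixed Christoffel symbols vanish. Taking $X,Y\in TS_2$ kills the left-hand side and leaves $-\langle\nabla\rho,\nabla\varphi\rangle_{g_1}\,g_2(X,Y)=0$, forcing $\langle\nabla\rho,\nabla\varphi\rangle_{g_1}=0$. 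Substituting this back, the $TS_1\times TS_1$ block collapses to the surface identity $\nabla^2_{g_1}\varphi=d\varphi\otimes d\rho+d\rho\otimes d\varphi$. Choosing orthogonal coordinates $(z_1,z_2)$ on $S_1$ with $z_1=\varphi$ and $g_1=E\,dz_1^2+G\,dz_2^2$, so that $\rho=-\tfrac12\log E$, the orthogonality relation reads $\partial_{z_1}E=0$ while the $(z_2,z_2)$-component of the surface identity reads $\partial_{z_1}G=0$. Hence $g_1$ is independent of $z_1=\varphi$, so $\partial_\varphi$ is a nonvanishing local Killing field and $(S_1,g_1)$ is locally a surface of revolution (reparametrizing $z_2$ by arclength puts $g_1$ in the form $du^2+f(u)^2\,d\theta^2$).

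The main obstacle is the careful bookkeeping of the converse: one must verify that the block decomposition of the Hessian equation is clean, i.e.\ that the product structure really does annihilate all mixed terms, and that the transverse ($TS_2$) equation yields exactly the orthogonality $\langle\nabla\rho,\nabla\varphi\rangle=0$ that trivializes the $S_1$ equation into a $\varphi$-independence statement. The hypothesis that the Weyl operator is nowhere zero is what situates the problem in the type~C regime of Lemma~\ref{lem:Weyl}---the Weyl type of a genuine product of surfaces, whose eigenflag directions fill out $TS_1\cup TS_2$ in the sense of Definition~\ref{eigenflag}---and rules out the conformally flat degeneration governed by Corollary~\ref{cor:every orth LCW} and the Euclidean classification; I would check that this assumption guarantees that ``everywhere tangent to the first factor'' is the correct invariant hypothesis and that the Killing field produced above is genuinely nontrivial rather than an artifact of flatness.
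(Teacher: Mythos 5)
Your proof is correct, and it rests on the same gauge choice as the paper's: pass to the conformal metric $\hat g = \abs{\nabla_g\varphi}_g^2\, g$ in which $\nabla\varphi$ becomes a unit parallel field, and exploit the fact that both $\varphi$ and this conformal factor are functions on $S_1$ alone. The execution of the hard direction, however, is genuinely different. The paper works with the \emph{integrated} form of parallelism: it takes isothermal coordinates on $S_2$ and adapted coordinates $(\varphi,\psi)$ on $S_1$, quotes the normal form of a metric admitting a unit parallel gradient (a product $dx^2\oplus h$ with $h$ independent of $x$), and then matches coefficients of $f\cdot g$ against this normal form, so that the revolution structure drops out of the dependency bookkeeping $f=\tfrac1a=\tfrac{h_3}{\lambda}$. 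You instead work with the \emph{differential} form of parallelism: you transport $\hat\nabla^2\varphi=0$ back to $g$ via the conformal transformation law for the Hessian and decompose into blocks, where the $TS_2\times TS_2$ block yields $\langle\nabla\rho,\nabla\varphi\rangle_{g_1}=0$ and the $TS_1\times TS_1$ block, in coordinates $(z_1,z_2)=(\varphi,\psi)$ with $g_1=E\,dz_1^2+G\,dz_2^2$, then yields $\partial_{z_1}E=\partial_{z_1}G=0$; both computations check out, including the sign conventions in your Hessian formula. What your route buys: it never needs isothermal coordinates on $S_2$ (the transverse factor enters only through the vanishing of mixed Christoffel symbols, so $g_2$ is arbitrary and unnormalized), and it reduces the converse to a two-dimensional PDE identity. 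What the paper's route buys: it avoids the Hessian transformation law entirely, at the cost of invoking the splitting normal form. You also prove the easy direction from the converse part of Theorem \ref{thm:diagonal_metric}, whereas the paper cites \cite{AFG}; both are fine. One clarification regarding your closing paragraph: the hedge about the Weyl hypothesis is unnecessary. Once tangency to the first factor is \emph{assumed}, as in this statement, neither direction of your argument uses $W\neq 0$; that hypothesis matters only for the stronger formulation proved in Section \ref{sec8} preliminaries and Section \ref{sec7}, where type C of the Weyl tensor is what forces the gradient of an arbitrary LCW to be tangent to one of the two factors. Likewise, your Killing field $\partial_{z_1}$ is a nonvanishing coordinate field, so there is no flatness degeneration to rule out.
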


This theorem was behind our example of two scalene ellipsoids having eigenflag directions but not being CTA. 
In Section \ref{subsec:4dim_no_LCWs} we construct a Lie group whose Weyl tensor is type C, has no LCWs, and it is not  a product of surfaces.

To help the reader, we finish the introduction by including  a few tables that summarize some of the results from this paper and from \cite{AFG} in regard to the question of which eigenflag directions can be realized by LCWs.

\begin{center}
\textbf{Dimension 4, Weyl type B:}
\end{center}
Structure of eigenflags: 4 directions, forming an orthogonal basis.

\begin{itemize}
\item Number of LCWs:
\begin{itemize}
\item Zero: example given in Section \ref{subsec:4dim-noLCW};
\item One: $M=\mathbb{R}\times N^3$, where the metric in $N^3$ has no LCWs;
\item Two: $M$ has a warped product structure, Section \ref{subsec:4dim,2LCWs};
\item Three: $M$ is an iterated warped product, Section \ref{subsec:4dim,3LCWs};
\item Four: $M$ is conformally flat, Corollary \ref{cor:every orth LCW}.
\end{itemize}
\end{itemize}

\begin{center}
\textbf{Dimension 4, Weyl type C:}
\end{center}
\begin{itemize}
\item Structure of eigenflags: two orthogonal 2-planes.
\item They do not need to have LCWs: Theorem \ref{thm:4dim_no_LCW}, proof in Section \ref{subsec:4dim_no_LCWs};
\item They do not need to arise from product of surfaces.
\end{itemize}

The rest of this article is organized as follows. Section \ref{sec_conformal_killing} discusses conformal Killing fields, Section \ref{sec3} deals with the action of the conformal group, and Section \ref{sec4} characterizes Euclidean LCWs by proving
Theorems~\ref{thm_lcw_rn_classification} and \ref{thm_lcw_rn_orbits}.  Section \ref{sec5} analyzes manifolds with several  orthogonal LCWs, and 
 Section \ref{sec6} deals with three manifolds. The analysis of four manifolds is given in Section \ref{sec7} (structural results) and Section \ref{sec8} (counterexamples).

\section{Conformal Killing vector fields} \label{sec_conformal_killing}

We begin with a result that follows from \cite[Lemma 2.9]{DKSaU}:

\begin{Lemma} \label{lemma_lcw_dksau}
Let $(M,g)$ be a simply connected open Riemannian manifold, and let $\varphi \in C^{\infty}(M)$ satisfy $d\varphi \neq 0$ everywhere. Then $\varphi$ is an LCW in $(M,g)$ if and only if $\abs{\nabla \varphi}^{-2} \nabla \varphi$ is a conformal Killing vector field in $(M,g)$.
\end{Lemma}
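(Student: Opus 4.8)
The plan is to reduce the statement to a single pointwise tensor identity and to invoke the Hessian characterization of LCWs recorded in \cite[Lemma 2.9]{DKSaU}. Recall that $\varphi$ is an LCW exactly when the Poisson bracket $\{\operatorname{Re}p,\operatorname{Im}p\}$ of the conjugated semiclassical symbol $p=\abs{\xi}_g^2-\abs{\nabla\varphi}_g^2+2i\langle\xi,\nabla\varphi\rangle$ vanishes on the characteristic set $\{p=0\}$. Writing $\beta=\abs{\nabla\varphi}^2$, the content of \cite[Lemma 2.9]{DKSaU} is that this is equivalent to the restricted (covariant) Hessian being pure trace, i.e.\ $\nabla^2\varphi(v,v)=\mu\,\abs{v}^2$ for all $v\perp\nabla\varphi$, together with the precise value $\mu=-\tfrac{1}{2\beta}\langle\nabla\beta,\nabla\varphi\rangle$. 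It therefore suffices to show that this condition coincides with the conformal Killing equation for $X=\beta^{-1}\nabla\varphi$.

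First I would write the conformal Killing equation in trace-free form: $X$ is conformal Killing iff the symmetric tensor $\tfrac12(\nabla_iX_j+\nabla_jX_i)-\tfrac1n(\div X)g_{ij}$ vanishes. Using $X^\flat=\beta^{-1}d\varphi$ one computes $\nabla_iX_j=\beta^{-1}(\nabla^2\varphi)_{ij}-\beta^{-2}(\partial_i\beta)(\partial_j\varphi)$, so after multiplying by $\beta$ the equation becomes
\[
\nabla^2\varphi-\frac{1}{2\beta}\bigl(d\varphi\otimes d\beta+d\beta\otimes d\varphi\bigr)=\frac1n\Bigl(\Delta\varphi-\frac{1}{\beta}\langle\nabla\beta,\nabla\varphi\rangle\Bigr)g.
\]
The entire argument is then the verification that this tensor identity holds if and only if the restricted Hessian is pure trace with the stated scalar. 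I would analyse it block by block with respect to the splitting $TM=\mathbb{R}\nabla\varphi\oplus(\nabla\varphi)^\perp$, using throughout the elementary identity $\langle\nabla\beta,w\rangle=2\,\nabla^2\varphi(\nabla\varphi,w)$, which follows from metric compatibility.

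The three blocks behave as follows. On $(\nabla\varphi)^\perp\times(\nabla\varphi)^\perp$ the middle term drops out, and the trace-free part of the equation gives exactly $\nabla^2\varphi(v,v')=\mu\langle v,v'\rangle$ (umbilicity), while its trace part encodes the scalar constraint $\Delta\varphi=\tfrac{2-n}{2\beta}\langle\nabla\beta,\nabla\varphi\rangle$; together these are precisely the DKSaU/LCW condition, since a short computation shows that this constraint is equivalent to $\mu=-\tfrac{1}{2\beta}\langle\nabla\beta,\nabla\varphi\rangle$. The mixed block $(\nabla\varphi,v)$, $v\perp\nabla\varphi$, is an identity valid for every $\varphi$: both sides equal $\tfrac12\langle\nabla\beta,v\rangle$ by the identity above, so it imposes nothing. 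Finally, the normal–normal block $(\nabla\varphi,\nabla\varphi)$ is automatic, because the tensor on the left of the display differs from a trace-free tensor by construction, so once the orthogonal block vanishes the trace relation forces the normal–normal component to vanish as well.

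I expect the main obstacle to be the careful bookkeeping that makes the equivalence exact with \emph{no surplus conditions}: one must confirm that the orthogonal block simultaneously delivers umbilicity and the correct Laplacian/scalar value, and that the remaining blocks add nothing, so that the full conformal Killing system is neither stronger nor weaker than the single LCW condition. A secondary point requiring care is that \cite[Lemma 2.9]{DKSaU} is stated with the covariant Hessian, so I must use covariant derivatives throughout in order that the Christoffel contributions in $\{\operatorname{Re}p,\operatorname{Im}p\}$ be correctly absorbed into $\nabla^2\varphi$, keeping the manifold computation parallel to the Euclidean one. The hypothesis that $M$ is simply connected plays only a globalizing role: it lets one pass between the closed one-form $X^\flat=\beta^{-1}d\varphi$ and an honest potential, so that the weight and its conformal Killing field determine each other globally rather than merely as germs, matching the formulation of \cite[Lemma 2.9]{DKSaU}.
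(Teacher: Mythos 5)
The paper does not actually prove this lemma: it is stated as an immediate consequence of \cite[Lemma 2.9]{DKSaU}, with no argument given, so there is no internal proof to diverge from. Your proposal supplies exactly the derivation that the citation suppresses, and it is correct. Writing $\beta=\abs{\nabla\varphi}^2$, your displayed identity is precisely $\beta$ times the trace-free conformal Killing equation for $X=\beta^{-1}\nabla\varphi$, and the block analysis closes as you claim: the mixed block vanishes identically because $\langle\nabla\beta,v\rangle=2\nabla^2\varphi(\nabla\varphi,v)$; the orthogonal block is equivalent to umbilicity of $(\nabla\varphi)^\perp$ together with $\Delta\varphi=\tfrac{2-n}{2\beta}\langle\nabla\beta,\nabla\varphi\rangle$, which pins the umbilicity scalar to $\mu=-\tfrac{1}{2\beta}\langle\nabla\beta,\nabla\varphi\rangle$, i.e.\ to the Poisson-bracket/Hessian characterization of LCWs; and since the trace-free tensor has identically vanishing mixed block and zero trace, vanishing of its orthogonal block forces the normal--normal component to vanish, so the conformal Killing system imposes nothing beyond the LCW condition. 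This is the standard way to obtain the lemma, so your route and the paper's intended route (through \cite{DKSaU}) coincide in substance; you have merely made it explicit.

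Two minor points deserve correction, though neither is a gap. First, simple connectedness plays no role in this lemma: both the LCW condition and the conformal Killing equation are pointwise conditions on the $2$-jet of $\varphi$, so the equivalence holds on any open manifold; the hypothesis is needed only in Lemma \ref{lemma_lcw_ck}, where a potential is recovered by integrating a closed $1$-form. Second, and relatedly, your closing sentence misidentifies the closed form: $X^\flat=\beta^{-1}d\varphi$ is not closed in general (its differential is $-\beta^{-2}d\beta\wedge d\varphi$); what is closed is $\abs{X}^{-2}X^\flat=d\varphi$, which is the identity the paper exploits in the next lemma. Finally, whether the Hessian characterization you start from is literally Lemma 2.9 of \cite{DKSaU} or an earlier numbered statement there is a citation-level issue, not a mathematical one.
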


This immediately implies the following result.

\begin{Lemma} \label{lemma_lcw_ck}
Let $(M,g)$ be a simply connected open Riemannian manifold. The map
\[
\varphi \mapsto \abs{\nabla \varphi}^{-2} \nabla \varphi
\]
gives a bijective correspondence between the set of LCWs in $(M,g)$ modulo additive constants and the set of nonvanishing conformal Killing vector fields $X$ in $(M,g)$ satisfying $d(\abs{X}^{-2} X^{\flat}) = 0$. For any such $X$, there is a unique LCW $\varphi$ up to an additive constant that satisfies $\abs{X}^{-2} X = \nabla \varphi$.
\end{Lemma}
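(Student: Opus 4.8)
The plan is to verify that the stated map and the integration of the closed one-form $\abs{X}^{-2}X^\flat$ are mutually inverse, with essentially all of the analytic content already supplied by Lemma~\ref{lemma_lcw_dksau}; the only genuinely new ingredient is the Poincar\'e lemma on the simply connected manifold $M$.

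First I would record the key algebraic identity. If $\varphi$ is an LCW and $X=\abs{\nabla\varphi}^{-2}\nabla\varphi$, then $X^\flat=\abs{\nabla\varphi}^{-2}\,d\varphi$ and $\abs{X}=\abs{\nabla\varphi}^{-1}$, so that
\[
\abs{X}^{-2}X^\flat=\abs{\nabla\varphi}^{2}\cdot\abs{\nabla\varphi}^{-2}\,d\varphi=d\varphi.
\]
In particular $d(\abs{X}^{-2}X^\flat)=0$ automatically, and by Lemma~\ref{lemma_lcw_dksau} this $X$ is a nonvanishing conformal Killing field (nonvanishing since $\abs{X}=\abs{\nabla\varphi}^{-1}\neq 0$). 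Because $\nabla(\varphi+c)=\nabla\varphi$, the assignment descends to LCWs modulo additive constants, giving a well-defined map into the stated target set.

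For the inverse, given a nonvanishing conformal Killing field $X$ with $d(\abs{X}^{-2}X^\flat)=0$, the one-form $\abs{X}^{-2}X^\flat$ is closed, hence exact because $M$ is simply connected; I would write it as $d\varphi$ for a $\varphi$ that is unique up to an additive constant. Raising indices gives $\nabla\varphi=\abs{X}^{-2}X$, which is nonvanishing, and then $\abs{\nabla\varphi}=\abs{X}^{-1}$ yields
\[
\abs{\nabla\varphi}^{-2}\nabla\varphi=\abs{X}^{2}\cdot\abs{X}^{-2}X=X.
\]
Thus $\abs{\nabla\varphi}^{-2}\nabla\varphi$ equals the conformal Killing field $X$, so Lemma~\ref{lemma_lcw_dksau} shows $\varphi$ is an LCW. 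This is exactly the asserted existence and uniqueness (up to a constant) of an LCW with $\abs{X}^{-2}X=\nabla\varphi$.

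Finally I would observe that the two constructions are mutually inverse: the computations above show that $Y\mapsto\abs{Y}^{-2}Y$ is an involution on nonvanishing vector fields, so starting from $[\varphi]$ and returning via the integration step recovers $d\varphi$ and hence $[\varphi]$, while starting from $X$ and applying the forward map returns $X$. This gives the claimed bijection modulo additive constants. The main (and only) obstacle beyond bookkeeping of the musical isomorphisms and the scaling factors is the passage from closedness of $\abs{X}^{-2}X^\flat$ to its being a gradient, which is precisely where simple connectedness enters; a minor subtlety worth flagging is that the correct one-form to integrate is $\abs{X}^{-2}X^\flat$ rather than $X^\flat$ itself, as forced by solving $\abs{\nabla\varphi}^{-2}\nabla\varphi=X$ for $\nabla\varphi$.
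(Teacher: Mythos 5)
Your proof is correct and follows essentially the same route as the paper's: both directions reduce to Lemma~\ref{lemma_lcw_dksau}, with simple connectedness used exactly where you flag it, namely to integrate the closed one-form $\abs{X}^{-2}X^{\flat}$ to a potential $\varphi$, and bijectivity following because gradients determine functions up to additive constants. Your write-up merely makes explicit the identity $\abs{X}^{-2}X^{\flat}=d\varphi$ and the involution property of $Y \mapsto \abs{Y}^{-2}Y$, which the paper leaves implicit.
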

\begin{proof}
If $\varphi$ is an LCW, then $X = \abs{\nabla \varphi}^{-2} \nabla \varphi$ is a nonvanishing conformal Killing vector field by Lemma \ref{lemma_lcw_dksau} and $d(\abs{X}^{-2} X^{\flat}) = 0$. Conversely, if $X$ is as stated, then $\abs{X}^{-2} X = \nabla \varphi$ for some $\varphi \in C^{\infty}(M)$. Then $\abs{\nabla \varphi}^{-2} \nabla \varphi = X$ is a conformal Killing vector field, and $\varphi$ is an LCW by the previous lemma. The correspondence is bijective, since if $\abs{X}^{-2} X = \nabla \varphi_1 = \nabla \varphi_2$, then $\varphi_1$ and $\varphi_2$ differ by a constant.
\end{proof}

It follows that the classification of LCWs reduces to the determination of all nonvanishing conformal Killing vector fields satisfying $d(\abs{X}^{-2} X^{\flat}) = 0$. The classification of conformal Killing vector fields in $\mR^n$ is well known, but we give a proof for completeness.

\begin{Lemma} \label{lemma_ck_euclidean}
Let $\Omega \subset \mR^n$, $n \geq 3$, be open and connected. Then $X \in C^{\infty}(\Omega, \mR^n)$ is a conformal Killing vector field in $(\Omega, e)$ if and only if
\[
X(x) = (\alpha \cdot x)x - \frac{1}{2} \alpha \abs{x}^2 + cx + Bx + \gamma
\]
for some $\alpha, \gamma \in \mR^n$, $c \in \mR$, and some skew-symmetric matrix $B$.
\end{Lemma}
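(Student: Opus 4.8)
The plan is to show first that any conformal Killing field on a Euclidean domain is a polynomial of degree at most two, and then to identify the coefficients by subtraction. Writing $X = (X_1,\dots,X_n)$ and $\mu = \frac1n \div X$, the conformal Killing equation $\Lie_X e = 2\mu\, e$ reads
\[
\partial_i X_j + \partial_j X_i = 2\mu\,\delta_{ij}, \qquad \mu = \tfrac1n \div X .
\]
First I would differentiate this relation and form the combination $\partial_i(\,\cdot\,)_{jk} + \partial_j(\,\cdot\,)_{ik} - \partial_k(\,\cdot\,)_{ij}$: the second-derivative cross terms telescope and one is left with the clean identity
\[
\partial_i\partial_j X_k = (\partial_i\mu)\delta_{jk} + (\partial_j\mu)\delta_{ik} - (\partial_k\mu)\delta_{ij}.
\]

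Next I would differentiate this once more in $x_\ell$, writing $\mu_{ab}=\partial_a\partial_b\mu$, and use that the left-hand side $\partial_\ell\partial_i\partial_j X_k$ is symmetric under exchanging $i$ and $\ell$; cancelling the $\mu_{i\ell}$ terms yields the relation $\mu_{\ell j}\delta_{ik} - \mu_{\ell k}\delta_{ij} = \mu_{ij}\delta_{\ell k} - \mu_{ik}\delta_{\ell j}$ among the second derivatives of $\mu$. Contracting the indices $i$ and $k$ gives $(n-2)\mu_{\ell j} = -(\Delta\mu)\delta_{\ell j}$, and a further trace forces $\Delta\mu=0$, whence $(n-2)\mu_{\ell j}=0$. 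This contraction is precisely where the hypothesis $n\geq 3$ is essential, and it is the main obstacle of the argument: it is what lets us conclude $\mu_{\ell j}\equiv 0$, so that $\mu$ is an affine function $\mu(x) = c + \alpha\cdot x$ for some $c\in\mR$, $\alpha\in\mR^n$. Feeding this back into the displayed identity makes all second derivatives of $X$ constant, so $X$ is a polynomial of degree at most two.

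Once $X$ is known to be quadratic I would finish by subtraction rather than by brute-force coefficient matching. A direct computation shows that the field $Y(x) = (\alpha\cdot x)x - \frac12\alpha\abs{x}^2 + cx$ is itself conformal Killing, with $\partial_i Y_j + \partial_j Y_i = 2(c+\alpha\cdot x)\delta_{ij}$, so that $\frac1n\div Y = c + \alpha\cdot x = \mu$. Therefore $Z := X - Y$ satisfies $\partial_i Z_j + \partial_j Z_i = 0$, i.e.\ $Z$ is a genuine Killing field; the same polynomial-degree argument (now with $\mu\equiv 0$) shows $Z$ is affine, $Z(x) = Bx + \gamma$, and the Killing equation forces its linear part $B$ to be skew-symmetric. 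Combining gives $X(x) = (\alpha\cdot x)x - \frac12\alpha\abs{x}^2 + cx + Bx + \gamma$, as claimed. The converse is immediate: each of the four summands is checked to solve the conformal Killing equation (the computations above already supply $\frac1n\div$ of each), so by linearity every $X$ of the stated form is a conformal Killing vector field.
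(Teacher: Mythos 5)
Your proof is correct, and its analytic core coincides with the paper's: both derive the prolongation identity $\partial_i\partial_j X_k = (\partial_i\mu)\delta_{jk} + (\partial_j\mu)\delta_{ik} - (\partial_k\mu)\delta_{ij}$ (this is \eqref{partialab_xc} in the paper, with $\lambda = 2\mu$), and both then use traces together with the hypothesis $n \geq 3$ to show that the Hessian of the conformal factor vanishes --- you by differentiating the identity once more, symmetrizing in the mixed partials and contracting, the paper by first tracing to get $2\Delta X_c = (2-n)\partial_c \lambda$ and then differentiating; these are equivalent manipulations, and $n \geq 3$ enters at exactly the same point ($(n-2)\,\partial_{cd}\mu = 0$ forcing $\partial_{cd}\mu = 0$). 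The genuine difference is the endgame. The paper integrates the identity explicitly, obtaining $X_j = \tfrac12(\alpha_k\delta_{lj}+\alpha_l\delta_{kj}-\alpha_j\delta_{kl})x_k x_l + m_{jk}x_k + \gamma_j$, and then substitutes back into the conformal Killing equation to force $m_{jk} = c\,\delta_{jk} + b_{jk}$; that re-substitution is what excludes a symmetric trace-free linear part. You instead verify that $Y(x) = (\alpha\cdot x)x - \tfrac12 \alpha \abs{x}^2 + cx$ is itself conformal Killing with the same factor $\mu = c + \alpha\cdot x$, so that $Z = X - Y$ is a genuine Killing field, which your degree argument (with $\mu \equiv 0$) plus skew-symmetry of the linear part identifies as $Bx + \gamma$. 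Your subtraction step is somewhat more modular: it replaces the final coefficient matching by the elementary classification of affine Killing fields, at the cost of having to produce the model solution $Y$ in advance; the paper's route requires no such guess but ends in a computation. Both arguments use connectedness in the same places and both cover the easy converse, yours explicitly by linearity of the conformal Killing equation.
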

\begin{proof}
The equations for a conformal Killing vector field are
\[
\partial_j X_k + \partial_k X_j = \lambda \delta_{jk} \ \ \text{ in $\Omega$}, \qquad 1 \leq j, k \leq n,
\]
where $\lambda \in C^{\infty}(\Omega)$ is a scalar function (actually $\lambda = \frac{2}{n} \mathrm{div}(X)$). We begin by showing that $\lambda$ has zero Hessian. Taking derivatives in the previous equation gives
\begin{align*}
(\partial_a \lambda) \delta_{bc} &= \partial_{ab} X_c + \partial_{ac} X_b, \\
(\partial_b \lambda) \delta_{ac} &= \partial_{ab} X_c + \partial_{bc} X_a, \\
(\partial_c \lambda) \delta_{ab} &= \partial_{ac} X_b + \partial_{bc} X_a.
\end{align*}
These three equations imply that
\begin{equation} \label{partialab_xc}
2 \partial_{ab} X_c = (\partial_a \lambda) \delta_{bc} + (\partial_b \lambda) \delta_{ac} - (\partial_c \lambda) \delta_{ab}.
\end{equation}
Taking traces gives
\[
2 \Delta X_c = (2-n) \partial_c \lambda.
\]
Differentiating once gives
\begin{align*}
(2-n) \partial_{cd} \lambda &= 2 \Delta \partial_d X_c, \\
(2-n) \partial_{cd} \lambda &= 2 \Delta \partial_c X_d
\end{align*}
and thus
\[
(2-n) \partial_{cd} \lambda = \Delta (\partial_c X_d + \partial_d X_c) = (\Delta \lambda) \delta_{cd}.
\]
Taking traces gives that
\[
(2-n) \Delta \lambda = n \Delta \lambda
\]
which gives $\Delta \lambda = 0$. The previous equation gives $\partial_{cd} \lambda = 0$ for all $c, d$.

Since $\Omega$ is connected, we have $\lambda(x) = \alpha \cdot x + \beta$ for some $\alpha \in \mR^n$ and $\beta \in \mR$. Now \eqref{partialab_xc} implies
\[
2 \partial_{ab} X_c = \alpha_a \delta_{bc} + \alpha_b \delta_{ac} - \alpha_c \delta_{ab}.
\]
Integrating gives that
\[
X_j = \frac{1}{2}(\alpha_k \delta_{lj} + \alpha_l \delta_{kj} - \alpha_j \delta_{kl}) x_k x_l + m_{jk} x_k + \gamma_j
\]
for some $m_{jk}, \gamma_j \in \mR$. We finally compute
\[
\partial_j X_k + \partial_k X_j = 2 (\alpha \cdot x) \delta_{jk} + m_{jk} + m_{kj}
\]
and this is of the form $\lambda(x) \delta_{jk}$ if and only if $m_{jk} = c \delta_{jk} + b_{jk}$ for some $c \in \mR$ and some skew-symmetric matrix $B = (b_{jk})$.
\end{proof}

By Lemmas \ref{lemma_lcw_ck} and \ref{lemma_ck_euclidean}, the classification of all LCWs in $\mR^n$ follows once one checks which of the vector fields in Lemma \ref{lemma_ck_euclidean} satisfy $d(\abs{X}^{-2} X^{\flat}) = 0$. Presumably one could use Lemma \ref{lemma_lcw_ck} to characterize LCWs in other manifolds as well, provided that one can determine all conformal Killing vector fields.

\section{Action of the conformal group} \label{sec3}

The next lemma describes how the conformal group acts on LCWs and conformal Killing vector fields (in part (e) we also include multiplication by scalars).

\begin{Lemma} \label{lemma_conformal_group_action}
Let $\varphi$ be an LCW in a connected open set $\Omega \subset \mR^n$, $n \geq 3$, and let $X = \frac{d\varphi}{|d\varphi|_e^2}$ be the corresponding conformal Killing $1$-form written as
\[
X(x) = (\alpha \cdot x)x - \frac{1}{2} \alpha \abs{x}^2 + cx + Bx + \gamma.
\]
Let $F: \tilde{\Omega} \to \Omega$ be a conformal transformation from an open set $\tilde{\Omega} \subset \mR^{n}$ onto $\Omega$, i.e.\ $\kappa F^* e = e$ for some smooth positive $\kappa$. Then $\tilde{\varphi} = F^* \varphi$ is an LCW in $\tilde{\Omega}$ corresponding to conformal Killing $1$-form $\widetilde{X} = \kappa F^* X = \frac{d\tilde{\varphi}}{|d\tilde{\varphi}|_e^2}$, which be written as
\[
\widetilde{X}(x) = (\tilde{\alpha} \cdot x)x - \frac{1}{2} \tilde{\alpha} \abs{x}^2 + \tilde{c}x + \tilde{B}x + \tilde{\gamma}.
\]
\begin{enumerate}
\item[(a)]
If $F(x) = x-x_0$, then
\[
(\tilde{\alpha}, \tilde{c}, \tilde{B}, \tilde{\gamma}) = (\alpha, c-\alpha \cdot x_0, B + \alpha \wedge x_0, \gamma + (\alpha \cdot x_0) x_0 - \frac{1}{2} \abs{x_0}^2 \alpha - c x_0 - B x_0)
\]
where $\alpha \wedge \beta$ is the skew-symmetric matrix $(\alpha_j \beta_k - \beta_j \alpha_k)_{j,k=1}^n$.
\item[(b)]
If $F(x) = x/r$ for some $r \in \mR \setminus \{0\}$, then
\[
(\tilde{\alpha}, \tilde{c}, \tilde{B}, \tilde{\gamma}) = (\frac{1}{r}\alpha, c, B, r \gamma).
\]
\item[(c)]
If $F(x) = Rx$ for some matrix $R$ with $R^t R = I$, then
\[
(\tilde{\alpha}, \tilde{c}, \tilde{B}, \tilde{\gamma}) = (R^t \alpha, c, R^t B R, R^t \gamma).
\]
\item[(d)]
If $F(x) = \frac{x}{\abs{x}^2}$, then
\[
(\tilde{\alpha}, \tilde{c}, \tilde{B}, \tilde{\gamma}) = (-2\gamma, -c, B, -\frac{1}{2} \alpha).
\]
\end{enumerate}
Moreover:
\begin{enumerate}
\item[(e)]
If $k \in \mR \setminus \{0\}$, then $\tilde{\varphi} = k^{-1} \varphi$ is an LCW corresponding to $\widetilde{X}$, where
\[
(\tilde{\alpha}, \tilde{c}, \tilde{B}, \tilde{\gamma}) = (k \alpha, k c, k B, k \gamma).
\]\end{enumerate}
\end{Lemma}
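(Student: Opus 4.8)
The plan is to first establish the general transformation law $\widetilde{X} = \kappa F^* X$ asserted in the statement, and only afterwards to specialize to the four explicit families. The starting point is that pullback commutes with the exterior derivative, so $d\tilde{\varphi} = d(F^*\varphi) = F^*(d\varphi)$. Since $F$ is conformal with $\kappa F^* e = e$, its differential satisfies $dF(x)^t dF(x) = \kappa(x)^{-1} I$, whence $dF\, dF^t = \kappa^{-1} I$ as well, and therefore for any $1$-form $\omega$ one has the pointwise identity $\abs{F^*\omega}_e^2 = \kappa^{-1} F^*(\abs{\omega}_e^2)$. Applying this with $\omega = d\varphi$ and recalling $X = d\varphi/\abs{d\varphi}_e^2$ gives
\[
\frac{d\tilde{\varphi}}{\abs{d\tilde{\varphi}}_e^2} = \frac{F^*(d\varphi)}{\kappa^{-1} F^*(\abs{d\varphi}_e^2)} = \kappa\, F^*\!\left(\frac{d\varphi}{\abs{d\varphi}_e^2}\right) = \kappa F^* X.
\]
Because the LCW property is conformally invariant, $\tilde{\varphi}$ is an LCW in $(\tilde{\Omega}, F^*e) = (\tilde{\Omega}, \kappa^{-1}e)$ and hence in $(\tilde{\Omega}, e)$; equivalently, by Lemma~\ref{lemma_lcw_dksau} it suffices that $\widetilde{X} = \kappa F^* X$ is again a nonvanishing conformal Killing field. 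By Lemma~\ref{lemma_ck_euclidean} it then admits a normal-form expansion, so the task reduces to extracting the coefficients $(\tilde{\alpha}, \tilde{c}, \widetilde{B}, \tilde{\gamma})$.

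For each family I would compute $\kappa F^* X = \kappa\, dF^t (X\circ F)$ explicitly, substitute the normal form of $X$, re-expand as a function of $x$, and match the quadratic, linear, and constant parts against the normal form of $\widetilde{X}$. For a translation $F(x) = x - x_0$ one has $\kappa = 1$ and $dF = I$, so this is the substitution $X(x-x_0)$; expanding the five terms and regrouping yields the stated coefficients, where the skew-symmetric part of the linear term is exactly $\alpha\wedge x_0 = \alpha x_0^t - x_0\alpha^t$ and the scalar part contributes $c - \alpha\cdot x_0$. The scaling $F(x) = x/r$ has $\kappa = r^2$, $dF = \tfrac1r I$, giving $\widetilde{X} = r\,X(x/r)$, read off directly. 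The orthogonal case $F(x) = Rx$ has $\kappa = 1$, $dF = R$, and the identities $R^tR = I$, $\abs{Rx} = \abs{x}$, $\alpha\cdot Rx = (R^t\alpha)\cdot x$ immediately produce the conjugated coefficients.

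The inversion $F(x) = x/\abs{x}^2$ is the one genuinely delicate computation and where I expect the main effort. Here $\kappa = \abs{x}^4$ and $dF(x) = \abs{x}^{-2} R(x)$ with $R(x) = I - 2\abs{x}^{-2} x x^t$ a symmetric reflection, so $\widetilde{X} = \abs{x}^2 R(x)\, X(u)$ with $u = x/\abs{x}^2$. The computation hinges on three simplifications: any vector parallel to $x$ (in particular $u$ and $(\alpha\cdot u)u$) is negated by $R(x)$, since $R(x)x = -x$; the skew-symmetry of $B$ forces $x^t B x = 0$, so $R(x)Bu = Bu$ and the rotational part survives unchanged; and the contributions of $(\alpha\cdot u)u$ and $-\tfrac12\alpha\abs{u}^2$ produce $\abs{x}^{-2}(\alpha\cdot x)x$ terms of opposite sign that cancel. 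After these cancellations the expression collapses to $-\tfrac12\alpha - cx + Bx + \abs{x}^2\gamma - 2(\gamma\cdot x)x$, and matching against the normal form requires reading $\abs{x}^2\gamma$ as the $-\tfrac12\tilde{\alpha}\abs{x}^2$ term, which forces $\tilde{\alpha} = -2\gamma$; the companion quadratic term $(\tilde{\alpha}\cdot x)x$ then correctly accounts for $-2(\gamma\cdot x)x$. This yields $(\tilde{\alpha}, \tilde{c}, \widetilde{B}, \tilde{\gamma}) = (-2\gamma, -c, B, -\tfrac12\alpha)$.

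Finally, part (e) requires no pullback: from $\tilde{\varphi} = k^{-1}\varphi$ one gets $d\tilde{\varphi} = k^{-1}d\varphi$ and $\abs{d\tilde{\varphi}}_e^2 = k^{-2}\abs{d\varphi}_e^2$, so $\widetilde{X} = k X$ and every coefficient scales by $k$. The one conceptual point worth stating carefully is the norm-scaling identity $\abs{F^*\omega}_e^2 = \kappa^{-1} F^*(\abs{\omega}_e^2)$ in the first paragraph, since it is exactly the extra factor of $\kappa$ that makes $\widetilde{X} = \kappa F^* X$ land back in the normal form of Lemma~\ref{lemma_ck_euclidean}; the remaining work is the bookkeeping above, with the inversion being the only case that is not essentially immediate.
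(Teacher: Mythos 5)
Your proposal is correct and follows essentially the same route as the paper: both reduce everything to the formula $\widetilde{X} = \kappa\,(DF)^t (X\circ F)$ and then read off coefficients case by case, with the identical key simplifications in the inversion case ($R(x)x=-x$, the vanishing of $x^t B x$ by skew-symmetry, and the cancellation of the $|x|^{-2}(\alpha\cdot x)x$ terms), arriving at the same collapsed expression $-\tfrac12\alpha - cx + Bx + |x|^2\gamma - 2(\gamma\cdot x)x$. The only minor difference is that the paper verifies intrinsically, via conformal-change formulas for the covariant derivative, that $\kappa F^*\omega$ is conformal Killing for \emph{any} conformal Killing $1$-form $\omega$, whereas you obtain this for $\widetilde{X}$ by combining the norm identity $|F^*\omega|_e^2 = \kappa^{-1}F^*(|\omega|_e^2)$ with the conformal invariance of the LCW property and Lemma \ref{lemma_lcw_dksau}; both justifications are sound.
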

\begin{proof}
The formulas in (a)--(d) hold for any conformal Killing $1$-form (not necessarily arising from an LCW). First note that if $\omega$ is a conformal Killing $1$-form in $(M,g)$ and if $F: (\tilde{M}, \tilde{g}) \to (M,g)$ satisfies $\kappa F^* g = \tilde{g}$ for some smooth positive $\kappa$, then $\kappa F^* \omega$ is a conformal Killing $1$-form in $(\tilde{M}, \tilde{g})$. To see this, write $\mu = F^{-*} \kappa$ and compute the total covariant derivative
\[
\nabla^{\tilde{g}} (\kappa F^* \omega) = \nabla^{F^*(\mu g)} F^*(\mu \omega) = F^* (\nabla^{\mu g} (\mu \omega)).
\]
Now the general identities (valid for any $1$-form $\omega$),
\begin{align*}
\nabla^{e^{2\lambda} g} \omega &= \nabla^g \omega - d\lambda \otimes \omega - \omega \otimes d\lambda + \langle d\lambda, \omega \rangle g, \\
\nabla^g(\mu \omega) &= \mu \nabla^g \omega + d\mu \otimes \omega
\end{align*}
and the properties of $\omega$ and $F$ imply that the symmetric part of $\nabla^{\tilde{g}}(\kappa F^* \omega)$ is a multiple of $\tilde{g}$, so $\kappa F^* \omega$ is indeed a conformal Killing $1$-form.

Now if $X$ is conformal Killing in a subset of $\mR^n$ and $\widetilde{X}$ is as above, then by Lemma \ref{lemma_ck_euclidean} $\widetilde{X}$ has the required representation in terms of $\tilde{\alpha}, \tilde{c}, \tilde{B}, \tilde{\gamma}$. If we identify $X$ and $\widetilde{X}$ with the corresponding vector fields in Cartesian coordinates, we have
\[
\widetilde{X} = \kappa (DF)^t (X \circ F), \qquad \kappa (DF)^t DF = I, \qquad \kappa = |\det DF|^{-2/n}
\]
where $DF = (\partial_k F_j)_{j,k=1}^n$. The claims (a)--(d) are a consequence of the following facts and short computations: \\

\noindent (a) If $F(x) = x-x_0$, then $\widetilde{X}(x) = X(x-x_0)$. \\[8pt]
(b) If $F(x) = x/r$, then $\widetilde{X}(x) = r X(x/r)$. \\[8pt]
(c) If $F(x) = Rx$, then $\widetilde{X}(x) = R^t X(R x)$. \\[8pt]
(d) If $F(x) = \frac{x}{\abs{x}^2}$, then $DF(x) = \frac{1}{\abs{x}^2}(I - 2 \hat{x} \otimes \hat{x})$ where $\hat{x} = \frac{x}{\abs{x}}$, so $\kappa = \abs{x}^4$ and
\begin{align*}
\widetilde{X}(x) &= \abs{x}^2 (I - 2 \hat{x} \otimes \hat{x}) X(\frac{x}{\abs{x}^2}) \\
 &= (I - 2 \hat{x} \otimes \hat{x}) \left[ \frac{1}{\abs{x}^2} ( (\alpha \cdot x) x - \frac{1}{2} \abs{x}^2 \alpha) + cx + Bx + \abs{x}^2 \gamma \right] \\
 &= \frac{1}{\abs{x}^2} ( (\alpha \cdot x) x - \frac{1}{2} \abs{x}^2 \alpha) + cx + Bx + \abs{x}^2 \gamma \\
 &\qquad - 2 \left[ \frac{1}{\abs{x}^2} ( (\alpha \cdot x) x - \frac{1}{2} (\alpha \cdot x) x) + cx + (Bx \cdot \hat{x}) \hat{x} + (\gamma \cdot x) x \right].
\end{align*}
Here $Bx \cdot \hat{x} = 0$ because of skew-symmetry. Thus
\[
\widetilde{X}(x) = -\frac{1}{2} \alpha - cx + Bx + \abs{x}^2 \gamma - 2(\gamma \cdot x) x.
\]

Finally, if $\varphi$ is an LCW corresponding to $X = \frac{d\varphi}{|d\varphi|_e^2}$ and $F$ if is conformal, it follows that $\tilde{\varphi} = F^* \varphi$ is an LCW corresponding to $\widetilde{X} = \frac{d\tilde{\varphi}}{|d\tilde{\varphi}|_e^2} = \kappa F^* X$. Part (e) is trivial.
\end{proof}

\section{Classification of LCWs in the Euclidean space} \label{sec4}

We will now give the new proof of the classification of LCWs in $\mR^n$, $n \geq 3$ (Theorem \ref{thm_lcw_rn_classification}), and the orbits of the action by precomposition with a conformal transformation and post composition with a linear function (Theorem \ref{thm_lcw_rn_orbits}).


Before the proofs, we give two lemmas. The first lemma is a simple consequence of the equation $d(|X|^{-2} X) = 0$ that holds for conformal Killing vector fields $X$ associated to LCWs.

\begin{Lemma} \label{lemma_dx_wedge_x}
If a nonvanishing $1$-form $X$ satisfies $d(|X|^{-2} X) = 0$, then
\begin{align*}
dX \wedge X = 0.
\end{align*}
\end{Lemma}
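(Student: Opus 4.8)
The plan is to reduce everything to the Leibniz rule for the exterior derivative together with the elementary fact that a $1$-form wedged with itself vanishes. Set $f = \abs{X}^{-2}$. Since $X$ is assumed nonvanishing, $f$ is a strictly positive smooth function, and in particular $f$ never vanishes, which is the only hypothesis I will really need about it. The starting point is the assumption $d(fX) = 0$, which by the Leibniz rule for $d$ acting on the product of a function and a $1$-form expands as
\[
df \wedge X + f\, dX = 0.
\]

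Next I would wedge this identity on the right with $X$. This gives
\[
df \wedge X \wedge X + f\, (dX \wedge X) = 0.
\]
The key observation is that $X \wedge X = 0$ because $X$ is a $1$-form (antisymmetry of the wedge product on odd-degree forms forces $X \wedge X = -X \wedge X$, hence it vanishes). Therefore the first term is identically zero, leaving
\[
f\, (dX \wedge X) = 0.
\]

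Finally, since $f = \abs{X}^{-2}$ is nowhere zero, I may divide by $f$ pointwise to conclude $dX \wedge X = 0$, as claimed. I do not anticipate any genuine obstacle here: the only point requiring a moment's care is that we are allowed to divide by $f$, which is guaranteed precisely by the nonvanishing hypothesis on $X$. Everything else is the Leibniz rule and the triviality $X \wedge X = 0$, so the argument is a two-line computation rather than anything requiring a delicate estimate.
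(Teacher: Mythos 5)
Your proof is correct and follows essentially the same route as the paper: both expand $d(\abs{X}^{-2}X)=0$ via the Leibniz rule and then wedge with $X$, using $X \wedge X = 0$ to kill the term involving $d(\abs{X}^{-2})$. The only cosmetic difference is that the paper computes $d(\abs{X}^{-2})$ explicitly (and records two further identities it extracts from the same equation), whereas you leave $df$ unevaluated since it is annihilated anyway; the content is identical.
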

\begin{proof}
We compute
\[
d(|X|^{-2} X) = |X|^{-2} dX - |X|^{-4} d(|X|^2) \wedge X.
\]
Thus $d(|X|^{-2} X) = 0$ if and only if
\[
dX = |X|^{-2} d(|X|^2) \wedge X.
\]
This means that the two-form $dX$ is proportional to a wedge product of $1$-forms. In particular, one obtains the three equations
\begin{align*}
dX \wedge X &= 0, \\
dX \wedge d(|X|^2) &= 0, \\
|d(|X|^2) \wedge X| &= |X|^2 |dX|.
\end{align*}
(Conversely, one can show that in any connected manifold these three equations imply that $dX = \pm |X|^{-2} d(|X|^2) \wedge X$.)
\end{proof}

Next we use the equation in Lemma \ref{lemma_dx_wedge_x} to derive some restrictions to the parameters of $X = (\alpha, c, B, \gamma)$ arising from an LCW.

\begin{Lemma} \label{lemma_lcw_parameter_conditions}
Let $\varphi$ be an LCW in $\Omega \subset \mR^n$, $n \geq 3$. The corresponding conformal Killing vector field $X = (\alpha, c, B, \gamma)$ satisfies
\begin{align*}
B \wedge \gamma &= 0, \\
cB &= \alpha \wedge \gamma
\end{align*}
where $B = b_{kj} \,dx_j \wedge dx_k$.
\end{Lemma}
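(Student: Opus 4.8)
The plan is to feed the explicit formula for $X$ into the single identity $dX\wedge X=0$ furnished by Lemma \ref{lemma_dx_wedge_x} (here $X$ is read as the Euclidean $1$-form with the given components) and then to separate the resulting $3$-form into its homogeneous pieces in $x$. First I would compute, from $X(x)=(\alpha\cdot x)x-\frac12\alpha\abs{x}^2+cx+Bx+\gamma$, that
\[
dX=2\,\alpha^\flat\wedge x^\flat+\omega_B,\qquad \omega_B:=d\bigl((Bx)^\flat\bigr),
\]
where $x^\flat=\sum_j x_j\,dx_j$ and $\omega_B$ is the constant $2$-form attached to $B$ (in the normalization of the statement, $\omega_B$ is exactly the $2$-form $b_{kj}\,dx_j\wedge dx_k$). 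Correspondingly I split $X=P_2+P_1+P_0$ into homogeneous parts, with $P_2=(\alpha\cdot x)x^\flat-\frac12\abs{x}^2\alpha^\flat$, $P_1=c\,x^\flat+(Bx)^\flat$ and $P_0=\gamma^\flat$.

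Expanding $0=dX\wedge X$ and grouping by degree, each homogeneous component must vanish separately. The degree-zero component is $\omega_B\wedge\gamma^\flat$, and its vanishing is precisely $B\wedge\gamma=0$. The degree-one component is
\[
\bigl(-2\,\alpha^\flat\wedge\gamma^\flat+c\,\omega_B\bigr)\wedge x^\flat+\omega_B\wedge(Bx)^\flat=0,
\]
and the remaining identity $cB=\alpha\wedge\gamma$ should come out of this. The main obstacle is the cross term $\omega_B\wedge(Bx)^\flat$: unlike the bracketed term it is not of the shape (constant $2$-form)$\wedge x^\flat$, so it cannot simply be absorbed and must be disentangled before one can read off a relation between the coefficients. (The degree-two and degree-three components produce further constraints, e.g.\ that $B$ is decomposable, which are not needed here.)

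To handle the cross term I would use the interior-product dictionary $(Bx)^\flat=-\iota_x\beta_B$ for the $2$-form $\beta_B$ of $B$, together with $\omega_B=-2\beta_B$ and the Leibniz identity $\iota_x(\beta_B\wedge\beta_B)=2\,\beta_B\wedge\iota_x\beta_B$, which recast the cross term as $\omega_B\wedge(Bx)^\flat=\iota_x(\beta_B\wedge\beta_B)$. Applying $\iota_x$ to the whole degree-one equation then annihilates this term because $\iota_x\iota_x=0$, and writing $\eta:=-2\,\alpha^\flat\wedge\gamma^\flat+c\,\omega_B$ I am left with
\[
(\iota_x\eta)\wedge x^\flat+\abs{x}^2\eta=0\qquad\text{for all }x.
\]
Finally I would argue that for $n\ge 3$ this forces $\eta=0$: the summand $(\iota_x\eta)\wedge x^\flat$ is decomposable, hence of rank at most $2$, so it cannot equal $-\abs{x}^2\eta$, which has the same rank as $\eta$, unless $\eta$ has rank at most $2$; and if $\eta=u\wedge v\neq 0$ one may choose $x\neq 0$ orthogonal to $\mathrm{span}(u,v)$ (possible since $n>2$), getting $\iota_x\eta=0$ and hence $\abs{x}^2\eta=0$, a contradiction. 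Thus $c\,\omega_B=2\,\alpha^\flat\wedge\gamma^\flat$, which is exactly $cB=\alpha\wedge\gamma$ under the $2$-form/matrix conventions fixed in the statement.
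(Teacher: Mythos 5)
Your proof is correct, and it follows the paper's skeleton up to the decisive moment: both arguments feed the explicit form of $X$ into the identity $dX \wedge X = 0$ of Lemma \ref{lemma_dx_wedge_x}, expand in powers of $x$, and read off $B \wedge \gamma = 0$ from the constant term. The divergence is exactly at the cross term $\omega_B \wedge (Bx)^\flat$ (in the paper's notation, $B \wedge i_{p^{\sharp}} B$) in the linear-in-$x$ component, and here your route is genuinely different --- and in fact more careful than the paper's. The paper disposes of this term by asserting $0 = i_{p^{\sharp}}(B \wedge B)$ as a ``property of the interior product''; but that presupposes $B \wedge B = 0$, which is automatic only when $n=3$ or when $B$ is decomposable (for the $2$-form $dx_1 \wedge dx_2 + dx_3 \wedge dx_4$ in $\mR^4$, the contraction $i_{p^\sharp}(B\wedge B)$ is nonzero for generic $p$). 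The paper's step can be repaired --- the proof of Lemma \ref{lemma_dx_wedge_x} shows that $dX$ is pointwise decomposable, so $dX \wedge dX = 0$, whose degree-zero part yields $B \wedge B = 0$ --- but as written it is a gap in dimensions $n \geq 4$. Your argument sidesteps the issue entirely: applying $\iota_x$ to the whole degree-one identity kills the cross term for free, since it lies in the image of $\iota_x$ and $\iota_x \iota_x = 0$; the price is the weaker equation $(\iota_x \eta) \wedge x^\flat + \abs{x}^2 \eta = 0$, which you then resolve correctly by the rank/decomposability argument (this is where $n \geq 3$ enters for you, playing the role of the paper's evaluation of $(cB - \alpha \wedge \gamma) \wedge p = 0$ at $x = e_j$). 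So your version is self-contained and actually closes a hole in the published write-up, at the cost of one extra final step. One quibble: your parenthetical attributing the decomposability of $B$ to the degree-two and degree-three components is off --- the degree-three component vanishes identically, and the degree-two one is vacuous when $\alpha = 0$; decomposability of $B$ instead follows from the degree-one identity once $\eta = 0$ (equivalently from $dX \wedge dX = 0$) --- but since you flag that remark as unneeded, it does not affect the proof.
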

\begin{proof}
We write $X = X_k \,dx_k$, where
\[
X_k = (\alpha_j x_j) x_k - \frac{1}{2} \alpha_k |x|^2 + c x_k + b_{kj} x_j + \gamma_k.
\]
It follows that
\begin{align*}
dX &= dX_k \wedge dx_k \\
 &= (\alpha_j x_k - \alpha_k x_j + b_{kj}) \,dx_j \wedge dx_k \\
 &= \alpha \wedge p + B
\end{align*}
where $\alpha = \alpha_j \,dx_j$, $p = x_j \,dx_j$ and $B = b_{kj} \,dx_j \wedge dx_k$. Similarly, in this notation,
\[
X = \langle \alpha, p \rangle p - \frac{1}{2} |p|^2 \alpha + c p + i_{p^{\sharp}} B + \gamma.
\]
Thus Lemma \ref{lemma_dx_wedge_x} gives that
\begin{align*}
0 &= dX \wedge X \\
 &= \alpha \wedge p \wedge (i_{p^{\sharp}} B + \gamma) + B \wedge (\langle \alpha, p \rangle p - \frac{1}{2} |p|^2 \alpha + c p + i_{p^{\sharp}} B + \gamma).
\end{align*}
The components of this $3$-form are second order polynomials in $x$ and vanish for all $x \in \Omega$, hence also for all $x \in \mR^n$. Evaluating at $x=0$ yields the equation
\[
B \wedge \gamma = 0.
\]
Moreover, looking at first order terms in $x$ gives that
\[
\alpha \wedge p \wedge \gamma + B \wedge (cp + i_{p^{\sharp}} B) = 0.
\]
By the properties of the interior product we have $B \wedge i_{p^{\sharp}} B = 0$. In fact,
\[
0 = i_{p^{\sharp}}(B \wedge B) = (i_{p^{\sharp}} B) \wedge B + B \wedge i_{p^{\sharp}} B = 2 B \wedge i_{p^{\sharp}} B.
\]
Hence $(cB - \alpha \wedge \gamma) \wedge p = 0$, and evaluating at $x = e_j$ for $1 \leq j \leq n$ we obtain
\[
cB = \alpha \wedge \gamma. \qedhere
\]
\end{proof}

\begin{proof}[Proof of Theorem \ref{thm_lcw_rn_classification}]
Let $\varphi$ be an LCW in a connected open set $\Omega \subset \mR^n$, $n \geq 3$, and let $X = (\alpha, c, B, \gamma)$ be the corresponding conformal Killing vector field. We divide the argument in several cases. In each case, we will use Lemma \ref{lemma_conformal_group_action} (in fact only the translation property in part (a)) to reduce conformal Killing vector fields to simpler ones that can be realized by explicit LCWs. We omit the routine computations required to show that $d(|\nabla \varphi|^{-2} \nabla \varphi) = 0$ for each of the functions $\varphi$ given below (so that indeed these functions are LCWs). \\

\noindent {\it Case 1.} $\alpha = 0$. \\

Since $\alpha = 0$, from Lemma \ref{lemma_lcw_parameter_conditions} we obtain that $cB = 0$. This gives two subcases. \\

\noindent {\it Case 1a.} $\alpha = 0$ and $B = 0$. \\

Then $X = (0, c, 0, \gamma)$. If $c = 0$, then $\gamma \neq 0$ (since $X$ is nonvanishing) and $X = \gamma$ is realized by the LCW $\varphi(x) = \abs{\gamma}^{-2} \gamma \cdot x$. On the other hand, if $c \neq 0$, after a translation $F(x) = x - \gamma/c$ we may bring $X$ to the form $(0,c,0,0)$, and this vector field is realized by the LCW $\varphi(x) = c^{-1} \log |x|$. \\

\noindent {\it Case 1b.} $\alpha = 0$ and $c = 0$. \\

In this case $X = (0, 0, B, \gamma)$. First suppose that $\gamma = 0$. Since then $B \neq 0$, we can choose a vector $x_0$ with $Bx_0 \neq 0$ and apply the translation $F(x) = x + x_0$ to bring $X$ to the form $(0, 0, B, Bx_0)$. We may thus assume that $\gamma \neq 0$. If $B = 0$ we are again in Case 1a, so we may assume that $B \neq 0$. From Lemma \ref{lemma_lcw_parameter_conditions} we have $B \wedge \gamma = 0$, which implies that $B = \gamma \wedge \sigma$ for some vector $\sigma \neq 0$, and we may further assume that $\sigma$ is orthogonal to $\gamma$. After a translation $F(x) = x - \sigma/|\sigma|^2$, $X$ becomes $(0, 0, a \hat{\gamma} \wedge \hat{\sigma}, 0)$ where $a = \abs{\gamma} \abs{\sigma} > 0$. This is realized by the LCW $\varphi(x) = a^{-1} \mathrm{arctan} \frac{\hat{\gamma} \cdot x}{\hat{\sigma} \cdot x}$. \\

\noindent {\it Case 2.} $\alpha \neq 0$. \\

In this case $X = (\alpha, c, B, \gamma)$. If $c \neq 0$, we first use the translation $F(x) = x - c |\alpha|^{-2} \alpha$ to reduce to the case $X = (\alpha, 0, B, \gamma)$ for some new $\gamma$. If $\gamma = 0$, then $X = (\alpha, 0, B, 0)$, and if $x_0$ is a vector with $\alpha \cdot x_0 = 0$ we apply the translation $F(x) = x + x_0$ to bring $X$ to the form $(\alpha, 0, \tilde{B}, B x_0 - \frac{1}{2} |x_0|^2 \alpha )$ for some skew-symmetric $\tilde{B}$. If $|x_0|$ is large enough one has $B x_0 - \frac{1}{2} |x_0|^2 \alpha \neq 0$. This means that we may assume that $X$ is of the form $(\alpha, 0, B, \gamma)$ for some $\gamma \neq 0$.

Now, let $X = (\alpha, 0, B, \gamma)$ with $\gamma \neq 0$. By Lemma \ref{lemma_lcw_parameter_conditions} we have $cB = \alpha \wedge \gamma$, and since $c = 0$ this implies that $\gamma = r\alpha$ for some $r \in \mR \setminus \{0\}$. Thus we may assume that $X = (\alpha, 0, B, r\alpha)$ for some $r \neq 0$. By Lemma \ref{lemma_lcw_parameter_conditions} again we have $B \wedge r\alpha = 0$, which implies that $B = \alpha \wedge \sigma$ for some vector $\sigma$ where we may assume that $\alpha \cdot \sigma = 0$. Thus we have $X = (\alpha, 0, \alpha \wedge \sigma, r\alpha)$. After a translation $F(x) = x + \sigma$, the vector field reduces to $X = (\alpha, 0, 0, s \alpha)$ where $s = r+\frac{1}{2} |\sigma|^2$.

It remains to find LCWs that realize $X = (\alpha, 0, 0, s\alpha)$ for different values of $s$:
\begin{itemize}
\item If $s > 0$, the LCW $\varphi(x) = \frac{1}{\sqrt{s} |\alpha|} \,\mathrm{arctan}\,\frac{-2\hat{\alpha} \cdot x/\sqrt{s}}{|x|^2/s - 1}$ corresponds to $X = (\alpha, 0, 0, \frac{s}{2}\alpha)$.
\item If $s = 0$, the LCW $\varphi(x) = -\frac{2}{|\alpha|} \hat{\alpha} \cdot \frac{x}{|x|^2}$ corresponds to $X = (\alpha, 0, 0, 0)$.
\item If $s > 0$, the LCW $\varphi(x) = \frac{1}{\sqrt{s} |\alpha|} \,\mathrm{arctanh}\,\frac{-2\hat{\alpha} \cdot x/\sqrt{s}}{|x|^2/s + 1}$ yields the vector field $X = (\alpha, 0, 0, -\frac{s}{2} \alpha)$.
\end{itemize}
This concludes the proof.
\end{proof}

\begin{proof}[Proof of Theorem \ref{thm_lcw_rn_orbits}]
We have already shown that precomposition with a translation $F(x)=x-x_0$ and post-composition with a linear function reduces any LCW to one of the six families of LCWs in Theorem \ref{thm_lcw_rn_classification}.
After composing with a rotation $R$, we can take $\gamma=e_1$ and $\sigma=e_2$.
The scalar parameter $s>0$ can be reduced to $1$ by a combination of composition with a dilation of scale $r=\sqrt{s}$ and post-multiplication by a constant $k=\frac{1}{\sqrt{s}}$.
Therefore, any LCW on $\mR^n$ is reduced to one of the following six representative LCWs:

\[
\begin{array}{lll}
e_1 \cdot x, &\quad
\log |x|, &\quad
\mathrm{arctan} \,\frac{e_1 \cdot x}{e_2 \cdot x}, \\[7pt]
- 2 \frac{e_1 \cdot x}{|x|^2}, &\quad
\mathrm{arctanh}\,\frac{-2e_1 \cdot x}{|x|^2 + 1}, &\quad
\mathrm{arctan}\,\frac{-2 e_1 \cdot x}{|x|^2 - 1}.
\end{array}
\]

Observe that up to this moment, to arrive to these six representatives, we have only used affine conformal transformations.
Using also the inversion, we will show that the LCWs in the same column in the above listing belong to the same orbit.

Composition of $e_1\cdot x$ with the inversion $F(x)=x/|x|^2$ gives $\frac{e_1 \cdot x}{|x|^2}$.
The relation between $\log|x|$ and $\mathrm{arctanh}\,\frac{-2e_1 \cdot x}{|x|^2 + 1}$ is seen more easily looking at the corresponding conformal Killing vector fields:
\begin{itemize}
  \item The translation $F(x) = x-e_1$ acts on $(0,1,0,0)$ (which corresponds to $\log|x|$) and gives $(0,1,0,-e_1)$.
  \item The inversion $F(x)=x/|x|^2$ acts on $(0,1,0,-e_1)$ and gives $(2e_1,-1,0,0)$.
  \item The translation $F(x) = x+\frac{1}{2} e_1$ acts on  $(2e_1,-1,0,0)$ and gives  $(2e_1,0,0,-\frac{1}{4}e_1)$.
  \item The dilation $F(x) = x/2$ acts on $(2e_1,0,0,-\frac{1}{4}e_1)$ and gives $(e_1,0,0,-\frac{1}{2}e_1)$, which corresponds to $\mathrm{arctanh}\,\frac{-2e_1 \cdot x}{|x|^2 + 1}$.
\end{itemize}
A similar argument works for the $\mathrm{arctan}$ weights.

It remains to prove that the three remaining orbits are different.
Let us assume that $\varphi=e_1\cdot x$ is $b+a(\psi\circ F)$, for $\psi=\log|x|$ and $F$ a conformal transformation in $\Omega\subset\mR^n$.
According to Liouville's theorem (see e.g \cite[Theorem 2.3.1]{IwaniecMartin}), $F$ is the restriction to $\Omega$ of:
\[
  F(x) = x_1+{\frac  {\alpha A(x-x_0)}{|x-x_0|^{\epsilon }}}
\]
for $x_0\in\mR^n\setminus\Omega,x_1\in\mR^n$, an orthogonal matrix $A$, $\alpha\in\mR\setminus\{ 0\}$, $\epsilon\in\{0,2\}$.

This is a composition of elementary transformations whose effect we know thanks to Lemma \ref{lemma_conformal_group_action}, so we can apply them to the conformal Killing vector field $X=(0,1,0,0)$ that corresponds to $\psi$.
We will show that $F^\ast X$ is never equal to the conformal Killing vector field of $\varphi$, which is $X=(0,0,0,e_1)$.

Let us consider the case of $\epsilon=0$ first:
\begin{itemize}
  \item The translation $F(x) = x-x_0$ acting on $(0,1,0,0)$ gives $(0,1,0,-x_0)$.
  \item The rotation $F(x)=Rx$ acting on $(0,1,0,-x_0)$ gives $(0,1,0,-R^tx_0)$.
  \item The dilation $F(x)=x/r$ acting on $(0,1,0,-R^tx_0)$ gives $(0,1,0,-rR^tx_0)$.
  \item Finally, the translation $F(x) = x-x_1$ acting on $(0,1,0,-rR^tx_0)$ must give a multiple of $(0,0,0,e_1)$, but the constant $\widetilde{c}=c=1$.
\end{itemize}

If $\epsilon=2$:
\begin{itemize}
  \item The translation $F(x) = x-x_0$ acting on $(0,1,0,0)$ gives $(0,1,0,-x_0)$.
  \item The inversion $F(x)=x/|x|^2$ acting on $(0,1,0,-x_0)$ gives $(2x_0,-1,0,0)$.
  \item The rotation $F(x)=Rx$ acting on $(2x_0,-1,0,0)$ gives $(2R^tx_0,-1,0,0)$.
  \item The dilation $F(x)=x/r$ acting on $(2R^tx_0,-1,0,0)$ gives $((2/r)R^tx_0,-1,0,0)$.
  \item Finally, the translation $F(x) = x-x_1$ acting on $((2/r)R^tx_0,-1,0,0)$ must give a multiple of $(0,0,0,e_1)$, which implies that $(2/r)R^tx_0=0$, and $\widetilde{c}$ cannot be zero.
\end{itemize}

We have proven that the orbits of $e_1\cdot x$ and $\log|x|$ by the conformal group are different.
Analogous computations show that $\mathrm{arctan} \,\frac{e_1 \cdot x}{e_2 \cdot x}$ also belongs to a different orbit.
Postcomposition with a linear function simply scales the conformal Killing vector field, and this action commutes with precomposition with a conformal transformation, so this action cannot mix the orbits.
\end{proof}

\begin{Remark}
The fact that there are only three orbits for the LCWs under the action of the conformal group can also be seen directly as follows. Clearly 
\[
\frac{e_1 \cdot x}{|x|^2} = F^*(e_1 \cdot x), \qquad F(x) = \frac{x}{|x|^2}.
\]
Next define the conformal map 
\[
F(x) = \frac{x-e_1}{|x-e_1|^2} + \frac{1}{2} e_1.
\]
It is easy to see that 
\[
F^*(\mathrm{arctan} \,\frac{e_2 \cdot x}{e_1 \cdot x})(x) = \mathrm{arctan} \,\frac{2e_2 \cdot x}{|x|^2-1}.
\] 
Moreover, since $|F(x)|^2 = \frac{|x+e_1|^2}{4 |x-e_1|^2}$, one has 
\[
F^*(\log\,2|x|) = \frac{1}{2} \log\,\frac{|x+e_1|^2}{|x-e_1|^2} = \frac{1}{2} \log\,\frac{1+\frac{2x_1}{|x|^2+1}}{1-\frac{2x_1}{|x|^2+1}} = \mathrm{arctanh} \,\frac{2x_1}{|x|^2+1}.
\]
\end{Remark}

\section{On orthogonal LCWs} \label{sec5}

In this section, we show how the existence of several orthogonal LCWs results in a special conformal product structure.

\begin{defn}
We say that the LCWs $\varphi_1, \ldots, \varphi_m$ for $g$ are \emph{orthogonal} if their gradients $\mathrm{grad}_g(\varphi_1), \ldots, \mathrm{grad}_g(\varphi_m)$ are orthogonal with respect to $g$.
\end{defn}

\begin{thm}
Let $(M,g)$ be an $n$-dimensional  Riemannian manifold. Let $p\in M$, and suppose that in an open neighbourhood of $p$, the conformal class $[g]$ admits $m$ orthogonal LCWs  $\varphi_1,\dots,\varphi_m$. Then there is a set of coordinates about $p$, $\Phi=(z_1,\dots, z_n)$, such that
\begin{enumerate}
\item $z_1=\varphi_1,\dots, z_m=\varphi_m$;
\item some conformal multiple of $g$ has the local expression
\[
\left[\begin{array}{ccccc}
1 & 0 & \dots & 0  & 0\\
0 & f_2(z_{m+1},\dots,z_n) & \dots & 0 & 0\\
 &  & \vdots &  & \\
0 & 0 & \dots & f_m(z_{m+1},\dots,z_n) & 0\\
0 & 0 & \dots & 0 & D(z_{m+1},\dots,z_n)\\
\end{array}\right]
\]
where $D$ is an $(n-m)\times (n-m)$ symmetric matrix.
\end{enumerate}
Conversely, if in some local coordinates $(z_1, \ldots, z_n)$ the metric has the above form up to a conformal multiple, then $z_1, \ldots, z_m$ are orthogonal LCWs.
\end{thm}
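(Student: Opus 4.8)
The plan is to work throughout with the conformally invariant vector fields $X_i := \abs{\nabla\varphi_i}^{-2}\nabla\varphi_i$ attached to the LCWs by Lemma \ref{lemma_lcw_dksau}. A short computation shows $X_i$ is independent of the representative chosen in $[g]$, and that the orthogonality hypothesis is conformally invariant, so $\langle X_i,X_j\rangle=0$ for $i\neq j$ and the $X_i$ are everywhere linearly independent (each $X_i\neq 0$); moreover $X_i(\varphi_j)=\abs{\nabla\varphi_i}^{-2}\langle\nabla\varphi_i,\nabla\varphi_j\rangle=\delta_{ij}$. After shrinking the neighbourhood of $p$ to a simply connected one, the whole statement follows once I produce a conformal representative $\hat g\in[g]$ and coordinates $(z_1,\dots,z_n)$ with $z_i=\varphi_i$ for $i\le m$ in which simultaneously (i) $X_i=\partial_{z_i}$ for $i\le m$, and (ii) each $X_i$ is a \emph{Killing} field for $\hat g$. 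Indeed, (ii) forces $\hat g$ to be independent of $z_1,\dots,z_m$; orthogonality gives $\hat g(X_i,X_j)=0$ for $i\neq j\le m$; and the identity $\hat g(X_i,\partial_{z_b})=\abs{\nabla\varphi_i}^{-2}\partial_{z_b}\varphi_i=0$ for $b>m$ (using $z_i=\varphi_i$) kills the off-diagonal blocks. Thus $\hat g=\mathrm{diag}(f_1,\dots,f_m,D)$ with $f_i=\abs{\nabla_{\hat g}\varphi_i}_{\hat g}^{-2}$ and $D$ functions of $z_{m+1},\dots,z_n$ only.

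For (ii) and the normalization $f_1=1$ I take $\hat g:=\abs{\nabla_g\varphi_1}_g^2\,g$. Then $\abs{\nabla_{\hat g}\varphi_1}_{\hat g}=1$, so $X_1=\nabla_{\hat g}\varphi_1$ is a unit gradient field; writing the conformal Killing equation for $X_1$ as $2\,\mathrm{Hess}\,\varphi_1=\lambda\,\hat g$ and pairing with $\nabla_{\hat g}\varphi_1$, the left side equals $\nabla_{\nabla\varphi_1}\nabla\varphi_1=\tfrac12\nabla\abs{\nabla\varphi_1}^2=0$, forcing $\lambda=0$, i.e.\ $X_1$ is Killing. Consequently $\hat g$ splits in semigeodesic coordinates as $dz_1^2\oplus h$ with $z_1=\varphi_1$ and $h$ independent of $z_1$ (this is the $m=1$ case, the CTA splitting of \cite{DKSaU}), and in particular $f_1=\hat g(X_1,X_1)=1$. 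The remaining $\varphi_j$ are $z_1$-independent (their gradients are orthogonal to $\nabla_{\hat g}\varphi_1=\partial_{z_1}$), so each $X_j$ is tangent to the slices and $z_1$-independent; evaluating its conformal Killing equation on $(X_1,X_1)$ and using $\hat g(X_1,X_1)=1$ and $[X_1,X_j]=0$ shows its conformal factor vanishes, so every $X_j$ is Killing for this single $\hat g$.

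The crux is (i), i.e.\ $[X_i,X_j]=0$, which I prove by induction on $m$. Since brackets are metric independent, I may use the splitting $\hat g=dz_1^2\oplus h$. The fields $X_2,\dots,X_m$ restrict to the slice $(N,h)=\{z_1=\mathrm{const}\}$, where (being $z_1$-independent tangent Killing fields for $\hat g$) they are Killing for $h$, hence conformal Killing; by Lemma \ref{lemma_lcw_dksau} they are exactly the conformally invariant fields of the orthogonal LCWs $\varphi_2|_N,\dots,\varphi_m|_N$ on $N$. By the inductive hypothesis (the theorem for $m-1$ orthogonal LCWs on the $(n-1)$-manifold $N$) these are coordinate fields on $N$, so $[X_i,X_j]=0$ on $N$ for $2\le i,j\le m$; being $z_1$-independent and tangent, the bracket vanishes on all of $M$. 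Together with $[X_1,X_j]=0$ this yields pairwise commutation, the base case $m=1$ being the CTA splitting above. I expect this inductive commutation step to be the only real obstacle; the rest is bookkeeping with the conformal invariance of $X_i$ and the Killing property.

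Granting commutation, the simultaneous flow-box theorem gives coordinates with $X_i=\partial_{w_i}$; replacing $w_1,\dots,w_m$ by $\varphi_1,\dots,\varphi_m$ (a legitimate change of chart since $X_i(\varphi_j)=\delta_{ij}$) produces $(z_1,\dots,z_n)$ with $z_i=\varphi_i$ and still $X_i=\partial_{z_i}$, and reading off $\hat g$ as in the first paragraph completes the forward direction. For the converse, if $\hat g\in[g]$ has the stated block form in coordinates $(z_1,\dots,z_n)$, then for each $i\le m$ one computes $X_i=\abs{\nabla_{\hat g}z_i}^{-2}\nabla_{\hat g}z_i=\partial_{z_i}$, which is Killing (the metric is independent of $z_1,\dots,z_m$) and hence conformal Killing; by Lemma \ref{lemma_lcw_dksau} each $z_i$ is an LCW, and their gradients are orthogonal because the top-left block is diagonal.
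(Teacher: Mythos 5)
Your strategy is genuinely different from the paper's, and most of it is sound: the single gauge $\hat g=\abs{\nabla_g\varphi_1}_g^2\,g$, the proof that $X_1$ is parallel, the splitting $\hat g=dz_1^2\oplus h$, the neat observation that every $X_j$ is then Killing for this one metric $\hat g$, the flow-box/relabelling step, and your converse are all correct. The genuine gap is in the step you yourself call the crux, the induction proving $[X_i,X_j]=0$. Your induction descends in dimension: the case of $m$ LCWs on an $n$-manifold consumes the theorem (and Lemma \ref{lemma_lcw_dksau}) for $m-1$ LCWs on the $(n-1)$-dimensional slice $N$, so the chain $(m,n)\to(m-1,n-1)\to\cdots$ bottoms out at $(1,\,n-m+1)$. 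Whenever $m\geq n-1$ this chain leaves the regime $\dim\geq 3$ to which the paper, and the results of \cite{DKSaU} behind Lemma \ref{lemma_lcw_dksau}, are confined. On a $2$-dimensional slice the notion of LCW degenerates (conformal Killing fields form an infinite-dimensional family, and the lemma's equivalence would identify LCWs with harmonic functions), and neither Lemma \ref{lemma_lcw_dksau} nor the theorem you are inducting on has been established there; for $m=n$ the descent even reaches dimension $1$. These missing cases are not peripheral: $m=2,3$ on $3$-manifolds (Theorem \ref{thm:3-dim_case_several_LCWs}) and $m=n$ (Corollary \ref{cor:every orth LCW}) are exactly the cases the paper relies on.

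The gap can be closed in two ways. The paper never proves pairwise commutation a priori: it works with one gauge $f_jg$ per weight (in which $X_j$ is parallel), proves only that the distribution $\mathrm{span}\{X_1,\ldots,X_m\}$ is integrable via the conformal change formula for the connection, and then applies Frobenius, replacing the first $m$ Frobenius coordinates by $\varphi_1,\ldots,\varphi_m$; the identities $X_j(\varphi_k)=\delta_{jk}$ and $X_j(y_l)=0$ for $l>m$ then force $X_j=\partial_{z_j}$, so commutation is an output of the chart rather than an input, and no induction on dimension is needed. Alternatively, inside your own framework the induction can be deleted entirely: for $j\neq k$, orthogonality gives $X_j(\varphi_k)=0$, hence $0=\mathcal{L}_{X_j}(d\varphi_k)=\mathcal{L}_{X_j}\bigl(\abs{X_k}^{-2}X_k^\flat\bigr)=X_j(\abs{X_k}^{-2})\,X_k^\flat+\abs{X_k}^{-2}\,[X_j,X_k]^\flat$, where the last equality uses that $X_j$ is Killing for $\hat g$ (so $\mathcal{L}_{X_j}$ commutes with lowering the index). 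Thus $[X_j,X_k]=\mu X_k$ with $\mu=\abs{X_k}^{-2}X_j(\abs{X_k}^2)$; but the Killing property also gives $X_j(\abs{X_k}^2)=2\hat g([X_j,X_k],X_k)=2\mu\abs{X_k}^2$, so $\mu=2\mu$, i.e.\ $\mu=0$. This computation is valid in every dimension and for every pair (all your $X_j$ are Killing for $\hat g$, and that step did not use pairwise commutation), and with it the rest of your proof goes through unchanged.
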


\begin{proof}
Let $\varphi_1, \ldots, \varphi_m$ be orthogonal LCWs. Using \cite[Theorem 1.2 and its proof]{DKSaU}, for each $j=1,\dots,m$ there is 
a smooth conformal factor $f_j$ near $p$ such that the gradient
of $\varphi_j$ in the metric $f_j g$
is a \emph{unit parallel vector field} with respect to $f_j g$. In other words, writing 
\[
X_j = \mathrm{grad}_{f_j g}(\varphi_j),
\]
we have $\nabla^{f_j g}{X_j}\equiv 0$ and $f_j g(X_j, X_j) = 1$. After passing to a conformal multiple of $g$, we can assume without loss of generality that $f_1 \equiv 1$.

The gradient of $\varphi_j$ in the metric $g$ is given by $f_j\cdot X_j$, since
\[
\mathrm{grad}_{g}(\varphi_j) = f_j \,\mathrm{grad}_{f_j g}(\varphi_j) = f_j X_j.
\]

Using the formula for the Levi-Civita connection of a conformal metric, we have 
\begin{equation}\label{eqn: connection for a multiple of the metric}
\nabla_X^{f_j g} Y = \nabla_X^g Y + (Xh_j) Y + (Yh_j) X - g(X, Y) \mathrm{grad}_g(h_j)
\end{equation}
where $h_j = \frac{1}{2} \log\,f_j$. Thus in particular 
\[
    0=\nabla^{f_j g}_{X_k}X_j=\nabla^g_{X_k}X_j+X_k (h_j)\cdot X_j+X_j(h_j)\cdot X_k-g(X_j,X_k)\grad_g(h_j).
\]
Since $g(X_j,X_k)=0$ for $j \neq k$, $\nabla^g_{X_k}X_j\in\spann\{X_j,X_k\}$.
Interchanging $j$ and $k$, we get also that
$\nabla^g_{X_j}X_k\in\spann\{X_j,X_k\}$, and
 this implies that
\[
[X_j,X_k]=
\nabla^g_{X_j}X_k-\nabla^g_{X_k}X_j
\in\spann\{X_j,X_k\}.
\]
Thus the distribution generated by $X_j$ and $X_k$ is integrable, and
it also follows that the whole distribution $\spann\{X_1,\ldots,X_m\}$ is integrable.

Using the Frobenius theorem, there is a chart $(y_1, \ldots, y_n)$ near $p$ such that the sets $\{ (y_{m+1}, \ldots, y_n) = \mathrm{const} \}$ are integral manifolds of the distribution $\spann\{X_1,\ldots,X_m\}$. We define another chart $(z_1, \ldots, z_n)$ near $p$ such that $z_j = \varphi_j$ for $1 \leq j \leq m$ and $z_l = y_l$ for $l \geq m+1$.
Since $X_j = \mathrm{grad}_{f_j g}(\varphi_j)$ is tangent to the manifolds $\{ (y_{m+1}, \ldots, y_n) = \mathrm{const} \}$, and since the $X_j$ are orthogonal, it follows that the differentials of the functions $z_1, \ldots, z_n$ are linearly independent, and $(z_1, \ldots, z_n)$ is a chart.
We claim that for this chart,
\begin{equation} \label{xk_dzk}
X_k = \partial_{z_k}, \qquad 1 \leq k \leq m.
\end{equation}
To prove \eqref{xk_dzk}, we note that for $1 \leq j, k \leq m$ one has 
\[
    X_j(\varphi_k) = d \varphi_k(X_j) = g(X_j, \mathrm{grad}_g(\varphi_k)) = f_k g(X_j, X_k) = \delta_{jk}.
\]
Here we used that $\mathrm{grad}_g(\varphi_k) = f_k X_k$, the $g$-orthogonality of the $X_j$, and the fact that $f_k g(X_k, X_k) = 1$. Moreover, $X_k(z_l) = dz_l(X_k) = 0$ whenever $l \geq m+1$ and $k \leq m$, since the manifolds $\{(z_{m+1}, \ldots, z_n)=\mathrm{const} \}$ are integral manifolds of $\spann\{X_1,\ldots,X_m\}$. Thus $X_k = \sum_{l=1}^n X_k(z_l) \partial_{z_l} = \partial_{z_k}$ for $1 \leq k \leq m$, which proves \eqref{xk_dzk}.

In particular, by \eqref{xk_dzk} the Lie brackets $[X_j, X_k]$ are zero.
Next, we use that the Levi-Civita connection is torsion free for any metric, that for any $j$, the vector field $X_j$ is parallel for the metric $f_j g$, and finally the  formula \eqref{eqn: connection for a multiple of the metric} for the conformal factor $\frac{f_j}{f_k}$ with $h_{jk} = \frac{1}{2} \log \frac{f_j}{f_k}$. These facts imply that for $j \neq k$ one has 
\begin{align*}
    0&=[X_j,X_k]=\nabla^{f_j g}_{X_j}X_k - \nabla^{f_j g}_{X_k}X_j
    =\nabla^{f_j g}_{X_j}X_k = \nabla^{(f_j/f_k) f_k g}_{X_j}X_k\\
    &=\nabla^{f_k g}_{X_j}X_k + X_j(h_{jk})X_k + X_k(h_{jk})X_j - f_k g(X_j,X_k)\grad_{f_k g}(h_{jk})\\
    &=X_k(h_{jk})X_j + X_j(h_{jk})X_k.
\end{align*}
Since $X_j=\partial_j$ and $X_k=\partial_k$ are linearly independent, we deduce that $X_k(h_{jk}) = \partial_k (h_{jk}) = 0$ for $j \neq k$. Further, since $h_{jk} = h_j - h_k$ where $h_j = \frac{1}{2} \log\,f_j$, we obtain that 
\[
\partial_k(h_j - h_k) = 0, \qquad 1 \leq j, k \leq m.
\]
One also has $h_1 \equiv 0$, since $f_1 \equiv 1$.

Next, we claim that the functions $h_j$, $1 \leq j \leq m$, only depend on the variables $z_{m+1}, \ldots, z_n$. First note that 
\[
\partial_1 h_j = \partial_1 (h_j - h_1) = 0, \qquad 1 \leq j \leq m.
\]
Next we use that $\partial_2 h_2 = \partial_2 (h_2-h_1) = 0$, which implies that 
\[
\partial_2 h_j = \partial_2 (h_j - h_2) = 0, \qquad 1 \leq j \leq m.
\]
Repeating this argument shows that indeed $h_j$, $1 \leq j \leq m$, only depends on the variables $z_{m+1}, \ldots, z_n$.


Combining the above facts, the metric in the $(z_1, \ldots, z_n)$ coordinates has the following form:

\begin{equation}
\label{eq:metric}
\left[\begin{array}{ccccc}
1 & 0 & \dots & 0  & 0\\
0 & f_2(z_{m+1},\dots,z_n) & \dots & 0 & 0\\
 &  & \vdots &  & \\
0 & 0 & \dots & f_m(z_{m+1},\dots,z_n) & 0\\
0 & 0 & \dots & 0 & D(z_{1},\dots,z_n)\\
\end{array}\right]
\end{equation}
where $D$ is an $(n-m)\times (n-m)$ symmetric matrix.

For any element $D_{ab}$ of $D$, where $a, b \geq m+1$, and for any $j\leq m$ we can compute
\[
    \partial_j D_{ab} = \partial_j (g(\partial_a, \partial_b)) =
    g(\nabla^g_{\partial_j}\partial_a, \partial_b) + g(\nabla^g_{\partial_j}\partial_b, \partial_a)
\]
Using \eqref{eqn: connection for a multiple of the metric}, the first term in the above sum can be computed as
\[
\begin{split}
    g(\nabla^g_{\partial_j}\partial_a, \partial_b) &=
    g(\nabla^g_{\partial_a}\partial_j, \partial_b) \\
    &= g(\nabla^{f_j g}_{\partial_a}\partial_j - \partial_a(h_j)\partial_j -\partial_j(h_j)\partial_a +g(\partial_a,\partial_j)\grad_g(h_j), \partial_b )
        \\
    &=0
\end{split}
\]
since the metric is of the form in \eqref{eq:metric}. Similarly one has $g(\nabla^g_{\partial_j}\partial_b, \partial_a) = 0$.

We recall that we assumed $f_1 \equiv 1$, so the original metric in the $(z_1, \ldots, z_n)$ coordinates is a conformal multiple of
\[
\left[\begin{array}{ccccc}
1 & 0 & \dots & 0  & 0\\
0 & f_2(z_{m+1},\dots,z_n) & \dots & 0 & 0\\
 &  & \ddots &  & \\
0 & 0 & \dots & f_m(z_{m+1},\dots,z_n) & 0\\
0 & 0 & \dots & 0 & D(z_{m+1},\dots,z_n)\\
\end{array}\right]
\]
where $D$ is an $(n-m)\times (n-m)$ symmetric matrix.

Finally, to show the converse statement we assume that the metric is conformal to the above form in some  local coordinates $(z_1, \ldots, z_n)$. Using \cite[Remark 2.8]{DKSaU}, $z_1$ is an LCW. Similarly, after taking $f_2$ as a factor, the metric is conformal to 
\[
\left[\begin{array}{ccccc}
\tilde{f}_1(z_{m+1},\dots,z_n) & 0 & \dots & 0  & 0\\
0 &1& \dots & 0 & 0\\
 &  & \ddots &  & \\
0 & 0 & \dots & \tilde{f}_m(z_{m+1},\dots,z_n) & 0\\
0 & 0 & \dots & 0 & \tilde{D}(z_{m+1},\dots,z_n)\\
\end{array}\right]
\]
where $\tilde{f}_1, \tilde{f}_3, \ldots, \tilde{f}_m, \tilde{D}$ only depend on $z_{m+1}, \ldots, z_n$. Thus $z_2$ is an LCW again by \cite[Remark 2.8]{DKSaU}. Similarly we get that $z_1, z_2, \ldots, z_m$ are all LCWs, and they are clearly orthogonal.
\end{proof}

\begin{cor}
    An $n$-dimensional manifold with $n$ orthogonal LCWs is conformally flat.
\end{cor}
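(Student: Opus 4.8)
The plan is to invoke Theorem \ref{thm:diagonal_metric} in the extreme case $m=n$ and then read off the conclusion. By hypothesis the conformal class $[g]$ admits $n$ orthogonal LCWs $\varphi_1,\dots,\varphi_n$ near a given point $p$. Applying the theorem with $m=n$ produces local coordinates $(z_1,\dots,z_n)$ with $z_j=\varphi_j$ for all $j$, in which some conformal multiple $\tilde g$ of $g$ is diagonal with entries $1,f_2,\dots,f_n$, each $f_j$ being a function of the variables $z_{m+1},\dots,z_n$. The key observation is that when $m=n$ the transverse block $D$ is a $0\times 0$ (empty) matrix, and the list of variables $z_{m+1},\dots,z_n$ is empty. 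Hence each $f_j$ depends on no variable at all and is therefore a positive constant.

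So the whole content of the corollary reduces to noting that $\tilde g$ is a constant-coefficient diagonal metric $\mathrm{diag}(1,f_2,\dots,f_n)$ on the coordinate neighbourhood. Positivity of the $f_j$ is automatic, since they arise as conformal factors in the proof of Theorem \ref{thm:diagonal_metric}. The final step is the trivial change of coordinates $w_1=z_1$, $w_j=\sqrt{f_j}\,z_j$ for $2\le j\le n$, under which $\tilde g$ becomes the standard Euclidean metric $\sum_j dw_j^2$. Thus $\tilde g$ is flat, and since $\tilde g$ is a conformal multiple of $g$, the metric $g$ is conformally flat near $p$; as $p$ was arbitrary, $(M,g)$ is conformally flat.

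There is no real obstacle here: the corollary is a direct specialization of the theorem, and the only point requiring a moment's care is the vacuous dependence of the $f_j$ on an empty set of variables, which forces them to be constant. One might alternatively phrase the last step by observing that a diagonal metric whose coefficients are all constant has vanishing Riemann curvature, hence is flat, so again $g$ lies in the conformal class of a flat metric. Either formulation yields the statement immediately.
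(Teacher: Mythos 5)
Your proof is correct and is essentially the paper's argument: the corollary is stated as an immediate consequence of Theorem \ref{thm:diagonal_metric} with $m=n$, where the block $D$ and the variable list $z_{m+1},\dots,z_n$ are empty, so the $f_j$ are positive constants and the conformal representative is flat. Your rescaling $w_j=\sqrt{f_j}\,z_j$ is exactly the routine final step the paper leaves implicit.
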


\section{Limiting Carleman weights in 3-manifolds} \label{sec6}

Conformal geometry in dimension three is usually studied through the use of the Cotton tensor of the metric.
This is a $(3,0)$-covariant tensor that is antisymmetric in its last two entries;
therefore it can be seen as a $2$-form valued tensor. In dimension 3, we can identify these with vectors, thus resulting in the so called Cotton-York tensor, that contains the same information as the original Cotton tensor.

 In \cite{AFGR} and \cite{AFG} we studied the restrictions that  a limiting Carleman weight sets on the Cotton-York tensor.

 \begin{thm}[\cite{AFGR}]\label{Cotton2}
  Let $n=3$. If a metric $\tilde{g} \in [g]$ admits a parallel vector field, then for any $p\in M$, there is a tangent vector $v\in T_pM$ such that
  $$
  CY_p(v,v)=CY_p(w_1,w_2)=0
  $$
  for any pair of vectors $w_1,w_2 \in v^\perp$. Moreover, if $\varphi$ is an LCW in $(M,g)$, then $v = \mathrm{grad}_{g}(\varphi)|_p$ has the above property.
 \end{thm}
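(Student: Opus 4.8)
The plan is to exploit the fact that, in dimension three, both the hypothesis and the conclusion are conformally natural, so that one may compute in the most convenient metric of the conformal class and then transfer the result back. Suppose first that $\varphi$ is an LCW. By \cite[Theorem 1.2]{DKSaU} (cf.\ Lemma \ref{lemma_lcw_dksau}) there is a conformal factor producing a metric $\tilde g \in [g]$ in which $V := \mathrm{grad}_{\tilde g}(\varphi)$ is a \emph{unit parallel} vector field, $\nabla^{\tilde g} V \equiv 0$. Since conformal rescaling only multiplies a gradient by a positive function, the direction of $V$ coincides with that of $v = \mathrm{grad}_g(\varphi)|_p$; in particular $v^\perp = V^\perp$. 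Thus both assertions reduce to the following purely Riemannian claim: if $\tilde g$ carries a parallel field $V$ (which has constant length, so may be normalized to unit), then its Cotton--York tensor satisfies $CY(V,V)=0$ and $CY(w_1,w_2)=0$ for all $w_1,w_2 \perp V$.

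To prove this claim I would first record the geometric consequences of $\nabla^{\tilde g}V \equiv 0$: the curvature operator annihilates $V$, so $\Ric(V,\cdot)=0$, and by the local de Rham splitting $\tilde g$ is locally a Riemannian product $dt^2 \oplus h$ with $V=\partial_t$ and $h$ a metric on a surface $M_0$. Hence all curvature quantities depend only on the surface variables and are governed by the Gauss curvature $K$ of $h$: one has $\Ric(\partial_a,\partial_b)=K\,h_{ab}$, scalar curvature $R = 2K$, and so, for the Schouten tensor $P = \Ric - \tfrac{R}{4}\tilde g$ in dimension three,
\[
P(V,V) = -\tfrac{K}{2}, \qquad P(V,\cdot)|_{V^\perp}=0, \qquad P(\partial_a,\partial_b)=\tfrac{K}{2}\, h_{ab}.
\]

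Next I would feed this Schouten tensor into the Cotton tensor $C_{ijk} = \nabla_k P_{ij} - \nabla_j P_{ik}$ and then into the Cotton--York tensor, which in dimension three is the symmetric trace-free tensor obtained by dualizing $C$ in its antisymmetric pair of indices. Because $K$ (and therefore $P$) is independent of $t$ and the only nonvanishing Christoffel symbols are those of $h$, a direct computation should show that the only surviving components of $C$ are the ones of the form $C(V;V,\partial_a)$ and the ``two-dimensional'' components $C(\partial_a;\partial_b,\partial_c)$, all proportional to $dK$. Upon dualizing, every diagonal-in-$V$ entry and the entire $V^\perp$ block of $CY$ collapse, leaving only the mixed entries $CY(V,w)$ with $w\perp V$; this is precisely the asserted algebraic structure, with $v=V$.

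Finally I would invoke conformal invariance to return to $g$. In dimension three the Weyl tensor vanishes, so the $(0,3)$ Cotton tensor is a pointwise conformal invariant and the Cotton--York tensor transforms by a conformal weight; moreover orthogonality $w\perp v$ is conformally invariant. Hence the homogeneous relations $CY(v,v)=0$ and $CY(w_1,w_2)=0$, established for $\tilde g$, persist for $g$ with the same direction $v=\mathrm{grad}_g(\varphi)|_p$; the statement for a general parallel field is the identical claim with $v$ the direction of that field. I expect the main obstacle to be the honest Cotton--York computation for the product metric: one must track sign and normalization conventions carefully and verify that the connection terms in $\nabla_k P_{ij}$ cancel so that only the $dK$ contributions remain, and then confirm that these survive solely in the mixed block, yielding exactly $CY(V,V)=0$ together with the vanishing of the $V^\perp$ block.
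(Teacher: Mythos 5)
Theorem \ref{Cotton2} is imported into this paper as a citation from \cite{AFGR}; the paper contains no proof of it, so the only meaningful comparison is with that source, and your proposal is in fact a correct, self-contained reconstruction of the natural argument. The three pillars of your plan all hold up. First, the reduction is legitimate: by \cite[Theorem 1.2]{DKSaU} (used in exactly this way in Section \ref{sec5} of the paper) an LCW $\varphi$ admits a conformal factor making $\mathrm{grad}_{\tilde g}(\varphi)$ a unit parallel field, and since conformal rescaling multiplies the gradient by a positive function, the distinguished direction $v$ and the subspace $v^\perp$ are unchanged, so the ``moreover'' clause does reduce to the parallel-field claim. Second, your Schouten values for the de Rham splitting $\tilde g = dt^2 \oplus h$ are right: with $\mathrm{Ric}(\partial_a,\partial_b)=K h_{ab}$, $\mathrm{Ric}(V,\cdot)=0$ and $s=2K$ one gets $S(V,V)=-\tfrac{K}{2}$, $S(V,\cdot)|_{V^\perp}=0$, $S(\partial_a,\partial_b)=\tfrac{K}{2}h_{ab}$. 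Third, the computation you postponed (and correctly flagged as the main thing to check) does come out as predicted: since $S$ is $t$-independent and the only nonzero Christoffel symbols are those of $h$, one finds $\nabla_0 S = 0$, $\nabla_a S_{00} = -\tfrac{1}{2}\partial_a K$, $\nabla_a S_{0b} = 0$, $\nabla_a S_{bc} = \tfrac{1}{2}(\partial_a K)\, h_{bc}$, so the only nonzero Cotton components are of the types $C_{0a0}$ (two $V$-slots) and $C_{abc}$, both linear in $dK$; dualizing the antisymmetric pair then annihilates the $(V,V)$ entry and the entire $V^\perp \times V^\perp$ block, leaving only the mixed entries $CY(V,w)$ --- precisely the asserted eigenflag structure. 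Finally, conformal invariance of the $(0,3)$ Cotton tensor in dimension three (where the Weyl tensor vanishes identically), together with the conformal invariance of orthogonality, transports the homogeneous vanishing relations back to $g$. The only polish I would suggest is to carry out the Cotton/Cotton--York computation explicitly rather than asserting it (it is short, as above), and to note that the vanishing conclusions are independent of the sign and normalization conventions for $CY$ that you were worried about, since they are statements that certain components vanish rather than identities between nonzero quantities.
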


In analogy with the higher dimensional case we still call such   a direction $v\in T_pM$ an \emph{eigenflag}.
Thus limiting Carleman weights line along eigenflag directions, but there is a priori no reason why eigenflag directions should be realized by limiting Carleman weights.

Observe also that the above theorem has the simple algebraic consequence that the determinant of $CY_p$ vanishes whenever $CY_p$ has the form of Theorem \ref{Cotton2} \cite[Corollary 1.7]{AFGR}. Thus
 there is a sequence of implications
\begin{quotation}
 Existence of LCW $\Rightarrow$
 existence of eigenflag
 $\Rightarrow
\det CY_p=0$.
\end{quotation}

The above implications raise the following natural questions:
\begin{itemize}
\item Are there metrics with eigenflag directions that do not correspond to limiting Carleman weights?
\item  Does $\det(CY_p)=0$ in an open set imply the existence of a limiting Carleman weight?
\item How many different limiting Carleman weights can a metric admit without being conformally flat?
\item More generally, can we characterize metrics that have more than one limiting Carleman weight?
\end{itemize}

The rest of this section answers the above questions.


\subsection{3-manifolds with no LCWs whose Cotton-York tensor has eigenflag directions}
Here we will prove Theorem \ref{thm:3-dim-unimodular group}.
We will provide such examples in unimodular Lie groups. A good reference for such manifolds is \cite{Mil}.

Let $G$ be a 3-dimensional Lie group with Lie algebra $\lieg$. Recall from \cite[Lemma 4.1]{Mil} that if $G$ is unimodular,
 there exists a self-adjoint map
 $L:\lieg\to\lieg$ such that for any $u,v\in\lieg$,
\[
[u,v]=L(u\times v).
\]
The vector product in $\lieg$ is the one defined by an oriented orthonormal basis $e_1, e_2, e_3$ consisting of eigenvectors of $L$, thus there are
$\lambda_1,\lambda_2,\lambda_3\in\R$ such that
\[
[e_i,e_j]=\lambda_k e_k
\]
where $i,j,k$ is an oriented set of indices.
%

Following Milnor, to compute the curvature tensors of $G$, we define numbers $\mu_1,\mu_2,\mu_3$ as
\[
\mu_i=\dfrac{\lambda_1+\lambda_2+\lambda_3}{2}-\lambda_i.
\]

From \cite[Theorem 4.3]{Mil}, the Ricci tensor in the basis $e_1,e_2,e_3$ diagonalizes, with
\[
\Ric(e_1)=2\mu_2\mu_3, \quad \Ric(e_2)=2\mu_1\mu_3, \quad \Ric(e_3)=2\mu_1\mu_2,
\]
and the scalar curvature is
\[
s=2(\mu_1\mu_2+\mu_1\mu_3+\mu_2\mu_3).
\]

In order to compute the Cotton tensor, we calculate first the Levi-Civita connection for the vector fields $e_1, e_2, e_3$.
\cite[equation (5.4)]{Mil} gives that
\[
\nabla_{e_i}e_j=\sum_k \frac{1}{2}\left(\alpha_{ijk} -\alpha_{jki} +\alpha_{kij}\right)e_k,
\]
with
\[
\alpha_{ijk}=\left\langle [e_i,e_j],e_k\right\rangle=\lambda_k.
\]
Writing this in terms of the $\lambda_i$'s yields
\[
\nabla_{e_1}e_2= \frac{\lambda_3-\lambda_1+\lambda_2}{2}e_3, \quad
\nabla_{e_2}e_3= \frac{\lambda_1-\lambda_2+\lambda_3}{2}e_1 , \quad
\nabla_{e_3}e_1=  \frac{\lambda_2-\lambda_3+\lambda_1}{2}e_2,
\]
\[
\nabla_{e_2}e_1= \frac{-\lambda_3-\lambda_1+\lambda_2}{2}e_3, \quad
\nabla_{e_3}e_2= \frac{-\lambda_1-\lambda_2+\lambda_3}{2}e_1 , \quad
\nabla_{e_1}e_3=  \frac{-\lambda_2-\lambda_3+\lambda_1}{2}e_2.
\]
In what follows we will denote by
\[
n_{ijk}=\left\langle \nabla_{e_i}e_j,e_k\right\rangle,
\]
and observe that  $n_{ijk}=0$ whenever one of the indices repeats.
Observe also that the $n_{ijk}$ are \emph{not antisymmetric} in the first two indices, since this would otherwise force the Lie algebra to be abelian.

The Schouten tensor in dimension 3 is
\[
S=\Ric-\frac{s}{4}g,
\]
and
the formula for the Cotton tensor (see \cite[Section 2]{AFGR}) is given as
\begin{align*}
C_{ijk} = C(e_i,e_j,e_k) =
\nabla_{e_i}S(e_j,e_k)-\nabla_{e_j}S(e_i,e_k).
\end{align*}
Since $\nabla_{e_i}S(e_j,e_k) = e_i(S(e_j,e_k)) - S(\nabla_{e_i} e_j, e_k) - S(e_j, \nabla_{e_i} e_k)$ and since $S(e_j, e_k)$ is constant in $x$, the above formulas imply that for any $i, j, k$ one has 
\[
C_{ijk} = -n_{ijk} S_{kk} - n_{ikj} S_{jj} + n_{jik} S_{kk} + n_{jki} S_{ii}.
\]
Thus $C_{ijk} = 0$ whenever two of the indices coincide. The only nonzero terms are
\begin{align*}
C_{123} &= -n_{123}S_{33}-n_{132}S_{22}+n_{213}S_{33}+n_{231}S_{11}\\
C_{231} &= -n_{231}S_{11}-n_{213}S_{33}+n_{321}S_{11}+n_{312}S_{22}\\
C_{312} &= -n_{312}S_{22}-n_{321}S_{11}+n_{132}S_{22}+n_{123}S_{33}.
\end{align*}
The Cotton-York tensor is obtained by dualizing the first two indices, thus
\[
CY_{11}=C_{231}, \quad CY_{22}=C_{312}, \quad CY_{33}=C_{123},
\]
while the rest of the components are zero.

As the reader can easily check by a tedious computation,
by choosing $\lambda_1=6$, $\lambda_2=-4$, and $\lambda_3=5$, one obtains
\[
CY_{11}=\frac{-315}{2}, \quad CY_{22}=\frac{315}{2}, \quad CY_{33}=0,
\]
Thus the directions $e_1+e_2$ and $e_1-e_2$ are eigenflag directions, that is they parametrize the possible directions for the gradients of potential LCWs. However,  they can not be realized by LCWs, since their orthogonal complements are not integrable (see \cite[Theorem 6]{AFG}): in the first case,
$(e_1+e_2)^\perp$ is spanned by $e_1-e_2$ and $e_3$, but
\[
[e_1-e_2,e_3]= 4e_2-6e_1
\]
which does not lie in the span of $e_1-e_2$ and $e_3$. The other case is similar.

\subsection{3-manifolds and metric rigidity for many limiting Carleman weights}

In this section we determine metrics that admit several limiting Carleman weights. The   example to keep in mind is the following:

\begin{example}
\label{ex:3dim}
Assume $M$ is a 3-dimensional Riemannian manifold admitting a limiting Carleman weight. Then $M$ is, up to a conformal factor, contained in 
$\R\times S$ for some surface $S$.
We claim that:
\begin{enumerate}
\item if $S$ is a surface of revolution different from the flat plane, then $M$ has two LCWs;
\item if $S$ is the flat plane, then $M$ is conformally flat.
\end{enumerate}

Recall that a \emph{surface of revolution} is a surface with a Riemannian metric admitting a Killing vector field.

\end{example}

\begin{proof}
The second statement is obvious. For the first, observe that if $S$ is a surface with a Killing field, its metric can be written as
\[
g= dt^2 + dr^2+f(r)^2 d\theta^2,
\]
with a nonvanishing $f$. Here $t$ is an LCW. Dividing by $f^2$, we observe that
\[
\dfrac{1}{f^2}g=d\theta^2+
\dfrac{1}{f^2}(dt^2+dr^2),
\]
and thus there is a second limiting Carleman weight in the $\theta$-direction.
%
%
%
\end{proof}


In what follows, we want to see what restrictions appear in the metric when there are several limiting Carleman weights. We start with the case when there are
 three or more limiting
Carleman weights, proving that this corresponds to the conformally flat situation.

\begin{thm}
Let $M$ be a 3-dimensional Riemannian manifold that admits three limiting Carleman weights whose gradients are pointwise linearly independent. Then $M$ is conformally flat.
\end{thm}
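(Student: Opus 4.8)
The plan is to derive this as a direct consequence of Corollary~\ref{cor:every orth LCW} once the three linearly independent gradients have been orthogonalized. The subtlety is that linear independence of the gradients is \emph{not} the same as orthogonality, so the main work is to upgrade ``three linearly independent LCWs'' to ``three orthogonal LCWs'' in a suitable conformal gauge. First I would invoke Theorem~\ref{thm:3-dim_case_several_LCWs}(i) (equivalently Lemma~\ref{lemma_lcw_dksau}): since $M$ admits an LCW, after a conformal change I may assume $\varphi_1 = z_1$ with $\mathrm{grad}_g(\varphi_1)$ a unit parallel vector field, so that the metric splits locally as $g = dz_1^2 \oplus g_0$ where $g_0$ is a metric on a surface $S$ independent of $z_1$.

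The next step is to analyze the second and third weights $\varphi_2, \varphi_3$ on this product. Writing $X_j = \mathrm{grad}_{f_j g}(\varphi_j)$ for the associated unit parallel fields (as in the proof of Theorem~\ref{thm:diagonal_metric}), I would split each gradient into its $\partial_{z_1}$-component and its tangential-to-$S$ component. Because the gradients are pointwise linearly independent and $S$ is only two-dimensional, at each point the three gradients cannot all lie in a common hyperplane, which forces their projections onto $TS$ to span $TS$. The goal is to show that one can replace $\varphi_2,\varphi_3$ by orthogonal combinations: concretely, I would use Example~\ref{ex:3dim} and Theorem~\ref{thm:3-dim_case_several_LCWs}(ii) to see that the existence of two LCWs with independent gradients already forces $S$ to be (conformal to) a surface of revolution, giving a Killing field on $S$ and hence a \emph{second orthogonal} LCW in the $\theta$-direction after dividing by $f^2$. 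Thus from $\varphi_1$ and one transverse weight I obtain two \emph{orthogonal} LCWs $z_1, \theta$, and the induced product structure $g \sim d\theta^2 + h(dz_1^2 + dr^2)$.

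At this point the third independent weight $\varphi_3$ lives on a manifold that already has the orthogonal pair $z_1, \theta$ as coordinates, and I would feed this into Theorem~\ref{thm:diagonal_metric} with $m=2$: the metric is conformal to a block form with a $1\times 1$ residual block $D(z_3)$, i.e.\ a fully diagonal metric $\mathrm{diag}(1, f_2, D)$ in coordinates $(z_1,\theta,z_3)$ with entries depending only on $z_3$. The presence of the third LCW with gradient independent from $\partial_{z_1},\partial_\theta$ then lets me run the same normalization on the remaining one-dimensional factor, producing a \emph{third} LCW orthogonal to the first two. Concretely, any nonconstant function of $z_3$ alone is then an LCW by \cite[Remark 2.8]{DKSaU}, and orthogonality to $z_1,\theta$ is automatic because the metric is diagonal. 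Having produced three \emph{orthogonal} LCWs in a $3$-manifold, Corollary~\ref{cor:every orth LCW} (the case $n=m=3$) immediately yields conformal flatness.

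The main obstacle I anticipate is the bookkeeping in the second paragraph: justifying that two LCWs with merely linearly independent (not orthogonal) gradients can always be traded for two orthogonal ones without losing the third. The clean way to handle this is the observation recorded just after Theorem~\ref{thm:3-dim_case_several_LCWs} (that independent gradients mean the weights are genuinely distinct, not related by $a\varphi+b$) combined with the Killing-field mechanism of Example~\ref{ex:3dim}: the Killing direction on $S$ is automatically $g_0$-orthogonal to any gradient, so the orthogonalization is geometric rather than an arbitrary linear-algebra choice. Once orthogonality is secured the rest is a mechanical application of Theorem~\ref{thm:diagonal_metric} and its corollary, so the entire difficulty is concentrated in the passage from ``independent'' to ``orthogonal,'' after which conformal flatness follows with no further curvature computation.
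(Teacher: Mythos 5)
Your reduction to Corollary~\ref{cor:every orth LCW} breaks down at the final step, and the failure is not cosmetic. After you have the orthogonal pair $z_1,\theta$ and the normal form $\mathrm{diag}(1,f_2(z_3),D(z_3))$ from Theorem~\ref{thm:diagonal_metric}, you claim that ``any nonconstant function of $z_3$ alone is then an LCW by \cite[Remark 2.8]{DKSaU}.'' This is false: for a function of $z_3$ to be an LCW via that remark, the metric must be conformal to $dz_3^2 \oplus h$ with the transverse metric $h$ \emph{independent of} $z_3$, and dividing $\mathrm{diag}(1,f_2(z_3),D(z_3))$ by $D(z_3)$ leaves the transverse block $\mathrm{diag}(1/D,\,f_2/D)$, which still depends on $z_3$ unless $f_2$ and $D$ are constant. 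Note that this step of yours never actually uses the hypothesis that a third LCW exists; if it were correct, it would prove that \emph{every} manifold conformal to $\R\times S$ with $S$ a surface of revolution is conformally flat, collapsing cases (ii) and (iii) of Theorem~\ref{thm:3-dim_case_several_LCWs} (and it is false: for $S$ of nonconstant curvature the Cotton--York tensor of $\R \times S$ does not vanish). The vaguer fallback --- ``run the same normalization on the remaining one-dimensional factor'' --- is exactly the independent-versus-orthogonal difficulty you flagged at the outset, carried out for $(\varphi_1,\varphi_2)$ via the revolution-surface mechanism but never for $\varphi_3$: nothing in your argument forces $\mathrm{grad}(\varphi_3)$, which a priori has components along all three coordinate directions, to produce a weight orthogonal to $z_1$ and $\theta$.

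The gap can be closed from where you stand, but by a different mechanism, which also shows why orthogonalization is the wrong lever here. Assume $M$ is not conformally flat, so $CY \neq 0$ somewhere; there, \cite[Lemma 14]{AFG} (already invoked in the paper's two-LCW theorem that you cite) gives \emph{exactly two} eigenflag directions, while Theorem~\ref{Cotton2} makes each $\mathrm{grad}(\varphi_i)$ an eigenflag direction --- and three pointwise linearly independent gradients cannot fit into two lines, a contradiction, so $CY \equiv 0$. The paper's own proof is even more elementary and purely pointwise: writing $CY_p$ in an orthonormal basis adapted to an eigenflag via Theorem~\ref{Cotton2}, each eigenflag direction yields a vector in its orthogonal complement annihilated by $CY_p$; three independent eigenflags force these kernel vectors to span a two-dimensional subspace, and trace-freeness of $CY_p$ then kills the remaining eigenvalue, giving $CY_p = 0$. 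No conformal splitting, surface of revolution, or coordinate normal form is needed; the product structure you build is real work that the statement does not require, and it is precisely the part of your argument that cannot be completed without already knowing the conclusion.
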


Notice that by a standard density argument, the Theorem also works in the case when the limiting Carleman weights are independent in a \emph{dense open set}.

\begin{proof}
We will show that $CY_p\equiv 0$ everywhere. To do this, we diagonalize the $2$-tensor $CY_p$. From Theorem \ref{Cotton2}, the associated matrix in an orthonormal base $\mathcal{B}=\{\,e_1,e_2,e_3\}$ with $e_1$ an eigenflag, has the form
\[
\begin{pmatrix}
0 & a & b \\
a & 0 & 0 \\
b & 0 & 0
\end{pmatrix}
\]
with $a,b\in\R$; its
 eigenvalues are $0$ and $\pm\sqrt{a^2+b^2}$.

 Suppose that $CY_p \not\equiv 0$. Then there is a unit vector $v \in e_1^{\perp}$ ($v$ is a multiple of $b e_2 - a e_3$) with $CY_p(v) = 0$. Similarly, if $e_1'$ and $e_1''$ are the other two eigenflag directions such that $\{ e_1, e_1', e_1''\}$ are linearly independent, there are unit vectors $v' \in (e_1')^{\perp}$ and $v'' \in (e_1'')^{\perp}$ with $CY_p(v') = CY_p(v'') = 0$. Now it is not possible that $v, v', v''$ are collinear (since no unit vector can be orthogonal to $e_1$, $e_1'$, $e_1''$), so it follows that $CY_p(w) = 0$ for all $w$ in some two-dimensional subspace on $T_p M$. Thus $0$ is an eigenvalue of multiplicity at least two, but since $CY_p$ is trace free all eigenvalues must be zero. Thus $CY_p \equiv 0$, which is a contradiction.
\end{proof}

Next we study the case with two LCWs. 

\begin{thm}
Let $M$ be a Riemannian 3-manifold admitting two limiting Carleman weights with linearly independent gradients. Suppose $M$ is not conformally flat.
Then $M$ is conformal to $\R \times S$, where $S$ is a nonflat surface of  revolution.
\end{thm}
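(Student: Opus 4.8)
The plan is to first show that the two given weights $\varphi_1,\varphi_2$ are automatically \emph{orthogonal}, and then to read off the product structure from Theorem~\ref{thm:diagonal_metric}. Since $M$ is not conformally flat, the Cotton--York tensor satisfies $CY_p\neq 0$ on an open dense set $U\subset M$. Fix $p\in U$. By Theorem~\ref{Cotton2}, both $\grad_g(\varphi_1)|_p$ and $\grad_g(\varphi_2)|_p$ are eigenflag directions of $CY_p$, so everything reduces to understanding all eigenflag directions of $CY_p$. As recorded in the proof of the previous theorem, in a suitable orthonormal basis $CY_p$ has the matrix with first row $(0,a,b)$ and vanishing lower $2\times 2$ block, with eigenvalues $0$ and $\pm\rho$, where $\rho=\sqrt{a^2+b^2}>0$ on $U$.

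The crux of the argument is to count these eigenflags. Working in an eigenbasis $u_0,u_+,u_-$ for eigenvalues $0,\rho,-\rho$, the quadratic form $CY_p$ reads $\rho(t_+^2-t_-^2)$, whose zero set is the union of the two planes $\Pi_{\pm}=\{t_+=\pm t_-\}$. A unit vector $v$ is an eigenflag precisely when the form $CY_p$ vanishes identically on the $2$-plane $v^\perp$; but a $2$-plane contained in $\Pi_+\cup\Pi_-$ must equal one of them, since a real vector space is never the union of two proper subspaces. Hence $v^\perp\in\{\Pi_+,\Pi_-\}$, so $v$ is the normal to $\Pi_+$ or to $\Pi_-$, i.e. $v\propto u_+\mp u_-$; one checks $CY_p(v,v)=0$ for these, so they are genuine eigenflags. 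Thus there are exactly two eigenflag directions and they are orthogonal. Since $\grad_g(\varphi_1)|_p$ and $\grad_g(\varphi_2)|_p$ are linearly independent eigenflags, they must coincide (up to scale) with these two distinct directions, hence are orthogonal at $p$. Because $U$ is dense, continuity gives $g(\grad_g\varphi_1,\grad_g\varphi_2)=0$ everywhere, so $\varphi_1,\varphi_2$ are orthogonal LCWs.

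With orthogonality established, the product structure is immediate: applying Theorem~\ref{thm:diagonal_metric} with $m=2$, $n=3$ yields coordinates $z_1=\varphi_1$, $z_2=\varphi_2$, $z_3$ in which a conformal multiple of $g$ equals $\mathrm{diag}\bigl(1,f_2(z_3),D(z_3)\bigr)$, that is,
\[
dz_1^2+\bigl(f_2(z_3)\,dz_2^2+D(z_3)\,dz_3^2\bigr).
\]
Therefore $g$ is conformal to $\R\times S$, where $S$ carries the metric $f_2(z_3)\,dz_2^2+D(z_3)\,dz_3^2$; as this metric is independent of $z_2$, the field $\partial_{z_2}$ is Killing and $S$ is a surface of revolution in the sense of Example~\ref{ex:3dim}. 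Finally $S$ cannot be flat, since otherwise $\R\times S$ would be flat and $M$ conformally flat, contradicting the hypothesis. The only nontrivial step is the eigenflag computation in the second paragraph; once orthogonality is known, the remainder is a direct appeal to Theorem~\ref{thm:diagonal_metric}.
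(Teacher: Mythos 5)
Your proof is correct and follows the same two-step skeleton as the paper's: establish that the two gradients are orthogonal via the eigenflag structure of the Cotton--York tensor, then apply Theorem \ref{thm:diagonal_metric} with $m=2$, $n=3$ to read off the product structure. The difference lies in how orthogonality is obtained. The paper gets it by citation: existence of eigenflags forces $\det(CY)=0$ by \cite[Corollary 1.7]{AFGR}, and then \cite[Lemma 14]{AFG} (and its proof) gives exactly two eigenflag directions, necessarily orthogonal. You instead reprove this fact from scratch: starting from the normal form of $CY_p$ supplied by Theorem \ref{Cotton2} (eigenvalues $0,\rho,-\rho$), you characterize eigenflags as unit vectors $v$ with $v^\perp$ contained in the zero set $\Pi_+\cup\Pi_-$ of the quadratic form, and use the elementary fact that a $2$-plane inside a union of two $2$-planes must equal one of them. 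This is a clean, self-contained replacement for the external lemma; the only point to make explicit is that dropping the condition $CY_p(v,v)=0$ from the eigenflag definition is legitimate because $CY$ is trace-free (your direct verification for the two candidate normals covers this anyway). Two further small differences are in your favor: you verify the ``nonflat'' claim for $S$, which the paper's proof leaves implicit, and you identify $S$ as a surface of revolution by observing that $\partial_{z_2}$ is Killing, rather than normalizing $D(z_3)$ to $1$ by a coordinate change --- both consistent with the paper's definition of surface of revolution.

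One imprecision to flag, though it is shared with the paper's own proof: you claim that ``$M$ not conformally flat'' makes $\{CY\neq 0\}$ an open \emph{dense} set. This does not follow; non-conformal-flatness only makes that set nonempty (and open), and $CY$ could in principle vanish on an open subset of $M$. The paper glosses over the same point when it passes from ``$CY$ is not null'' to conclusions at every point. To make the argument airtight one should either read the statement locally near points where $CY\neq 0$, or supply an additional argument (e.g., unique continuation for the conformal Killing fields $|\nabla\varphi_i|^{-2}\nabla\varphi_i$ from Lemma \ref{lemma_lcw_dksau}) to propagate orthogonality across any open region where $CY$ vanishes. Since your continuity argument is exactly as rigorous as the paper's, I would not count this as a defect of your proposal specifically, but it is worth being aware of.
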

\begin{proof}
Denote the two LCWs by $\varphi_1$ and $\varphi_2$. Then the gradients of $\varphi_1$ and $\varphi_2$ give rise to eigenflag directions by Theorem \ref{Cotton2}, and consequently $\det(CY) = 0$ by \cite[Corollary 1.7]{AFGR}. Since $M$ is not conformally flat, $CY$ is not null, and hence \cite[Lemma 14]{AFG} states that $M$ has exactly two eigenflag directions. Moreover, the proof of \cite[Lemma 14]{AFG} implies that these eigenflag directions are necessarily orthogonal. It thus follows that the gradients of $\varphi_1$ and $\varphi_2$ are orthogonal.

Since $\varphi_1$ and $\varphi_2$ are orthogonal LCWs, by Theorem \ref{thm:diagonal_metric} there is a chart $(x,y,z)$ where $x = \varphi_1$, $y = \varphi_2$, and the metric is conformal to 
\[
\begin{pmatrix}
1 & 0 & 0 \\
0 & a(z) & 0 \\
0 & 0 & b(z)
\end{pmatrix} .
\]
After a change of coordinates of the type $(x,y,z)\rightarrow (x,y,v(z))$, the metric is conformal to 
\[
\begin{pmatrix}
1 & 0 & 0 \\
0 & a(z) & 0 \\
0 & 0 & 1
\end{pmatrix} ,
\]
and thus $M$ is conformal to $\mR \times S$ for some surface of revolution $S$.
\end{proof}

\section{Limiting Carleman weights in 4-manifolds} \label{sec7}

We start this section by determining the local structure of metrics in 4-dimensional manifolds that admit several orthogonal LCWs. The case of four orthogonal LCWs is covered by Corollary \ref{cor:every orth LCW}, and corresponds to conformally flat metrics. This leaves the cases of two and three  LCWs, that we cover in the next two sections.

\subsection{Structure of a 4D manifold with two orthogonal LCWs.}
\label{subsec:4dim,2LCWs}
By Theorem \ref{thm:diagonal_metric}, the metric $g$ is written in some coordinates $(t,x,y,z)$, where $t$ and $x$ are LCWs, as a conformal multiple of 
\[
\begin{pmatrix}
1 & 0 & 0 & 0 \\
0 & a(y,z) & 0 & 0 \\
0 & 0 & b_{11}(y,z) & b_{12}(y,z) \\
0 & 0 & b_{21}(y,z) & b_{22}(y,z)
\end{pmatrix} .
\]

The above metric is the product
$
\R\times \tilde{M}^3,
$
where the metric $\tilde{g}$ taken in $\tilde{M}^3$ is a warped product over a surface $(S,\tilde{h})$ with fiber a line; i.e,
at each point $(x,y,z)$,
\[
\tilde{g}=\pi^*\tilde{h}+a(y,z) \,dx^2
\]
where $\pi:\R^3\to \R^2$ is the projection onto the $(y,z)$-coordinates and $\tilde{h}$ is given by the $2\times 2$ submatrix $(b_{ij})$.

It is natural to ask to what extent the structure for the metric resembles this if we only assume existence of two LCWs, \emph{not necessarily orthogonal}. This cannot happen for type B Weyl tensors, since eigenflags for that case are always orthogonal. But the case of type C Weyl tensors allows this possibility, although we have not been able to produce any example for this case.

\subsection{Structure of a 4D manifold with three orthogonal LCWs}
\label{subsec:4dim,3LCWs}
In this case, the metric is by Theorem \ref{thm:diagonal_metric} a conformal multiple of 
\[
\begin{pmatrix}
1 & 0 & 0 & 0 \\
0 & a(z) & 0 & 0 \\
0 & 0 & b(z) & 0 \\
0 & 0 & 0 & c(z)
\end{pmatrix} .
\]
If $h(z)$ is a primitive of $\sqrt{c(z)}$, we can define a new coordinate $\bar{z}=h(z)\cdot z$; using $(t,x,y,\bar{z})$ as the new system of coordinates, the above metric becomes 
\[
\begin{pmatrix}
1 & 0 & 0 & 0 \\
0 & a(\bar{z}) & 0 & 0 \\
0 & 0 & b(\bar{z}) & 0 \\
0 & 0 & 0 & 1
\end{pmatrix} .
\]
We can write this as the Riemannian product of $\R$ times a 3-dimensional manifold with a metric that is an iterated warped product; namely,
if $\pi_1(x,y,\bar{z})=(y,\bar{z})$, and $\pi_2(y,\bar{z})=\bar{z}$, then the 3-dimensional factor has the metric
\[
\bar{g}=
a(\bar{z})\,dx^2 +\pi_1^*(\,b(\bar{z})\,dy^2+\pi_2^*\,d\bar{z}^2\,).
\]

%
%

\subsection{Product 4-manifolds and LCWs.}
A 4-dimensional Riemannian manifold can split as a metric product only in two ways: either as $\R\times N^3$ (and in that case, it would have a LCW), or as a product of surfaces. In the latter case, the Weyl tensor is of type C, and as a consequence it has plenty of eigenflag directions.
The first three authors proved in \cite{AFG} that a product of two surfaces has a LCW if and only if one of the factors is a surface of revolution.
We now provide a simpler proof of Theorem 11 in \cite{AFG} (cf. Lemma 20 in \cite{AFG}).

 \begin{thm}
 A product of two surfaces has a LCW if and only if one of the factors is a surface of revolution.
 \end{thm}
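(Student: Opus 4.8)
My plan is to deduce both implications from a single local statement, which I will call the \emph{key lemma}: if $\varphi$ is a smooth function on the product that is pulled back from the first factor, i.e.\ $\varphi=\varphi(S_1)$, then $\varphi$ is an LCW of $(S_1\times S_2,\,g_1\oplus g_2)$ if and only if $X:=\nabla_{g_1}\varphi/|\nabla_{g_1}\varphi|^2$ is a nonvanishing Killing field of $(S_1,g_1)$. Granting this, the theorem follows in both directions once I recall (as in Example \ref{ex:3dim}) that a surface of revolution is precisely a surface carrying a nonvanishing Killing field $K$: for the ``revolution $\Rightarrow$ LCW'' direction I take $\varphi$ with $d\varphi=K^\flat/|K|^2$, the $1$-form on the right being closed (in rotation coordinates $g_1=dr^2+f(r)^2\,d\theta^2$ it is just $d\theta$), so that $X=K$ is Killing and $\varphi$ is an LCW; for ``LCW $\Rightarrow$ revolution'' an LCW tangent to $S_1$ produces, via the key lemma, a nonvanishing Killing field on $S_1$, whence $S_1$ is a surface of revolution.

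To prove the key lemma I would use Lemma \ref{lemma_lcw_ck}. Since $\varphi=\varphi(S_1)$ and the metric is block diagonal, $\nabla_g\varphi=\nabla_{g_1}\varphi$ is tangent to $S_1$ and $|\nabla_g\varphi|^2=|\nabla_{g_1}\varphi|^2$, so $X$ is tangent to $S_1$ with components depending only on the $S_1$-coordinates, and moreover $|X|^{-2}X^\flat=d\varphi$ is automatically closed. Thus by Lemma \ref{lemma_lcw_ck} the only thing to decide is whether $X$ is conformal Killing for the product metric $g$. Computing $\mathcal{L}_X g$ in coordinates $(u,v)$ on $S_1$ and $(p,q)$ on $S_2$ --- using that $X$ has no $S_2$-components and that $g_1,g_2$ each depend only on their own variables --- one finds that the $S_2\times S_2$ block and the mixed block of $\mathcal{L}_X g$ vanish identically, while the $S_1\times S_1$ block equals $\mathcal{L}^{g_1}_X g_1$. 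Hence the conformal Killing equation $\mathcal{L}_X g=\lambda g$ forces $\lambda g_2=0$, i.e.\ $\lambda\equiv 0$ since $g_2$ is nondegenerate, and then $\mathcal{L}^{g_1}_X g_1=0$: the field $X$ must be a genuine Killing field of $(S_1,g_1)$. This forcing $\lambda=0$ is the crux of the argument; it upgrades the (in dimension two far too weak) conformal Killing condition to the Killing condition, and is exactly what singles out surfaces of revolution.

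The remaining ingredient, needed only for the ``LCW $\Rightarrow$ revolution'' direction, is that the gradient of \emph{any} LCW on the product is tangent to one of the two factors. Here I would invoke that $\nabla\varphi$ must point along an eigenflag direction of the Weyl tensor \cite{AFGR} (Definition \ref{eigenflag}), together with Lemma \ref{lem:Weyl}: a product of two surfaces has Weyl tensor of type C, whose eigenflag directions are exactly the union of the two orthogonal tangent planes $TS_1$ and $TS_2$ \cite{AFG}. Since $\nabla\varphi$ is continuous and nonvanishing and $TS_1\cap TS_2=\{0\}$, the subsets of the domain where $\nabla\varphi\in TS_1$ and where $\nabla\varphi\in TS_2$ are disjoint and closed, so on a connected patch $\nabla\varphi$ is tangent to a single factor, say $S_1$; being $g$-orthogonal to $TS_2$ then means $\varphi$ is constant along the $S_2$-fibres, i.e.\ $\varphi=\varphi(S_1)$, which is exactly the hypothesis of the key lemma.

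I expect the main obstacle to be this last, type-C eigenflag step, which rests on the earlier Weyl-tensor analysis of \cite{AFGR,AFG} and is valid where the Weyl operator is nonzero; at points where it vanishes the product is conformally flat, and that degenerate locus should be treated separately (or one restricts, as in Theorem \ref{thm: products of surfaces which admit LCW}, to the open set where the Weyl tensor does not vanish). By contrast the analytic heart --- the short Lie-derivative computation forcing $\lambda=0$ --- is elementary, and it is there that the surface-of-revolution structure genuinely originates.
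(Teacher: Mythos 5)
Your proposal is correct, and after the common first step it takes a genuinely different route from the paper's proof. Both arguments start the same way: the gradient of an LCW points along an eigenflag direction (Definition \ref{eigenflag}), the Weyl tensor of a product of surfaces is of type C with eigenflags exactly the two tangent planes, hence $\nabla\varphi$ is tangent to one factor and $\varphi$ descends to $S_1$. From there the paper works with coordinates: it builds a companion function $\psi$ on $S_1$, takes isothermal coordinates on $S_2$, invokes \cite[Theorem 1.2]{DKSaU} to produce a conformal factor $f$ making $\nabla\varphi$ unit parallel, and then plays the two expressions for $f$ against each other ($f = 1/a$ depends only on $(x,y)$, while $f = h_3/\lambda$ is independent of $x$) to conclude that the $S_1$-block is $a(y)\,dx^2 + b(y)\,dy^2$, a surface of revolution. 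You instead pass through the conformal Killing field characterization (Lemma \ref{lemma_lcw_ck}): for $\varphi = \varphi(S_1)$ the field $X = |\nabla\varphi|^{-2}\nabla\varphi$ is tangent to $S_1$ and constant along $S_2$, the closedness of $|X|^{-2}X^\flat = d\varphi$ is automatic, and the block computation of $\mathcal{L}_X g$ shows that the vanishing of the $S_2\times S_2$ block forces the conformal factor $\lambda$ to vanish, upgrading ``conformal Killing on the product'' to ``Killing on $S_1$.'' This buys two things the paper's argument does not deliver in one stroke: both implications follow from your single key lemma (the paper cites \cite{AFG} for the ``if'' direction rather than proving it), and the mechanism that singles out surfaces of revolution --- the second factor rigidifying the otherwise very weak two-dimensional conformal Killing equation --- is made explicit instead of being hidden in a comparison of conformal factors. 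What the paper's method buys in exchange is explicit revolution coordinates for $g_1$, in the spirit of its other structural results such as Theorem \ref{thm:diagonal_metric}.

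Two small points of hygiene, neither a gap. First, Lemma \ref{lemma_lcw_ck} is stated for simply connected manifolds and your $\varphi$ in the ``revolution $\Rightarrow$ LCW'' direction (a primitive of $K^\flat/|K|^2$) exists only locally, so the whole argument should be phrased locally --- which is also how the paper treats the statement. Second, your remark that the product ``is conformally flat'' where $W$ vanishes is not accurate pointwise: for a product of surfaces $W$ vanishes at $(p_1,p_2)$ exactly when $\kappa_1(p_1)+\kappa_2(p_2)=0$, which can occur on a thin set without any conformal flatness. But your resolution --- restricting to the open set where $W \neq 0$, as in Theorem \ref{thm: products of surfaces which admit LCW} --- is precisely the hypothesis under which the paper's own proof operates, since it too asserts without qualification that the Weyl tensor of the product is of type C.
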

\begin{proof}
As pointed in \cite[proof of Theorem 11]{AFG}, there is a LCW whenever one of the factors is a surface of revolution, so we need to prove only one implication.

Let $M=S_1\times S_2$ be a product of two surfaces with the metric $g=g_1\oplus g_2$, and denote by $\varphi$ a LCW for this metric. At any point, the gradient of $\varphi$ is an eigenflag direction, and since the Weyl tensor of a product metric is of type C,
 $\nabla\varphi$ must be tangent to one of the two factors, say $S_1$ (see \cite[Lemma 19]{AFG}).

Let $x,y$ be coordinates in $S_1$ and $z,t$ be isothermal coordinates in $S_2$, so that the metric in $S_2$ is

\[
\begin{pmatrix}
\lambda(z,t) & 0 \\
0 & \lambda(z,t)
\end{pmatrix} .
\]

We first remark that $\varphi$ does not depend on $z$ or $t$, since
\[
\partial_z\varphi = d\varphi(\partial_z) = \langle \nabla \varphi,\partial_z \rangle = 0,
\]
as $\nabla\varphi$ is everywhere tangent to $S_1$.
This means that we can consider $\varphi$ as a function of the first factor $S_1$ only.
We now construct a companion function $\psi:S_1\rightarrow\R$, whose differential vanishes on $\nabla\varphi$.
This is always possible by basic ODE theory (there are coordinates $(x_1,x_2)$ on $S_1$ with $\nabla \varphi = \partial_{x_1}$, and one can take $\psi = x_2$).

We write the metric of $M$ in the coordinates $\{x=\varphi,y=\psi,z,t\}$ (whose coordinate vector fields are denoted $\{\partial_1,\partial_2,\partial_3,\partial_4\}$):
\[
\begin{pmatrix}
a(x,y) & 0 & 0 & 0\\
0 & b(x,y) &0 & 0 \\
0 & 0 & \lambda(z,t) & 0\\
0 & 0 & 0 & \lambda(z,t)
\end{pmatrix} .
\]

In a multiple $f\cdot g$ of $g$, $\nabla \varphi$ is a unit parallel vector field, and the metric is written, in the same coordinates $\{\varphi,\psi,z,t\}$ (since $\partial_2,\partial_3,\partial_4$ are orthogonal to $\partial_1$ in $g$, they are also orthogonal in any multiple of $g$):
\[
\begin{pmatrix}
1 & 0 & 0 & 0\\
0 & h_2(y,z,t) &0 & 0 \\
0 & 0 & h_3(y,z,t) & 0\\
0 & 0 & 0 & h_4(y,z,t)
\end{pmatrix} .
\]
Thus
\[
f(x,y,z,t)\begin{pmatrix}
a(x,y) & 0 & 0 & 0\\
0 & b(x,y) &0 & 0 \\
0 & 0 & \lambda(z,t) & 0\\
0 & 0 & 0 & \lambda(z,t)
\end{pmatrix}
=
\begin{pmatrix}
1 & 0 & 0 & 0\\
0 & h_2(y,z,t) &0 & 0 \\
0 & 0 & h_3(y,z,t) & 0\\
0 & 0 & 0 & h_4(y,z,t)
\end{pmatrix} .
\]
Hence $f=\frac{1}{a}$ depends only on $x,y$, but $f=\frac{h_3}{\lambda}$ does not depend on $x$.
Hence $a=\frac{1}{f}$ and $b=\frac{h_2}{f}$ do not depend on $x$.
Hence $g$ is written as
\[
\begin{pmatrix}
a(y) & 0 & 0 & 0\\
0 & b(y) &0 & 0 \\
0 & 0 & \lambda(z,t) & 0\\
0 & 0 & 0 & \lambda(z,t)
\end{pmatrix}
\]
and $S_1$ is a surface of revolution.
\end{proof}

%
%

\section{Eigenflags vs. LCWs: the 4-dimensional case} \label{sec8}

In \cite{AFG}, the first three authors classified Weyl tensors of 4-manifolds depending on the number of existing eigenflags
(see Lemma \ref{lem:Weyl}).
Some examples in that paper showed that eigenflags may not necessarily be realized by LCWs.
It was not clear at that moment if there were examples with eigenflag directions but no LCWs at all.

In this section we provide examples of  Riemannian 4-manifolds, one with a Weyl tensor of type B, one with a Weyl tensor of type C, such that no eigenflag direction comes from a LCW. The latter case could be considered as the most difficult, since the eigenflag directions fill the union of two 2-planes in each tangent space of $M$ and we need to rule out the existence of LCWs for all of them.

\subsection{General strategy}
Both examples are constructed using left invariant metrics on 4-dimensional Lie groups. Denote by $\mathcal{B}=\{e_0,e_1,e_2,e_3\}$ a basis of left invariant vector fields. For each example, we will need to construct the structure constants of the group, that determine its algebraic structure. These are constants $c_{ij}^k$ for $0\leq i,j,k\leq 3$ such that
\[
[e_i,e_j]= \sum_k c_{ij}^k e_k, \qquad c_{ij}^k\in\mathbb{R}.
\]

The metric $g$ is then defined by asking that $\{e_0,e_1,e_2,e_3\}$ forms an orthonormal basis at every point of the group.

The curvature tensor of $M$ is computed with the formulas of \cite{Mil}; the computations could be done by hand, or be easily implemented using some mathematical software system as \texttt{SAGE} \cite{Sage}. Although \cite{Mil} only provides formulas for the curvature and Ricci tensor components in the base $\mathcal{B}$, we can compute the  Weyl tensor by calculating first the Schouten tensor
\begin{equation}\label{Sformula}
S=\frac{1}{n-2}\left( Ric-\frac{1}{2(n-1)}s g\right)
\end{equation}
and then using that

\begin{equation}\label{decomposition}
R= W + S \owedge g\end{equation}
where $\owedge$ is the well-known Kulkarni-Nomizu product of symmetric $2$-tensors  defined as

\[ (\alpha \owedge \beta)_{ijkl}=\alpha_{ik}\beta_{jl}+\alpha_{jl}\beta_{ik}-\alpha_{il}\beta_{jk}-\alpha_{jk}\beta_{il} \]
where $R$ and $W$ are understood as $(0,4)$ tensors.

Once we compute $W$, we can compute whether it has eigenflags, and the type of the tensor.

To check whether the metric has LCWs along these eigenflags, we will
 use \cite[Theorem 1.5]{AFG} that we recall here: if a vector field $e$ of eigenflags can be realized with a LCW, then its orthogonal distribution $D=e^\perp$ is integrable and umbillical:
\begin{itemize}
\item \emph{Integrable:} Define the second fundamental form of the distribution $D$ as
$$
II(X,Y) = P_{D^\perp}(\nabla_X Y)
$$
where $P_{D^\perp}$ is the projection onto $D^\perp$ and $X$, $Y$, are vector fields tangent to $D$; the distribution is integrable if and only this form is symmetric.
\item \emph{Umbilical:} there exists a vector field $H\in D^\perp$, called the \emph{mean curvature vector field of $D$}, such that for $X,Y \in D$ and $Z \in D^{\perp}$ it holds that
\begin{equation}
\label{eq:umbilic definition}
g( \nabla_X Y, Z ) =  g( X, Y )g(Z, H)
\end{equation}
Umbilicity implies integrable.
\end{itemize}

\subsection{A 4-dimensional metric with a type B Weyl tensor and no LCWs}
\label{subsec:4dim-noLCW}
The structure constants all vanish, except for
\begin{align*}
[e_0,e_1]&=-\frac{1}{2}e_2, \quad
[e_0,e_2]=-e_1,  \quad
\\
[e_1,e_2]&=e_0, \quad
[e_1,e_3]=-\frac{1}{2}e_2, \quad
[e_2,e_3]=-e_1.
\end{align*}

The Levi-Civita connection is determined by the above and the values
\begin{align*}
\nabla_{e_0}{e_0}&=0, \quad
\nabla_{e_0}{e_1}=-\frac{1}{4}e_2, \quad
\nabla_{e_0}{e_2}=\frac{1}{4}e_1, \quad
\nabla_{e_0}{e_3}=0,
\\
\nabla_{e_1}{e_1}&=0, \quad
\nabla_{e_1}{e_2}=-\frac{1}{4}e_0+\frac{3}{4}e_3,\quad
\nabla_{e_1}{e_3}=-\frac{3}{4}e_2,
\\
\nabla_{e_2}{e_2}&=0, \quad
\nabla_{e_2}{e_3}=-\frac{3}{4}e_1, \quad
\nabla_{e_3}{e_3}=0,
\end{align*}
that arise from applying the formulas in \cite{Mil}.

In the basis $\mathcal{B}$, the  components of the curvature tensor are given as
\begin{align*}
R_{ijk\ell}=g(R(e_i,e_j)e_k,e_\ell),
\end{align*}
Due to the symmetries of the curvature, we have that
\[
R_{ijk\ell}=-R_{jik\ell}=-R_{ij\ell k}=R_{k\ell ij}.
\]

After carrying the computations, we get that (except for the above index permutations), the nonzero components f the curvature tensor are
\begin{align*}
R_{0101}=-\frac{11}{16}, \quad
R_{0113}=-\frac{9}{16}, \quad
R_{0202}=\frac{1}{16}, \quad
R_{0223}=-\frac{9}{16}
\\
R_{1212}=\frac{5}{8}, \quad
R_{1313}=-\frac{3}{16}, \quad
R_{2302}=-\frac{9}{16}, \quad
R_{2323}=-\frac{15}{16}.
\end{align*}

The nonzero Ricci tensor components are
\begin{align*}
\Ric_{00}=-\frac{5}{8}, \quad
\Ric_{03}=\frac{9}{8}, \quad
\Ric_{11}=-\frac{1}{4},
\\
\Ric_{22}=-\frac{1}{4}, \quad
\Ric_{30}=\frac{9}{8}, \quad
\Ric_{33}=-\frac{9}{8}.
\end{align*}
Observe that the metric is not Einstein due to the presence of a nonzero term off the diagonal.

The scalar curvature is the trace of the Ricci tensor, resulting on
\[
s=-\frac{9}{4}.
\]

The Schouten tensor can be now computed using formula \eqref{Sformula} to get
\begin{align*}
S_{00}=-\frac{1}{8}, \quad
S_{03}=\frac{9}{16}, \quad
S_{11}=\frac{1}{16}
\\
S_{22}=\frac{1}{16}, \quad
S_{30}=\frac{9}{16}, \quad
S_{33}=-\frac{3}{8}
\end{align*}

Finally, the nonzero Weyl tensor components are
\begin{align*}
W_{0220}&= W_{1331} = W_{2002} = W_{3113}=-\frac{1}{8}, \quad
W_{0330}= W_{1221} = W_{2112} = W_{3003}=-\frac{1}{2}
\\
W_{0202}&= W_{1313} = W_{2020} = W_{3131}=\frac{1}{8}, \quad
W_{0303} = W_{1212} = W_{2121} = W_{3030}=\frac{1}{2}
\\
W_{0110}&= W_{1001} = W_{2332} = W_{3223}=\frac{5}{8}, \quad
W_{0101} = W_{1010} = W_{2323} = W_{3232}=-\frac{5}{8},
\end{align*}
where we are seeing $W$ as a $(0,4)$-tensor. To write it as an endomorphism in bivectors, we start by taking a basis of these given by
\[
\mathcal{B}'=\{\,e_0\wedge e_1 , e_0\wedge e_2 , e_0\wedge e_3 , e_1\wedge e_2 , e_1\wedge e_3 , e_2\wedge e_3  \,\},
\]
and obtain
\[
W=
\begin{pmatrix}
-\frac{5}{8} &  &  &  &  &  \\
 & \frac{1}{8} &  &  &  &  \\
 &  & \frac{1}{2} &  &  &  \\
 &  &  & \frac{1}{2} &  &  \\
 &  &  &  & \frac{1}{8} &  \\
 &  &  &  &  & -\frac{5}{8}
\end{pmatrix}
 \]

Thus $W$ diagonalizes in simple bivectors obtained from the canonical basis.
Observe also that, since there are three different eigenvalues, the Weyl tensor is of type B and not C.

The eigenflags for this metric are precisely
the elements of the canonical basis $\mathcal{B}$.
To rule out that they could correspond to LCWs, we compute the second fundamental forms of the distributions orthogonal to each one of the vector fields $e_i$:

\begin{itemize}
\item $D_0=e_0^\perp$: in the basis $e_1,e_2,e_3$ the second fundamental form is
\[
\begin{pmatrix}
0 & \frac{1}{4} & 0 \\
\frac{5}{4} & 0 & 0 \\
0 & 0 & 0
\end{pmatrix};
\]
\item $D_1=e_1^\perp$: in the basis $e_0,e_2,e_3$ the second fundamental form is
\[
\begin{pmatrix}
0 & -\frac{1}{4} & 0 \\
-\frac{5}{4} & 0 & \frac{3}{4} \\
0 & -\frac{1}{4} & 0
\end{pmatrix};
\]
\item $D_2=e_2^\perp$: in the basis $e_0,e_1,e_3$ the second fundamental form is
\[
\begin{pmatrix}
0 & \frac{1}{4} & 0 \\
-\frac{1}{4} & 0 & \frac{3}{4} \\
0 & \frac{1}{4} & 0
\end{pmatrix};
\]
\item $D_3=e_3^\perp$: in the basis $e_0,e_1,e_2$ the second fundamental form is
\[
\begin{pmatrix}
0 & 0 & 0 \\
0 & 0 & -\frac{3}{4} \\
0 & -\frac{3}{4} & 0
\end{pmatrix}.
\]
\end{itemize}

Integrability means that the second fundamental form should be symmetrical; this already rules out $e_0,e_1,e_2$ as tangent to possible LCWs.

Umbilicity implies that the second fundamental form should be a multiple of the identity; this rules out the left possibility $e_3$.

\subsection{A 4-dimensional metric that is not a product, has Weyl tensor of type C, and no LCWs}
\label{subsec:4dim_no_LCWs}
\mbox{}
\\
 Denote by $\mathcal{B}=\{e_0,e_1,e_2,e_3\}$ a basis of left invariant vector fields. The group structure arises from
\begin{align*}
[e_0,e_1]&=-e_2-e_3, \quad
[e_0,e_2]=-e_1+e_3,  \quad
[e_0,e_3]=-e_1-e_2,
\\
[e_1,e_2]&=e_0+e_3, \quad
[e_1,e_3]=e_0-3e_2, \quad
[e_2,e_3]=-e_0-3e_1.
\end{align*}

The metric is defined by asking that $\{e_0,e_1,e_2,e_3\}$ forms an orthonormal basis.

  Once the computations are carried out, the nonzero components of $W$ on the basis $\mathcal{B}$
are (modulo trivial rearrangements of the indices)
\[
W_{0101}=W_{2323}=-8;
W_{0202}=W_{0303}=W_{1212}=W_{1313}=4.
\]
This means that the planes $\pi_1=e_0\oplus e_1$ and $\pi_2=e_2\oplus e_3$ are entirely formed by eigenflags, thus giving a Weyl tensor of type C.

To rule out the existence of LCWs, we will use \cite[Lemma 15]{AFG}. Suppose that some linear combination (where $\alpha:M\to\R$ is some function)
\[
X=\cos(\alpha)e_0+\sin(\alpha)e_1
\]
was a LCW. Then its orthogonal distribution
(that is spanned by $Y=-\sin(\alpha)e_0+\cos(\alpha)e_1$, $e_2$ and $e_3$) would have to be umbilical, i.e, integrable and such that the second fundamental form $B$ of its integral leaves is a multiple of the identity. But
\[
B(e_3, Y)=g(\nabla_{e_3} Y, X)=\frac{1}{2},
\]
instead of vanishing.

An entirely similar argument shows that there are also no LCWs tangent to the planes spanned by $e_2$ and $e_3$.

To rule out the possibility that the metric were a Riemannian product, we
start by observing that there can not be one dimensional factors, since these are equivalent to LCWs, thus we only need to consider product of surfaces $S_1\times S_2$.

But in that case, the structure of the Weyl tensor implies that the 2-dimensional factors should be tangent at each point to the planes $\pi_1=e_0\oplus e_1$ and $\pi_2=e_2\oplus e_3$. Thus, it is enough to check that such distributions are not umbillical. A simple calculation shows that
\[
g(\nabla_{e_0}e_1, e_2)=\frac{1}{4}, \qquad
g(\nabla_{e_1}e_0, e_2)=-\frac{1}{4},
\]
thus proving that the second fundamental form of the distribution $e_0\oplus e_1$ is not symmetrical.

\end{document}